\renewcommand*{\eqref}[1]{%
\hyperref[{#1}]{\textup{\tagform@{\!\!\ref*{#1}}}}%
}\makeatother %add when putting this file to arXiv
\theoremstyle{plain}
\newtheorem{theorem}{Theorem}[section]
\newtheorem{lemma}[theorem]{Lemma}
\newtheorem{corollary}[theorem]{Corollary}
\theoremstyle{definition}
\newtheorem{remark}[theorem]{Remark}
\newtheorem{example}[theorem]{Example}
\newtheorem{assumption}{Assumption}
\newcommand{\bignorm}[1]{{\left\|#1\right\|}}
\newcommand{\norm}[1]{{\|#1\|}}
\def\supp{\mathop{\mathrm{supp}}\nolimits}
\def\Id{\mathop{\mathrm{Id}}\nolimits}
\def\Re{\mathop{\mathrm{Re}}\nolimits}
\def\Im{\mathop{\mathrm{Im}}\nolimits}
\def\loc{\mathop{\mathrm{loc}}\nolimits}
\def\R{{\mathbb{R}}}
\def\Z{{\mathbb{Z}}}
\def\N{{\mathbb{N}}}
\def\C{{\mathbb{C}}}
\def\S{{\mathcal{S}}}
\def\F{{\mathcal{F}}}
\def\H{{\mathcal{H}}}
\def\X{{\mathcal{X}}}
\def\Y{{\mathcal{Y}}}
\def\<{{\langle}}
\def\>{{\rangle}}
\def\ep{{\varepsilon}}
\def\ds{\displaystyle}
\title[Fractional operators with sharp Hardy potentials]{Kato smoothing, Strichartz and uniform Sobolev estimates for fractional operators with sharp Hardy potentials}
\author{Haruya Mizutani}
\address[H. Mizutani]{Department of Mathematics, Graduate School of Science, Osaka University, Toyonaka, Osaka 560-0043, Japan}
\email{haruya@math.sci.osaka-u.ac.jp}
\author{Xiaohua Yao}
\address[X. Yao]{Department of Mathematics and Hubei Province Key Laboratory of Mathematical Physics, Central China Normal University, Wuhan, 430079, P.R. China}
\email{yaoxiaohua@mail.ccnu.edu.cn}
\begin{document}
%\date{\today}

\begin{abstract}
Let $0<\sigma<n/2$ and $H=(-\Delta)^\sigma +V(x)$ be Schr\"odinger type operators on $\R^n$ with a class of scaling-critical potentials $V(x)$, which include the Hardy potential $a|x|^{-2\sigma}$ with a sharp coupling constant $a\ge -C_{\sigma,n}$ ($C_{\sigma,n}$ is the best constant of Hardy's inequality of order $\sigma$). In the present paper we consider several  sharp global estimates for the resolvent and the solution to the time-dependent Schr\"odinger equation associated with $H$. In the case of the subcritical coupling constant $a>-C_{\sigma,n}$, we first prove {\it uniform resolvent estimates} of Kato--Yajima type for all $0<\sigma<n/2$, which turn out to be equivalent to {\it Kato smoothing estimates} for the Cauchy problem. We then establish {\it Strichartz estimates} for $\sigma>1/2$ and {\it uniform Sobolev estimates} of Kenig--Ruiz--Sogge type for $\sigma\ge n/(n+1)$. These extend the same properties for the Schr\"odinger operator with the inverse-square potential to the higher-order and fractional cases. Moreover, we also obtain {\it improved Strichartz estimates with a gain of regularities} for general initial data if $1<\sigma<n/2$ and for radially symmetric data if $n/(2n-1)<\sigma\le1$, which extends the corresponding results for the free evolution to the case with Hardy potentials. These arguments can be further applied to a large class of  higher-order inhomogeneous elliptic operators and even to certain long-range metric perturbations of the Laplace operator. Finally, in the critical coupling constant case ({\it i.e.}, $a=-C_{\sigma,n}$), we show that the same results as in the subcritical case still hold for functions orthogonal to radial functions.
\end{abstract}

\maketitle

\footnotetext{\textit{Key words and phrases}. Fractional Schr\"odinger operators, Hardy potentials, Kato smoothing estimates, Strichartz estimates, uniform Sobolev estimates}

\section{Introduction}
\subsection{Background and problems}
\label{subsection_1_1}
In the present paper we mainly study generalized Schr\"odinger operators $H=(-\Delta)^\sigma+a|x|^{-2\sigma}$ on $\R^n$ for $ 0<\sigma<n/2$ and $a\in \R$ satisfying
\begin{align}
\label{best}
 a\ge-C_{\sigma,n}:=-\left\{\frac{2^\sigma\Gamma\left(\frac{n+2\sigma}{4}\right)}{\Gamma\left(\frac{n-2\sigma}{4}\right)}\right\}^2.
\end{align}
It is well known (see \cite[Theorem 2.5]{Her}) that $C_{\sigma,n}$ is the best constant in the following (generalized) Hardy inequality:
\begin{align}
\label{Hardy}
C_{\sigma,n}\int |x|^{-2\sigma}|u(x)|^2dx\le \int ||D|^\sigma u(x)|^2dx,\quad u\in C_0^\infty (\R^n),
\end{align}
where $|D|=(-\Delta)^{1/2}$. Note that \eqref{Hardy} holds if and only if $0<\sigma<n/2$ and that  $C_{\sigma,n}$ can be computed more explicitly in several cases (see e.g. \cite[Corollary 14]{DaHi}), e.g. $C_{1,n}=[(n-2)/2]^2$, $C_{2,n}=[n(n-4)/4]^2$ and $C_{1/2,3}=2/\pi$ are the sharp constants in {\it classical Hardy's, Rellich's and Kato's inequalities}, respectively. It follows from Hardy's inequality \eqref{Hardy} that the higher-order ($\sigma\ge1$) and fractional ($\sigma<1$) Schr\"odinger operators  $H=(-\Delta)^\sigma+a|x|^{-2\sigma}$ can be realized as self-adjoint operators on $L^2(\R^n)$ (see Subsection \ref{main result} for the precise definition).

The operator $H$, or more generally, the operator $H_{\Phi,\sigma}$ of the form
$$
H_{\Phi,\sigma}=(-\Delta)^\sigma+\Phi(x/|x|)|x|^{-2\sigma},\quad 0<\sigma<n/2,
$$
with $\Phi\in L^\infty(\mathbb S^{n-1})$ satisfying $\inf \Phi>-C_{\sigma,n}$, arise naturally in mathematical physics. If $\sigma=1/2$, then $H=|D|+a|x|^{-1}$ is the massless semi-relativistic Schr\"odinger operator with the Coulomb potential describing (massless) relativistic particles in the Coulomb field. If $\sigma=1$ and $\Phi(\theta)=\vec  a\cdot x/|x|$ with $\vec a\in \R^3$, then $H_{\Phi,1}$ is the Hamiltonian describing electrons in the electric point-like dipole field \cite{Levy}. Moreover, in the fractional case $\sigma\in (0,1)$, the operator $(-\Delta)^{\sigma}+V(x)$ is the Hamiltonian for the fractional quantum mechanics introduced by \cite{Laskin} and $H_{\Phi,\sigma}$ with $\sigma\in (0,1)$ can be regarded as a fractional counterpart of $H_{\Phi,1}$. In the higher-order case $\sigma>1$, the operators $H$ and $H_{\Phi,\sigma}$ also appear in the study of several higher-order dispersive partial differential equations (see e.g. \cite{Carles}).

Since $H$ is self-adjoint on $L^2(\R^n)$, the resolvent $(H-z)^{-1}$ for $\Im z\neq0$ and the unitary group $e^{-itH}$ can be well-defined  on $L^2(\R^n)$, which are related with the solutions to
%\eqref{stationary} and \eqref{Cauchy}follows: for given data $f(x),\psi_0(x)$ and $F(t,x)$,
%\begin{equation}
%\begin{aligned}
%u=(H-z)^{-1}f,\quad
%\label{Duhamel}
%\psi=e^{-itH}\psi_0-i\int_0^te^{-i(t-s)H}F(s)ds.
%\end{aligned}
%\end{equation}
the following stationary and time-dependent Schr\"odinger  equations  with Hardy potentials:
\begin{align}
\label{stationary}
\big(H-z\big)u(x)&=f(x),\ x\in \R^n;
\end{align}
\vskip-0.8cm
\begin{align}
\label{Cauchy}
\big(i\partial_t-H\big)\psi(t,x)&=F(t,x),\ \psi(0,x)=\psi_0(x),\ (t,x)\in \R^{1+n}.
\end{align}
For the fractional Laplacian $H=(-\Delta)^{\sigma}$ and the second-order operator $H=-\Delta +a|x|^{-2}$, the equations \eqref{stationary} and \eqref{Cauchy} have been extensively studied in several directions such as the spectral and scattering theory, and applications to the nonlinear Schr\"odinger (NLS for short) equations associated with $H$ (see below for more details).  The main purpose of the present paper is to prove several kinds of interesting global dispersive properties for the solutions to  \eqref{stationary} and \eqref{Cauchy} associated with $H=(-\Delta)^\sigma+a|x|^{-2\sigma}$ for the general cases $a\neq0$ and $\sigma\neq1$, which particularly gives a unified approach to  many of previous works for the cases $a=0$ or $\sigma=1$. Moreover, we also consider a more general class of dispersive operators $P_0(D)+V(x)$ than the operator $(-\Delta)^\sigma+a|x|^{-2\sigma}$, which particularly covers the operator $H_{\Phi,\sigma}$ discussed above (see Assumption \ref{assumption_A} and Section \ref{section_5} below)
\vskip0.3cm
Let us discuss the purpose of the paper and  the previous literature in more details.
The main objects include the following sharp dispersive estimates:
\vskip0.2cm
\begin{itemize}
\item {\it Uniform resolvent estimates:}
\begin{align}
\label{theorem_1_2'}
&\sup_{z\in \C\setminus\R}\norm{|x|^{-\sigma+\gamma}|D|^\gamma(H-z)^{-1}|D|^\gamma |x|^{-\sigma+\gamma}}_{L^2\to L^2}<\infty
\end{align}
 for any $n\ge2$, $0<\sigma<n/2$ and $\sigma-n/2< \gamma<\sigma-1/2$. In other words, $|x|^{-\sigma+\gamma}|D|^\gamma$ is $H$-supersmooth in the sense of Kato--Yajima \cite{KaYa}.
\vskip0.2cm
\item {\it Kato smoothing estimates:}
\begin{align}
\label{theorem_2_2'}
\norm{|x|^{-\sigma+\gamma}|D|^\gamma e^{-itH}\psi_0}_{L^2_tL^2_x}\lesssim \norm{\psi_0}_{L^2_x}\end{align}
for any $n\ge2$, $0<\sigma<n/2$ and $\sigma-n/2< \gamma<\sigma-1/2$.
\vskip0.2cm
\item {\it (Standard) Strichartz estimates:}
\begin{align}
\label{lemma_section_3_1_1'}
\norm{e^{-itH}\psi_0}_{L^p_tL^{q}_x}\lesssim \norm{\psi_0}_{L^2_x}%\quad\bignorm{\int_0^te^{-i(t-s)H}F(s)ds}_{L^p_tL^{q}_x}&\lesssim \norm{F}_{L^{\tilde p'}_tL^{\tilde q'}_x}.
\end{align}
for all $\psi_0\in L^2$ if $1\le\sigma<n/2$ and for any radially symmetric $\psi_0\in L^2$ if $n/(2n-1)<\sigma<1$, where $n\ge3$ and $(p,q)$ is ${n}/{(2\sigma)}$-admissible (see \eqref{admissible} for the definition of admissible pairs).
\vskip0.2cm
\item {\it Strichartz estimates with a gain or loss of regularities:}
\begin{equation}
\begin{aligned}
\label{lemma_section_3_1_3'}
\norm{|D|^{2(\sigma-1)/p}e^{-itH}\psi_0}_{L^p_tL^{q}_x}&\lesssim \norm{\psi_0}_{L^2_x}%\\\bignorm{|D|^{2(\sigma-1)/p}\int_0^te^{-i(t-s)H}F(s)ds}_{L^p_tL^{q}_x}&\lesssim \norm{|D|^{2(1-\sigma)/\tilde p}F}_{L^{\tilde p'}_tL^{\tilde q'}_x}.
\end{aligned}
\end{equation}
for all $n\ge2$, $1/2<\sigma<n/2$ and ${n}/{2}$-admissible pairs $(p,q)$.
\vskip0.3cm
\item  {\it $L^p-L^{q}$ resolvent estimates:}
\begin{align}
\label{theorem_5_1'}
\big\|(H-z)^{-1}\big\|_{L^{p}-L^{p'}}\lesssim |z|^{\frac{n}{\sigma}\big(\frac
{1}{p}-\frac{1}{2}\big)-1},\quad z\in \C\setminus\{0\},
\end{align}
for $n\ge3$, $n/(n+1)\le \sigma<n/2$ and $2n/(n+2\sigma)\le p\le 2(n+1)/(n+3)$. We remark that the inequalities \eqref{theorem_5_1'} are called {\it uniform Sobolev estimates}  in the sense of Kenig--Ruiz--Sogge \cite{KRS}, and the ranges of $\sigma$ and $p$ are optimal.
\end{itemize}
\vskip0.2cm
We also study retarded estimates for the inhomogeneous evolution $\int_0^te^{-i(t-s)H}F(s)ds$ related to \eqref{theorem_2_2'}--\eqref{lemma_section_3_1_3'} which are of particular interest for applications to NLS equations associated with $H$.
Among these estimates, the estimates \eqref{lemma_section_3_1_3'} have an additional smoothing effect (a gain of regularities) for the higher-order case $\sigma>1$ or loss of regularities for the fractional case $\sigma<1$ compared with the second order case $\sigma=1$. This property reflects the stronger or weaker dispersive effect in the high frequency mode of dispersive equations of order $2\sigma\neq1$.
We also remark that the operators  $H=(-\Delta)^\sigma+a|x|^{-2\sigma}$ with $a>-C_{\sigma,n}$ are critical in several senses concerning the validity of these estimates (see Remark \ref{remark_theorems} below for more details).
\vskip0.2cm

These global estimates \eqref{theorem_1_2'}--\eqref{theorem_5_1'} have been extensively studied in many works for the free case $H=(-\Delta)^{\sigma}$ with any $\sigma>0$ and the second-order case $H=-\Delta+a|x|^{-2}$. In the case of the free fractional Laplacian $H=(-\Delta)^{\sigma}$, we refer to \cite{KaYa}, \cite{Wat}, \cite{Sug} and \cite{RuSu} for the uniform resolvent estimate \eqref{theorem_1_2'} and the smoothing estimate \eqref{theorem_2_2'}, to \cite{Str}, \cite{GiVe}, \cite{Yaj}, \cite{KPV}, \cite{KeTa}, \cite{Pau}, \cite{Guo} and \cite{GLNY} for the Strichartz estimates \eqref{lemma_section_3_1_1'} and \eqref{lemma_section_3_1_3'}, and to \cite{KRS}, \cite{Gut}, \cite{HYZ}, \cite{SYY} and \cite{Cue} for the uniform Sobolev estimate \eqref{theorem_5_1'}, respectively. In the second-order case  $H=-\Delta+a|x|^{-2}$ with $a>-C_{1,n}$, two estimates \eqref{lemma_section_3_1_1'} and \eqref{lemma_section_3_1_3'} (which are the same in this case), as well as \eqref{theorem_1_2'} and \eqref{theorem_2_2'},  were proved in seminal works by Burq et al \cite{BPST1,BPST2}, while the double endpoint Strichartz estimate for the inhomogeneous evolution $\int_0^te^{-i(t-s)H}F(s)ds$ and the uniform Sobolev estimate were obtained by \cite{BoMi} and \cite{Mizutani_JST}, respectively. For the critical constant case $a=-C_{1,n}$, we refer to \cite{Mizutani_JDE}, \cite{Mizutani_JST}.
\vskip0.2cm
It is well known that all of the estimates \eqref{theorem_1_2'}--\eqref{theorem_5_1'} (for the free or second-order cases) have played important roles in the study of broad areas, especially the spectral and scattering theory. In particular, Strichartz estimates \eqref{lemma_section_3_1_1'} and \eqref{lemma_section_3_1_3'} are one of fundamental tools for NLS equations (see e.g. \cite{Tao}). We also refer to \cite{ZZ}, \cite{KMVZZ} and references therein for a recent development on NLS equations with Hardy potentials. The uniform Sobolev estimate \eqref{theorem_5_1'} was originally used to proving unique continuation properties for the operator $-\Delta+V(x)$ with rough potentials $V\in L^{n/2}$. More recently, it has played a crucial role in studying Keller--Lieb--Thirring type eigenvalue bounds for Schr\"odinger operators with complex valued potentials (see \cite{Fra1}, \cite{Fra2}, \cite{Cue} and reference therein, also \cite{Mizutani_JST} for the case with Hardy potentials). For further applications of uniform resolvent and Sobolev estimates, we refer to \cite{Gut}, \cite{HYZ}, \cite{Mizutani_APDE} (see also the discussion after Theorem \ref{theorem_5} below).
\vskip0.2cm
Besides the free or second-order cases, we are mainly devoted to establish these estimates \eqref{theorem_1_2'}--\eqref{theorem_5_1'} for $H=(-\Delta)^\sigma+a|x|^{-2\sigma}$ with $0<\sigma<n/2$ and $a>-C_{\sigma,n}$. We also consider the critical case $a=-C_{\sigma,n}$ and prove some partial results. This naturally extends the known literatures for the operators $(-\Delta)^\sigma$ and $-\Delta+a|x|^{-2}$ describe above, to the operator $(-\Delta)^\sigma+a|x|^{-2\sigma}$. To our best knowledge, there is no previous literature on these estimates \eqref{theorem_1_2'}--\eqref{theorem_5_1'} for higher-order or fractional Schr\"odinger operators with large potentials $V(x)$ which has the critical decay rate, {\it i.e}, $V(x)=O(\<x\>^{-2\sigma})$.
Moreover, our model is more general than the operator $(-\Delta)^\sigma+a|x|^{-2\sigma}$ in the following sense. On one hand, we consider not only the Hardy potential $a|x|^{-2\sigma}$ but also a wide class of repulsive potentials $V(x)$, even including some examples satisfying $|x|^{2\sigma}V\notin L^\infty$ (see Assumption \ref{assumption_A} and Examples \ref{example_1} and \ref{example_2} below). On the other hand, our method can be applied to not only $(-\Delta)^\sigma$ but also a wide class of dispersive operators $P_0(D)$ (as the principal part of $H$). For instance, our class of $P_0(D)$ particularly includes the massive fractional operator $(-\Delta+m)^\sigma$ with $m>0$, the higher-order inhomogeneous elliptic operators of the form $\sum_{j=1}^J(-\Delta)^j$ and the Laplace-Beltrami operator of the form $-\nabla\cdot G_0(x)\nabla$ with  $G_0$ being a small long-range perturbation of the identity matrix (see Section \ref{section_5} for more details).
\vskip0.2cm
Among these desired estimates, the uniform resolvent estimate \eqref{theorem_1_2'} is fundamental and play a central role in proving the other estimates \eqref{theorem_2_2'}--\eqref{theorem_5_1'}. For the case $\sigma=1$, \eqref{theorem_1_2'} was obtained by \cite{BPST1} via the spherical harmonics decomposition and analysis of Hankel operators. \cite{BPST2} provided an alternative proof of \eqref{theorem_1_2'} (when $\sigma=1$) based on the method of multipliers. However, if $\sigma\notin \N$, it seems to be difficult to apply these methods due to the non-locality of $(-\Delta)^\sigma$. Also, even in the case $\sigma\in \N$, these methods will involve much longer and complicated computations compared with the case $\sigma=1$. To overcome these difficulties, we establish a different method, which is based on Mourre's theory \cite{Mou} and generalizes a previous argument by Hoshiro \cite{Hos}. {\it This method enables us to deal with general cases $0<\sigma<n/2$ in a unified way}.
\vskip0.2cm
Finally, it is worth noting that there are many interesting works on uniform resolvent, dispersive and Strichartz estimates for higher-order and fractional Schr\"odinger operators or Dirac operators, involving potentials which decay faster than $|x|^{-2\sigma}$ (see \cite{FSWY}, \cite{FSY}, \cite{GT}, \cite{EGG}, our subsequent work \cite{MiYa2} and references therein), where in particular, an amount of background analysis and related decay estimates about these operators can be found.
\vskip0.3cm
\subsection{Notations}
\label{subsection_notation}
To state our main results, we will use the following notations.
\begin{itemize}
\item For positive constants or operators $A,B$, $A\lesssim B$ (resp. $A\gtrsim B$) means $A\le cB$  (resp. $A\ge cB$) with some constant $c>0$. $A\sim B$ means $cB\le A\le c'B$ with some $c'>c>0$.
\vskip0.1cm
\item $\<\cdot\>$ stands for $\sqrt{1+|\cdot|^2}$.
\vskip0.1cm
\item $\mathbb B(X,Y)$ denotes the family of bounded operators from $X$ to $Y$, $\mathbb B(X)=\mathbb B(X,X)$ and $\norm{\cdot}_{X\to Y}:=\norm{\cdot}_{\mathbb B(X,Y)}$. We also set $\norm{f}:=\norm{f}_{L^2}$ and $\norm{A}:=\norm{A}_{L^2\to L^2}$.
\vskip0.1cm
\item $\H^s(\R^n)$ denotes the $L^2$-based Sobolev space. $L^{p,q}(\R^n)$ denotes the Lorentz space (see Appendix \ref{appendix_misc} for basic properties of Lorentz spaces). $\<f,g\>$ stands for the inner product  in $L^2$, as well as the duality couplings $\<\cdot,\cdot\>_{L^{p',q'},L^{p,q}}$ and $\<\cdot,\cdot\>_{\H^{-\sigma},\H^\sigma}$, where $p':=p/(p-1)$ is the H\"older conjugate exponent of $p$.
\vskip0.1cm
\item Given a Banach space $X$, $L^p_tX:=L^p(\R;X)$ denotes the Bochner space. In particular, $L^p_tL^q_x:=L^p(\R;L^q(\R^n))$. Let $L^2_\omega=L^2(\mathbb S^{n-1},d\omega)$ with the standard measure associated to the round metric and $\mathcal L^{p}_r=L^{p}(\R^+,r^{n-1}dr)$. Define the space $\mathcal L^{p}_rL^2_\omega$ by the following norm
$$
\norm{f}_{\mathcal L^{p}_rL^2_\omega}=\|\|f(r\omega)\|_{L^2_\omega}\|_{\mathcal L^{p}_r},\quad  r>0,\ \omega\in \mathbb S^{n-1}.
$$
Let $B[\mathcal L^{p}_rL^2_\omega]$ denote a Besov-type space defined by the norm
\begin{align}
\label{Besov}
\norm{f}_{B[\mathcal L^{p}_rL^2_\omega]}:=\Big(\sum_{j\in \Z}\norm{\varphi_j(D)f}_{\mathcal L^{p}_rL^2_\omega}^2\Big)^{1/2},
\end{align}
where $\{\varphi_j\}_{j\in \Z}$ with $\varphi_j(\xi)=\varphi(2^{-j}\xi)$ is the homogeneous dyadic partition of unity, namely $\varphi\in C_0^\infty(\R^n)$, $\varphi$ is radially symmetric and even, $0\le \varphi\le1$, $\varphi(\xi)=1$ for $1/2\le|\xi|\le2$, $\varphi(\xi)=0$ if $|\xi|<1/4$ or $4<|\xi|$ and
$
\sum_{j\in \Z}\varphi_j(\xi)=1$ for all $\xi\neq0$.
\end{itemize}
\vskip0.3cm
\subsection{Main results} \label{main result}
Let $0<\sigma<n/2$. In this subsection we state our main results for the following Schr\"odinger-type operator $H$ of order $2\sigma$ on $L^2(\R^n)$:
\begin{align}
\label{H}
H_0=(-\Delta)^\sigma
,\quad H=H_0+V(x),
\end{align}
where $\Delta=\sum_{j=1}^n\partial_{x_j}^2$. Note that we will consider a more general class of Fourier multipliers $P_0(D)$ as the main term $H_0$ in Section \ref{section_5}. To state the assumption on $V$, we set
\begin{equation}
\begin{aligned}
\label{symbol_ell}
H_\ell=(2\sigma)^\ell (-\Delta)^\sigma,\quad V_\ell(x)&=(-x\cdot\nabla_x)^\ell V(x),\quad \ell=1,2.
\end{aligned}
\end{equation}
If $P_0(\xi)=|\xi|^{2\sigma}$ denotes the symbol $(-\Delta)^\sigma$, then $H_\ell$ is written in the form
$$
H_\ell=P_\ell(D)=\F^{-1}P_\ell \F,\quad P_\ell(\xi)=(\xi\cdot\nabla_\xi)^\ell P_0(\xi),
$$
where $\F$ is the Fourier transform. We impose the following assumption on $V$.
%assumption
\begin{assumption}
\label{assumption_A} $V$ is a real-valued function on $\R^n$ such that, for all $\ell=0,1,2$, $(x\cdot\nabla)^\ell V\in L^1_{\mathrm{loc}}(\R^n)$ and $(x\cdot\nabla)^\ell V$ is $H_0$-form bounded, namely $|(x\cdot\nabla)^\ell V|^{1/2}(H_0+1)^{-1/2}$ is bounded on $L^2(\R^n)$. Moreover, the following estimates hold for all $u\in C_0^\infty(\R^n)$:
\begin{align}
\label{assumption_A_1}
\<(H_0+V)u,u\>&\gtrsim \<(-\Delta)^\sigma u,u\>,\\
\label{assumption_A_2}
\<(H_1+V_1)u,u\>&\gtrsim \<(-\Delta)^\sigma u,u\>,\\
\label{assumption_A_3}
|\<(H_2+V_2)u,u\>|&\lesssim\<(H_1+V_1)u,u\>.
\end{align}
\end{assumption}
A typical example of $V$ is the Hardy potential $a|x|^{-2\sigma}$ with $a>-C_{\sigma,n}$ (see Example \ref{example_2}). Note that the critical case $a=-C_{\sigma,n}$ will be also considered later (see Section \ref{section_6}).
\vskip0.2cm
Here and in the sequel, we frequently use the following norm equivalence
$$
\<H_0u,u\>+\|u\|^2\sim \|u\|_{\H^\sigma}^2.
$$
It follows from this equivalence and Assumption \ref{assumption_A} that the sesquilinear form $$Q_H(u,v):=\<H_0u,v\>+\<Vu,v\>=\int(H_0u\cdot\overline{ v}+Vu\overline v)dx$$ is well-defined for $u,v\in C_0^\infty(\R^n)$ and satisfies
\begin{align}
\label{assumption_A_4}
\||D|^\sigma u\|^2\lesssim Q_H(u,u)\lesssim \|u\|_{\H^\sigma}^2.
\end{align}
In particular, $Q_H$ is closable. We denote by the same symbol $Q_H$ its closed extension through the graph norm $({Q_H(u,u)+\|u\|^2})^{1/2}$. Precisely speaking,  $H$ is defined as a unique self-adjoint operator generated by $Q_H$, that is the Friedrichs extension of $H_0+V$ defined on $C_0^\infty(\R^n)$. By virtue of \eqref{assumption_A_4}, the form domain $D(H^{1/2})$ coincides with $\H^\sigma(\R^n)$. Note that the spectrum of $H$ coincides with $[0,\infty)$ and is purely absolutely continuous (see Remark \ref{remark_theorem_1_1} below).
\vskip0.3cm
Here we give some sufficient conditions and examples to ensure Assumption \ref{assumption_A}.

\begin{example}
\label{example_1}
Thanks to O'Neil's inequality \eqref{Holder} and Sobolev's inequality \eqref{Sobolev}, it is enough to assume $(x\cdot\nabla)^\ell V\in L^{\frac{n}{2\sigma},\infty}(\R^n)+L^\infty(\R^n)$ for all $\ell=0,1,2$ to ensure $(x\cdot\nabla)^\ell V\in L^1_{\loc}(\R^n)$ and $H_0$-form boundedness. Moreover, if there exists $0<\delta<1$ such that
\begin{align}
\label{example_1_1}
-\int V|u|^2dx&\le (1-\delta)\int ||D|^{\sigma } u|^2dx,\\
\label{example_1_2}
-\int V_1|u|^2dx&\le (2\sigma-\delta)\int ||D|^{\sigma} u|^2dx,\\
\label{example_1_3}
\int|2\sigma V_1-V_2||u|^2dx&\lesssim\int (||D|^{\sigma} u|^2+V_1|u|^2)dx
\end{align}
for all $u\in C_0^\infty(\R^n)$, then \eqref{assumption_A_1}--\eqref{assumption_A_3} are satisfied (see Appendix \ref{appendix_example} below).
\end{example}

\begin{example}
\label{example_2}
The most important example is the Hardy potential
\begin{align}
\label{Hardy_potential}
V(x)=a|x|^{-2\sigma},\quad a>-C_{\sigma,n},%:=-\left\{\frac{2^\sigma\Gamma\left(\frac{n+2\sigma}{4}\right)}{\Gamma\left(\frac{n-2\sigma}{4}\right)}\right\}^2,
\end{align}
where $C_{\sigma,n}$ is given by \eqref{best}. One can also consider its generalization $$V(x)=\Phi(x/|x|)|x|^{-2\sigma},\quad \Phi\in L^\infty(\mathbb S^{n-1}),\quad \inf \Phi>-C_{\sigma,n}.$$
In these cases, Assumption \ref{assumption_A} follows easily from \eqref{Hardy} and Example \ref{example_1} since $H_\ell=(2\sigma)^\ell H$ and $V_\ell=(2\sigma)^\ell V$. $V(x)=a\<x\>^{-2\sigma}$ also satisfies Assumption \ref{assumption_A} if $a>-C_{\sigma,n}$. Moreover, it is also worth noting that Assumption A does not require any specific decay rate of $V$. In fact, Assumption \ref{assumption_A} allows several potentials decaying slower than $|x|^{-2\sigma}$. For instance, $a|x|^{-\mu}$ and $a\<x\>^{-\mu}$ fulfill the conditions in Example \ref{example_1} and hence Assumption \ref{assumption_A} if $\mu\in (0,2\sigma)$ and $a>0$.
\end{example}

Now we state the main results. The first result is a uniform resolvent estimate of Kato--Yajima type which will play a central role for other results in the paper.

%theorem
\begin{theorem}	
\label{theorem_1}
If $n\ge2$, $0<\sigma<n/2$ and $H=(-\Delta)^\sigma+V$ satisfies Assumption \ref{assumption_A}, then \begin{align}
\label{theorem_1_1}
\sup_{z\in\C\setminus[0,\infty)}\norm{|x|^{-\sigma+\gamma}|D|^{\gamma}(H-z)^{-1}|D|^{\gamma}|x|^{-\sigma+\gamma}}_{L^2\to L^2}<\infty
\end{align}
for all $\sigma-n/2< \gamma<\sigma-1/2$. In other words, $|x|^{-\sigma+\gamma}|D|^{\gamma}$ is $H$-supersmooth in the sense of Kato--Yajima \cite{KaYa}. In particular, if $1/2<\sigma<n/2$ then the following estimate holds:
\begin{align}
\label{theorem_1_2}
\sup_{z\in \C\setminus[0,\infty)}\norm{|x|^{-\sigma}(H-z)^{-1}|x|^{-\sigma}}_{L^2\to L^2}<\infty.
\end{align}
\end{theorem}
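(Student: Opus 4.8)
The plan is to prove \eqref{theorem_1_1}---which is exactly the assertion that $B:=|x|^{-\sigma+\gamma}|D|^{\gamma}$ is $H$-supersmooth, i.e.\ $\sup_{z\notin[0,\infty)}\|B(H-z)^{-1}B^*\|<\infty$---by combining Kato's theory of supersmooth operators with a global version of Mourre's commutator method \cite{Mou}, the conjugate operator being the generator of dilations $A=\tfrac12(x\cdot D+D\cdot x)$, $D=-i\nabla$. The first step is to read Assumption \ref{assumption_A} in commutator language. Since $e^{itA}$ is the unitary dilation group on $L^2(\R^n)$, one has on $C_0^\infty(\R^n)$ the identities $i[H_0,A]=2\sigma H_0=H_1$, $i[V,A]=-(x\cdot\nabla)V=V_1$, hence $i[H,A]=H_1+V_1$ and $i\big[i[H,A],A\big]=H_2+V_2$. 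Thus \eqref{assumption_A_1} says $H\gtrsim H_0\ge0$ (so in particular $\sigma(H)=\sigma_{\mathrm{ac}}(H)=[0,\infty)$), \eqref{assumption_A_2} is a \emph{strict Mourre estimate $i[H,A]\gtrsim H_0$, valid on the whole spectrum and with no compact remainder}, and \eqref{assumption_A_3} is the $C^2(A)$-type bound $|\langle i[i[H,A],A]u,u\rangle|\lesssim\langle i[H,A]u,u\rangle$ needed to justify the commutator calculus rigorously. I would carry this translation out at the level of quadratic forms, keeping track of the fact that $(x\cdot\nabla)^\ell V$ is only $H_0$-form bounded rather than bounded.

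Write $\beta:=\sigma-\gamma$, so that $B=|x|^{-\beta}|D|^{\sigma-\beta}$ and the hypothesis $\sigma-n/2<\gamma<\sigma-1/2$ becomes $1/2<\beta<n/2$. For $z$ bounded away from $[0,\infty)$ the estimate is elementary from the mapping properties of $B$, $B^*$ and $(H-z)^{-1}$; the real content is the limiting absorption bound for $z$ close to the spectrum, which by globality of the Mourre estimate one should obtain with a single constant, uniform over all energies in $[0,\infty)$ (the only threshold, $0$, being taken care of by $H\gtrsim H_0$; in the homogeneous model $V=a|x|^{-2\sigma}$ one may alternatively rescale first to $|z|=1$). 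Setting $u:=(H-z)^{-1}B^*g$ and using the elementary identity $|\mathrm{Im}(z)|\,\|u\|^2=|\mathrm{Im}\langle g,Bu\rangle|\le\|g\|\,\|Bu\|$, it suffices to prove $\|Bu\|^2\lesssim|\mathrm{Im}(z)|\,\|u\|^2+\|g\|^2$, since combining the two and absorbing gives $\|Bu\|\lesssim\|g\|$, i.e.\ $\|B(H-z)^{-1}B^*\|\lesssim1$. By the virial identity $\langle i[H,A]u,u\rangle=2\,\mathrm{Im}(z)\langle Au,u\rangle+2\,\mathrm{Im}\langle g,BAu\rangle$, together with the form bound $B^*B=|D|^{\sigma-\beta}|x|^{-2\beta}|D|^{\sigma-\beta}\lesssim|D|^{2\sigma}=H_0\lesssim i[H,A]$ (which uses the Hardy inequality $\||x|^{-\beta}f\|\lesssim\||D|^\beta f\|$, valid precisely for $0<\beta<n/2$, the case $\beta=\sigma$ being \eqref{Hardy}) and hence $\|Bu\|^2\lesssim\langle i[H,A]u,u\rangle$, the whole matter comes down to estimating the two boundary terms $\mathrm{Im}(z)\langle Au,u\rangle$ and $\mathrm{Im}\langle g,BAu\rangle$ by $|\mathrm{Im}(z)|\,\|u\|^2+\|g\|^2$ plus a small multiple of $\|Bu\|^2$. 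This is where the remaining condition $\beta>1/2$ enters, playing the role of the familiar $s>1/2$ threshold in Mourre's principle.

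The heart of the proof is this last step, which I would carry out by a Mourre-type differential inequality, following and adapting the argument of Hoshiro \cite{Hos} for Fourier multipliers: introduce a regularisation parameter $\delta>0$ (for instance working with $z=\lambda+i\delta$, or replacing $A$ by the bounded operator $A(1+i\delta A)^{-1}$), form an appropriate regularised quantity $F(\delta)$, derive for $F$ a differential inequality of Mourre type---with the strict Mourre estimate controlling the leading term, the $C^2(A)$ bound \eqref{assumption_A_3} controlling the error terms, and a right-hand side that is integrable near $\delta=0$ precisely because $\beta>1/2$---and then conclude $\sup_{\delta>0}F(\delta)<\infty$ by a Gronwall-type lemma. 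I expect the main obstacle, and the point at which the present situation departs from the textbook Mourre machinery, to be that the weight here is the \emph{concrete} operator $B=|x|^{-\sigma+\gamma}|D|^\gamma$ rather than the abstract $\langle A\rangle^{-s}$: one must run the entire iteration with $B$ itself. What makes this feasible is that, under dilations, the three operators $A$, $|x|$ and $|D|$ are intertwined, so that their mutual commutators remain in the same homogeneous class---in particular $[A,B]=i\sigma B$ and $[A,B^*]=i\sigma B^*$---so that every commutator with $A$ that lands on $B$ or $B^*$ reproduces $B$ or $B^*$ times a bounded operator, and the iteration closes; the bookkeeping can be organised via the Mellin transform in $|x|$ together with a Littlewood--Paley decomposition in $|D|$, which is where the Besov structure \eqref{Besov} enters. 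Further technical points are the nonlocality of $(-\Delta)^\sigma$ for $\sigma\notin\N$, which forces commutator expansions and form arguments in place of explicit symbol computations, and the passage to the limit $\delta\to0$. Finally, \eqref{theorem_1_2} is the special case $\gamma=0$ of \eqref{theorem_1_1}, admissible exactly when $0\in(\sigma-n/2,\sigma-1/2)$, i.e.\ when $1/2<\sigma<n/2$.
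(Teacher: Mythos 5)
Your high-level reading is correct: the paper's proof is indeed a Mourre-type argument with the dilation generator $A$, adapted from Hoshiro's approach, and your translation of Assumption~\ref{assumption_A} into a global, strict Mourre estimate with a $C^2(A)$-type second-commutator bound is exactly right, as is the reduction to bounding $\|Bu\|^2\lesssim|\mathrm{Im}\,z|\,\|u\|^2+\|g\|^2$ and the use of the virial identity.

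However, there is a genuine gap at the crucial step, and your proposed resolution diverges from what actually makes the proof work. You propose to ``run the entire iteration with $B$ itself,'' controlling the dangerous terms $\mathrm{Im}(z)\langle Au,u\rangle$ and $\langle g,BAu\rangle$ by exploiting the homogeneity $[B,iA]=\sigma B$ together with a Mellin/Littlewood--Paley bookkeeping. But the homogeneity of $B$ does not help you bound $\langle Au,u\rangle$: the Mourre differential-inequality machinery needs a weight $W_\ep$ with $\|W_\ep\|\le 1$, $\|AW_\ep\|\le\ep^{s-1}$, $\|W_\ep'\|\lesssim\ep^{s-1}$ (the paper's Lemma~\ref{lemma_section_2_3}), and $B=|x|^{-\sigma+\gamma}|D|^\gamma$ has no such structure relative to $A$ — $AB$ is simply unbounded, and the scaling identity $[B,iA]=\sigma B$ only reproduces $B$, it does not dampen $A$. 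The Mellin/Besov suggestion is also off track: the Besov structure \eqref{Besov} in the paper is used only for the spherically averaged Strichartz estimates of Theorem~\ref{theorem_4}, not here.

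The paper's resolution is to \emph{decouple} the two roles of the weight. It first proves (Theorem~\ref{theorem_section_2_1}) the estimate with the abstract weight $\langle A\rangle^{-s}|D|^\sigma e^{-\delta|D|}$ for $s>1/2$, where $|D|^\sigma$ cancels against $S_1^{-1/2}$ (since $S_1\gtrsim(-\Delta)^\sigma$), while $\langle A\rangle^{-s}$, regularised to $W_\ep=\langle A\rangle^{-s}\langle\ep A\rangle^{s-1}$, provides the $A$-damping that closes the differential inequality in $\ep$; the regulariser $e^{-\delta|D|}$ is only there to make the operators a priori well defined and is removed at the end. Then, in a separate and purely operator-theoretic step, the weight $\langle A\rangle^{-s}|D|^\sigma$ is traded for $|x|^{-\sigma+\gamma}|D|^\gamma$ by showing that $|x|^{-\sigma+\gamma}|D|^{\gamma-\sigma}\langle A\rangle^{s}$ is bounded on $L^2$: for $\sigma-n/2<\gamma\le\sigma-1$ this follows (with $s=1$) from Hardy's inequality and the explicit form of $A$, and for $\sigma-1<\gamma<\sigma-1/2$ (so $s=\sigma-\gamma\in(1/2,1)$) it is proved by Stein's complex interpolation between the trivial $s=0$ case and the $s=1$ case. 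This decoupling is precisely what your proposal is missing; without it, the proposed differential inequality for a functional built directly on $B$ does not close.
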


\vskip0.2cm
%remark
\begin{remark}
\label{remark_theorem_1_1}
As a consequence of \eqref{theorem_1_1}, $H$ is purely absolutely continuous and has no eigenvalues, namely $\sigma(H)=\sigma_{\mathrm{ac}}(H)=[0,\infty)$ and $\sigma_{\mathrm{sc}}(H)=\sigma_{\mathrm{p}}(H)=\emptyset$ (see \cite[Theorem XIII. 23]{ReSi}).
\end{remark}

\vskip0.2cm
As a corollary, the limiting absorption principle and uniform bounds for the boundary resolvents $(H-\lambda\mp i0)^{-1}$ can be also derived.

\begin{corollary}
\label{corollary_1}
Assume in addition to the condition in Theorem \ref{theorem_1} that
\begin{align}
\label{corollary_1_1}
\<(H_1+V_1)u,u\>\gtrsim \<(H_0+V)u,u\>,\quad u\in C_0^\infty(\R^n).
\end{align}
Let $s>1/2$ and $A$ be the generator of the dilation group (see \eqref{dilation}). Then the limits $$\ds \<A\>^{-s}(H-\lambda\mp i0)^{-1} \<A\>^{-s}:=\lim_{\ep\searrow0} \<A\>^{-s}(H-\lambda\mp i\ep)^{-1} \<A\>^{-s}\in \mathbb B(L^2)$$
exist for all $\lambda>0$. Moreover, the following uniform estimate holds:
\begin{align}
\label{corollary_1_2}
\sup_{\lambda>0}\norm{|x|^{-\sigma+\gamma}|D|^{\gamma}(H-\lambda\mp i0)^{-1}|D|^{\gamma}|x|^{-\sigma+\gamma}}_{L^2\to L^2}<\infty.
\end{align}
\end{corollary}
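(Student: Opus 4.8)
The plan is to deduce Corollary \ref{corollary_1} from Theorem \ref{theorem_1} together with Mourre's commutator method, using the dilation generator $A$ as the conjugate operator. First I would recall that the uniform estimate \eqref{corollary_1_2} is essentially immediate once the boundary values $\langle A\rangle^{-s}(H-\lambda\mp i0)^{-1}\langle A\rangle^{-s}$ are known to exist and to be locally bounded in $\lambda$: indeed \eqref{theorem_1_1} already gives a \emph{global} bound on $|x|^{-\sigma+\gamma}|D|^\gamma(H-z)^{-1}|D|^\gamma|x|^{-\sigma+\gamma}$ for $z\in\mathbb C\setminus[0,\infty)$, and by a standard limiting argument (approximating $\lambda\mp i0$ by $\lambda\mp i\varepsilon$ and using that the weighted resolvent converges weakly, which follows from the limiting absorption principle in the $\langle A\rangle^{-s}$-weighted spaces) the same bound passes to the boundary. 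So the real content is the existence of the limits.

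The key steps for the existence are as follows. The operator $H$ is self-adjoint with $\sigma(H)=\sigma_{\mathrm{ac}}(H)=[0,\infty)$ by Remark \ref{remark_theorem_1_1}. I would verify the hypotheses of Mourre theory for the pair $(H,A)$ on any compact subinterval of $(0,\infty)$: namely that $H$ is of class $C^{1,1}(A)$ (or at least $C^2(A)$, which in this scaling-homogeneous setting is the natural regularity), that the form commutator $i[H,A]$ extends to a nice operator, and that a strict Mourre estimate $E_I(H)\,i[H,A]\,E_I(H)\ge c\,E_I(H)$ holds for $I\Subset(0,\infty)$. Here the crucial computation is that, because $H_0=(-\Delta)^\sigma$ is homogeneous of degree $2\sigma$ and the commutator of a multiplier with the dilation generator produces the Euler operator $\xi\cdot\nabla_\xi$ applied to the symbol, one gets $i[H_0,A]=2\sigma H_0=H_1$ and $i[V,A]=-(x\cdot\nabla)V=V_1$ (this is exactly why the quantities $H_\ell$ and $V_\ell$ in \eqref{symbol_ell} were introduced). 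Hence $i[H,A]=H_1+V_1$, and the hypothesis \eqref{corollary_1_1} together with \eqref{assumption_A_1} gives $i[H,A]\gtrsim H\gtrsim H_0$, which on a spectral interval $I\Subset(0,\infty)$ yields the required strict positivity $E_I(H)\,i[H,A]\,E_I(H)\gtrsim E_I(H)$. The second commutator $i[[H,A],A]=H_2+V_2$ is controlled by $i[H,A]=H_1+V_1$ in form sense by \eqref{assumption_A_3}, which supplies the $C^2(A)$ (hence $C^{1,1}(A)$) regularity. Mourre's theorem then yields the limiting absorption principle: $\langle A\rangle^{-s}(H-\lambda\mp i\varepsilon)^{-1}\langle A\rangle^{-s}$ converges in $\mathbb B(L^2)$ as $\varepsilon\searrow0$, uniformly for $\lambda$ in compact subsets of $(0,\infty)$.

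It remains to patch in the endpoint $\lambda=0$ and the behavior as $\lambda\to\infty$, i.e. to upgrade the local-in-$\lambda$ statement to a uniform one over all $\lambda>0$; I expect this gluing to be the main technical obstacle. The idea is a scaling argument: since $H_0$ is exactly homogeneous and $V$ is form-subordinate with the right homogeneity structure encoded in Assumption \ref{assumption_A}, the operator $H$ behaves consistently under the dilation $u\mapsto \lambda^{n/(4\sigma)}u(\lambda^{1/(2\sigma)}\cdot)$, so that the weighted resolvent bound at a general positive $\lambda$ can be reduced to the bound at $\lambda=1$ — and here one must be careful that the weight $|x|^{-\sigma+\gamma}|D|^\gamma$ transforms compatibly with this scaling, which it does because $-\sigma+\gamma$ and $\gamma$ are the homogeneity-matched exponents appearing in \eqref{theorem_1_1}. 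For the full strength of \eqref{corollary_1_2} one can either run this scaling reduction directly, or simply observe that \eqref{theorem_1_1} already furnishes the uniform bound away from the boundary and only the passage to $\lambda\mp i0$ needs the Mourre input, so \eqref{corollary_1_2} follows by combining the two. The passage to the $\langle A\rangle^{-s}$ weights in place of the $|x|^{-\sigma+\gamma}|D|^\gamma$ weights is handled by noting that both give the limiting absorption principle for the same operator $H$, so the existence of one family of boundary values forces the existence of the other via the resolvent identity and density arguments; I would spell this out using that $C_0^\infty(\mathbb R^n)$ is a core and that $\langle A\rangle^{-s}$ and $|x|^{-\sigma+\gamma}|D|^\gamma\langle D\rangle^{-\gamma}$-type operators are comparable on the relevant spectral subspaces. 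The upshot is that \eqref{corollary_1_2} is a clean corollary of \eqref{theorem_1_1} plus the Mourre estimate $i[H,A]=H_1+V_1\gtrsim H_0$ granted by \eqref{corollary_1_1}.
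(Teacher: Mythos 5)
Your proposal takes essentially the same route as the paper: Mourre theory with the dilation generator $A$, the commutator identifications $i[H,A]=H_1+V_1$ and $i[[H,A],iA]=H_2+V_2$ (these are exactly the forms $Q_{S_1},Q_{S_2}$ from Section~\ref{section_2}), the strict positivity supplied by~\eqref{corollary_1_1}, and the observation that \eqref{corollary_1_2} drops out of Theorem~\ref{theorem_1} once the boundary values exist. Where the paper is more careful than your sketch is precisely on the point you flag as ``$C^{1,1}(A)$/$C^2(A)$ regularity'': because $[H,iA]=H_1+V_1$ is an \emph{unbounded} form, one cannot directly feed $(H,A)$ into the textbook Mourre theorem. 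The paper's device is to introduce spectral cutoffs $\varphi_1,\varphi_2\in C_0^\infty((0,\infty))$ with $\varphi_1\varphi_2\equiv\varphi_1$, replace $H$ by the \emph{bounded} operator $H_{\varphi_2}=H\varphi_2^2(H)$, verify the Mourre inequality for the bounded commutator $M_1=\varphi_1(H)[H_{\varphi_2},iA]\varphi_1(H)=\varphi_1(H)[H,iA]\varphi_1(H)\gtrsim\varphi_1(H)^2$, obtain the LAP for $H_{\varphi_2}$, and then recover the LAP for $H$ via the decomposition $(H-z)^{-1}=\varphi_1(H)(H_{\varphi_2}-z)^{-1}+(1-\varphi_1)(H)(H-z)^{-1}$, where the second term is analytic across $I$; the boundedness of $\<A\>^{-s}\varphi_1(H)\<A\>^{s}$ (by interpolation) is also needed to glue these back together. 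Your scaling remarks about $\lambda\to0$ and $\lambda\to\infty$ are unnecessary: the existence of the limit is a pointwise statement in $\lambda$, so the local Mourre result already covers all $\lambda>0$, and the uniformity in~\eqref{corollary_1_2} comes directly from Theorem~\ref{theorem_1} by letting $\ep\searrow0$ in $|\<(H-\lambda-i\ep)^{-1}f,g\>|\lesssim\norm{|x|^{\sigma-\gamma}|D|^{-\gamma}f}\,\norm{|x|^{\sigma-\gamma}|D|^{-\gamma}g}$ for $f,g\in C_0^\infty$ — you did identify this as the simpler path, so your account is correct in spirit but would need the energy-cutoff step filled in to be a complete proof.
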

\vskip0.2cm
Note that \eqref{corollary_1_1} holds if $V$ satisfies the conditions in Example \ref{example_1}. \vskip0.3cm
Next we consider the time-dependent Schr\"odinger equation associated with $H$:
\begin{align}
\label{Cauchy_2}
(i\partial_t-H) \psi(t,x)=F(t,x),\quad \psi(0,x)=\psi_0(x),
\end{align}
with given data $\psi_0$ and $F$, where the solution $\psi$ is given by the Duhamel formula:
\begin{equation}
\label{Duhamel}
\psi=e^{-itH}\psi_0-i\int_0^te^{-i(t-s)H}F(s)ds.
\end{equation}
 The next theorem, called Kato smoothing estimates or also Kato--Yajima estimates, is a direct consequence of Theorem \ref{theorem_1}.

%theorem
\begin{theorem}	
\label{theorem_2}
Let $n\ge2$, $0<\sigma<n/2$, $\sigma-n/2<\gamma<\sigma-1/2$ and $H=(-\Delta)^\sigma+V$ satisfy Assumption \ref{assumption_A}. Then $\psi$ defined by \eqref{Duhamel} satisfies the following estimates:
\begin{align}
\label{theorem_2_1}
\norm{|x|^{-\sigma+\gamma}|D|^{\gamma}\psi}_{L^2_tL^2_x}&\lesssim \norm{\psi_0}_{L^2_x}+\norm{|x|^{\sigma-\gamma}|D|^{-\gamma}F}_{L^2_tL^2_x}.
\end{align}
In particular, if in addition $\sigma>1/2$, then the following local decay estimate holds:
\begin{align}
\label{theorem_2_2}
\norm{|x|^{-\sigma}\psi}_{L^2_tL^2_x}&\lesssim \norm{\psi_0}_{L^2_x}+\norm{|x|^\sigma F}_{L^2_tL^2_x}.
\end{align}
\end{theorem}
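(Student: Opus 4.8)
The plan is to read off both estimates from the uniform resolvent bound of Theorem~\ref{theorem_1} by Kato's smoothing theory \cite{KaYa}. Set $A:=|x|^{-\sigma+\gamma}|D|^{\gamma}$ and $A^{*}=|D|^{\gamma}|x|^{-\sigma+\gamma}$, noting that $|x|^{-\sigma+\gamma}$ and $|D|^{\gamma}$ are self-adjoint and recalling $\sigma-n/2<\gamma<\sigma-1/2$. Then \eqref{theorem_1_1} is precisely the statement that
\[
M:=\sup_{z\in\C\setminus[0,\infty)}\norm{A(H-z)^{-1}A^{*}}<\infty,
\]
i.e.\ $A$ is $H$-supersmooth. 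It suffices to establish the homogeneous bound
\begin{align}
\label{pp_hom}
\norm{Ae^{-itH}\psi_0}_{L^2_tL^2_x}\lesssim\norm{\psi_0}_{L^2_x}
\end{align}
together with the retarded inhomogeneous bound
\begin{align}
\label{pp_inh}
\Big\|A\int_0^te^{-i(t-s)H}A^{*}g(s)\,ds\Big\|_{L^2_tL^2_x}\lesssim\norm{g}_{L^2_tL^2_x},
\end{align}
with constants depending only on $M$. Indeed, granting these and using \eqref{Duhamel} with the triangle inequality,
\[
\norm{A\psi}_{L^2_tL^2_x}\le\norm{Ae^{-itH}\psi_0}_{L^2_tL^2_x}+\Big\|A\int_0^te^{-i(t-s)H}F(s)\,ds\Big\|_{L^2_tL^2_x};
\]
for $F$ with $\norm{|x|^{\sigma-\gamma}|D|^{-\gamma}F}_{L^2_tL^2_x}<\infty$ we choose $g:=|x|^{\sigma-\gamma}|D|^{-\gamma}F=(A^{*})^{-1}F$, so that $A^{*}g=F$ and $\norm{g}_{L^2_tL^2_x}$ equals the stated weighted norm of $F$ (by density we may assume $F$ lies in a class for which these identities are literal), and then \eqref{pp_hom}--\eqref{pp_inh} give \eqref{theorem_2_1}. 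Finally, when $1/2<\sigma<n/2$ the value $\gamma=0$ satisfies $\sigma-n/2<0<\sigma-1/2$ and is admissible; taking $\gamma=0$ turns $A$ into $|x|^{-\sigma}$ and $(A^{*})^{-1}$ into $|x|^{\sigma}$, so \eqref{theorem_2_1} specializes to \eqref{theorem_2_2}.

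To prove \eqref{pp_hom} I would use the $TT^{*}$ argument with $T\psi_0:=Ae^{-itH}\psi_0$, for which $TT^{*}g(t)=A\int_{\R}e^{-i(t-s)H}A^{*}g(s)\,ds$ is a convolution in $t$. Passing to the $t$-Fourier transform and invoking the spectral theorem reduces $\norm{TT^{*}}_{L^2_tL^2_x\to L^2_tL^2_x}$ to $\sup_{\tau\in\R}\norm{AE_H'(\tau)A^{*}}$, where $E_H'(\tau)$ denotes the density of the spectral measure of $H$. By Stone's formula $E_H'(\tau)=\tfrac1\pi\lim_{\ep\searrow0}\Im(H-\tau-i\ep)^{-1}$, and $\norm{A\Im(H-z)^{-1}A^{*}}\le M$ whenever $\Im z\neq0$, so $\sup_{\tau}\norm{AE_H'(\tau)A^{*}}\le M/\pi$ and hence $\norm{TT^{*}}\lesssim M$; this yields \eqref{pp_hom}. (The distributional manipulation is made rigorous in the standard way, e.g.\ by first inserting an $e^{-\ep|t|}$ cutoff; incidentally this shows \eqref{theorem_1_1} and \eqref{pp_hom} are equivalent.)

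The one point requiring genuine care is \eqref{pp_inh}: since both time exponents equal $2$, the Christ--Kiselev lemma is unavailable, so I would argue directly via the one-sided (Laplace) transform, exploiting the causal structure of the Duhamel integral. By time reversal it suffices to bound, for $g$ supported in $[0,\infty)$, the function $w(t):=\int_0^te^{-i(t-s)H}A^{*}g(s)\,ds$ over $t\ge0$. For $\Re p>0$ one computes $\int_0^\infty e^{-pu}e^{-iuH}\,du=(p+iH)^{-1}=-i(H-ip)^{-1}$ in $\mathbb B(L^2)$, so Fubini gives, with $\widehat{\cdot}$ denoting the Laplace transform in $t$,
\[
\widehat{w}(p)=-i\,(H-ip)^{-1}A^{*}\widehat{g}(p),\qquad\Re p>0.
\]
Taking $p=\ep+i\tau$, so that $ip=i\ep-\tau\in\C\setminus[0,\infty)$, and using $\norm{A(H-z)^{-1}A^{*}h}\le M\norm{h}$ for all $h\in L^2$, we get $\norm{A\widehat{w}(\ep+i\tau)}\le M\norm{\widehat{g}(\ep+i\tau)}$ for every $\tau\in\R$. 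Plancherel applied to the one-sided functions $t\mapsto e^{-\ep t}w(t)$ and $t\mapsto e^{-\ep t}g(t)$ then yields $\int_0^\infty e^{-2\ep t}\norm{Aw(t)}^2\,dt\le M^2\int_0^\infty e^{-2\ep t}\norm{g(t)}^2\,dt$, and letting $\ep\searrow0$ by monotone convergence gives \eqref{pp_inh}. This argument uses only the bound $M$ over $\C\setminus[0,\infty)$ --- hence only Assumption~\ref{assumption_A} --- and needs no boundary resolvent values (cf.\ Corollary~\ref{corollary_1}). The main obstacle is therefore concentrated in this endpoint inhomogeneous estimate and is handled by the Laplace-transform trick above; the rest is bookkeeping around the non-self-adjoint operator $A$ and its adjoint $A^{*}$.
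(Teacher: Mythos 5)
Your proof is correct and follows essentially the same route as the paper: the paper reads Theorem~\ref{theorem_2} off from Theorem~\ref{theorem_1} by invoking the abstract equivalence in Lemma~\ref{lemma_section_2_6} (Kato \cite{Kat}, D'Ancona \cite{DAn}) with $G=|x|^{-\sigma+\gamma}|D|^{\gamma}$, and your $TT^{*}$ argument for the homogeneous estimate together with the one-sided Laplace-transform argument for the retarded inhomogeneous estimate is precisely the standard proof of that abstract lemma. The only difference is that you supply this proof inline rather than citing it.
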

\vskip0.2cm
%remark
\begin{remark}
\label{remark_smoothing}
For the free case $V\equiv0$, the estimate \eqref{theorem_2_1} is well known and the condition $\sigma-n/2<\gamma<\sigma-1/2$ is known to be sharp (see e.g. \cite{RuSu} and references therein).
\end{remark}

We have seen that the above theorems hold for all $V$ satisfying Assumption \ref{assumption_A}, including some slowly decaying potentials given in Example \ref{example_2}. In the following three theorems, we impose an additional condition $|x|^\sigma V\in L^{n/\sigma,\infty}$ or $|x|^{2\sigma} V\in L^\infty$, both of which particularly holds for the Hardy potential \eqref{Hardy_potential}. This roughly means that $V$ decays like $|x|^{-2\sigma}$ at infinity. The optimality of this decay rate will be discussed in Remark \ref{remark_theorems} below.
 \vskip0.3cm
Now we state the result on Strichartz estimates which, from a viewpoint of applications to nonlinear problems, is probably the most important consequence of the paper. Recall that, for a given $\alpha>0$, a pair $(p,q)$ is said to be sharp $\alpha$-admissible if
\begin{align}
\label{admissible}
2\le p,q\le \infty,\quad 1/p=\alpha(1/2-1/q),\quad(p,q,\alpha)\neq(2,\infty,1).
\end{align}
In what follows we omit the word {\it sharp}, calling a pair $(p,q)$ satisfying \eqref{admissible} to be $\alpha$-admissible for simplicity. When $\alpha\ge1$, the pair $(2,2\alpha/(\alpha-1))$ is called the endpoint.

%theorem
\vskip0.3cm
\begin{theorem}[Higher-order case]
\label{theorem_3}
Let $n\ge3$, $1<\sigma<n/2$, $H=(-\Delta)^\sigma+V$ satisfy Assumption \ref{assumption_A} and $|x|^{\sigma}V\in L^{n/\sigma,\infty}(\R^n)$. Then the following statements hold for $\psi$ given by \eqref{Duhamel}.
\vskip0.3cm
\begin{itemize}
\item \underline{(Standard) Strichartz estimates}: if $(p_1,q_1)$ and $(p_2,q_2)$ are $n/(2\sigma)$-admissible, then
\begin{align}
\label{theorem_3_1}
\norm{\psi}_{L^{p_1}_tL^{q_1}_x}\lesssim \norm{\psi_0}_{L^2_x}+\norm{F}_{L^{p_2'}_tL^{q_2'}_x}.
\end{align}
In particular, the following endpoint estimates hold:
\begin{align*}
%\label{eq4.4'}
\|e^{-itH}\psi_0\|_{L^{2}_tL^{\frac{2n}{n-2\sigma}}_x}&\lesssim \|\psi_0\|_{L^2_x},\\
%\label{eq4.4''}
\bignorm{\int _{0}^t e^{-i(t-s)H}F(s)ds}_{L^2_tL^{\frac{2n}{n-2\sigma}}_x}&\lesssim \|F\|_{L^2_tL^{\frac{2n}{n+2\sigma}}_x}.
\end{align*}
 \vskip0.3cm
\item \underline{Improved Strichartz estimates}: if $(p,q)$ and $(\tilde p,\tilde q)$ are ${n}/{2}$-admissible, then
\begin{align}
\label{theorem_3_2}
\norm{|D|^{{2(\sigma-1)}/{p}}\psi}_{L^p_tL^{q}_x}\lesssim \norm{\psi_0}_{L^2_x}+\norm{|D|^{{2(1-\sigma)}/\tilde p}F}_{L^{\tilde p'}_tL^{\tilde q'}_x},
\end{align}
which particularly implies the following endpoint estimates:
\begin{align*}
%\label{lemma_section_3_1_3''}
\big\||D|^{\sigma-1}e^{-itH}\psi_0\big\|_{L^2_tL^{\frac{2n}{n-2}}_x}&\lesssim \norm{\psi_0}_{L^2_x},\\
\bignorm{|D|^{\sigma-1}\int _{0}^t e^{-i(t-s)H}F(s)ds}_{L^2_tL^{\frac{2n}{n-2}}_x}&\lesssim \||D|^{1-\sigma} F\|_{L^2_tL^{\frac{2n}{n+2}}_x}.
\end{align*}
\end{itemize}
\end{theorem}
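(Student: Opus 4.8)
The plan is perturbative: to deduce the Strichartz estimates for $e^{-itH}$ from those for the free group $e^{-itH_0}$, $H_0=(-\Delta)^\sigma$, together with the Kato--Yajima smoothing bounds of Theorem~\ref{theorem_2}, in the spirit of Burq--Planchon--Stalker--Tahvildar-Zadeh \cite{BPST1,BPST2} and Bouclet--Mizutani \cite{BoMi}. Three ingredients are needed. First, the Strichartz estimates for $e^{-itH_0}$: the standard $n/(2\sigma)$-admissible family $\norm{e^{-itH_0}\psi_0}_{L^p_tL^q_x}\lesssim\norm{\psi_0}_{L^2_x}$ and the improved $n/2$-admissible family $\norm{|D|^{2(\sigma-1)/p}e^{-itH_0}\psi_0}_{L^p_tL^q_x}\lesssim\norm{\psi_0}_{L^2_x}$, in their homogeneous, retarded and Lorentz-refined endpoint versions; these are classical for the free fractional group (see the references in the introduction), following from the frequency-localized dispersive bounds for $e^{-itH_0}$ --- for $\sigma\ge1$ one has $\norm{e^{-itH_0}\varphi_j(D)}_{L^1\to L^\infty}\lesssim|t|^{-n/(2\sigma)}$ uniformly in $j$ --- via the Keel--Tao theorem applied block by block. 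Second, Theorem~\ref{theorem_2} with $\gamma=0$, which is admissible because $1<\sigma<n/2$ forces $\sigma-n/2<0<\sigma-1/2$: namely $\norm{|x|^{-\sigma}e^{-isH}\psi_0}_{L^2_sL^2_x}\lesssim\norm{\psi_0}_{L^2_x}$ with its retarded version \eqref{theorem_2_2}, and the bilinear smoothing bound \eqref{theorem_1_2}. Third, the Duhamel identity $e^{-itH}=e^{-itH_0}-i\int_0^te^{-i(t-s)H_0}V e^{-isH}\,ds$ and the Christ--Kiselev lemma.

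For the homogeneous part of \eqref{theorem_3_1}, I apply the free Strichartz estimate to $e^{-itH_0}\psi_0$, and for the Duhamel term I observe that its source belongs to the space dual to the Lorentz-refined endpoint Strichartz space of $H_0$. Writing $V=|x|^{-\sigma}\cdot(|x|^\sigma V)$ and using $|x|^\sigma V\in L^{n/\sigma,\infty}$ with Hölder's inequality in Lorentz spaces (based on $\tfrac{n+2\sigma}{2n}=\tfrac{\sigma}{n}+\tfrac12$ and $L^{n/\sigma,\infty}\cdot L^2\hookrightarrow L^{2n/(n+2\sigma),2}$), one obtains
\begin{align*}
\norm{V e^{-isH}\psi_0}_{L^2_sL^{2n/(n+2\sigma),2}_x}\lesssim\norm{|x|^{-\sigma}e^{-isH}\psi_0}_{L^2_sL^2_x}\lesssim\norm{\psi_0}_{L^2_x}
\end{align*}
by Theorem~\ref{theorem_2}. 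The free retarded Strichartz estimate --- valid for all $n/(2\sigma)$-admissible pairs including the double endpoint, by Keel--Tao applied frequency block by block --- with this dual endpoint on the right then gives $\bignorm{\int_0^te^{-i(t-s)H_0}V e^{-isH}\psi_0\,ds}_{L^{p_1}_tL^{q_1}_x}\lesssim\norm{\psi_0}_{L^2_x}$. This establishes the homogeneous Strichartz estimate for $e^{-itH}$, and the full inhomogeneous estimate \eqref{theorem_3_1} follows from it by the standard $TT^*$ duality argument, except at the double endpoint $(p_1,p_2)=(2,2)$, where Christ--Kiselev is inapplicable and the separate argument of the last paragraph is needed.

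For \eqref{theorem_3_2} I run the same Duhamel argument with the weighted (improved) free Strichartz estimates in place of the unweighted ones. For the Duhamel term I apply the free improved retarded estimate with the endpoint auxiliary pair, in its Lorentz-refined form
\begin{align*}
\bignorm{|D|^{2(\sigma-1)/p}\int_0^t e^{-i(t-s)H_0}|D|^{\sigma-1}G(s)\,ds}_{L^p_tL^q_x}\lesssim\norm{G}_{L^2_tL^{2n/(n+2),2}_x},
\end{align*}
so it remains to bound $|D|^{1-\sigma}V e^{-isH}\psi_0$ in $L^2_tL^{2n/(n+2),2}_x$ by $\norm{\psi_0}_{L^2_x}$. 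Since $\sigma>1$, $|D|^{1-\sigma}$ is a Riesz potential of positive order, and by the Lorentz version of the Hardy--Littlewood--Sobolev inequality it maps $L^{2n/(n+2\sigma),2}_x$ boundedly into $L^{2n/(n+2),2}_x$; combined with the bound $\norm{V e^{-isH}\psi_0}_{L^2_sL^{2n/(n+2\sigma),2}_x}\lesssim\norm{\psi_0}_{L^2_x}$ of the previous paragraph, this closes the estimate. Thus only the case $\gamma=0$ of Theorem~\ref{theorem_2} is used: the restriction $1<\sigma<n/2$ enters exactly at $\sigma>1$ (so that $|D|^{1-\sigma}$ smooths and $\gamma=0$ is admissible) and at $\sigma<n/2$ (so that $2n/(n-2\sigma)$ and the other exponents are meaningful). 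The inhomogeneous part of \eqref{theorem_3_2} follows by $TT^*$, again with the double-endpoint case treated in the last paragraph.

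The main obstacle is precisely this double endpoint ($p_1=p_2=2$, resp. $p=\tilde p=2$) of the \emph{inhomogeneous} estimates, where Christ--Kiselev is unavailable because the target and source time exponents coincide. I would treat it by iterating Duhamel once more, writing $\int_0^te^{-i(t-s)H}F\,ds=\int_0^te^{-i(t-s)H_0}F\,ds-i\int_0^te^{-i(t-s)H_0}V\,\psi^{\mathrm{inh}}(s)\,ds$ with $\psi^{\mathrm{inh}}(s)=\int_0^se^{-i(s-\tau)H}F(\tau)\,d\tau$: the first term is handled by the free double-endpoint retarded estimate, while the second is controlled by combining the Lorentz-Hölder factorization of $V$ with the bilinear smoothing bound \eqref{theorem_1_2} applied to $\psi^{\mathrm{inh}}$, as in \cite{BoMi} (and \cite{BPST1,BPST2} for $\sigma=1$). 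A secondary point, visible already above, is the Lorentz-space bookkeeping, where one checks that the critical-decay hypothesis $|x|^\sigma V\in L^{n/\sigma,\infty}$ --- weaker than $|x|^{2\sigma}V\in L^\infty$ --- still suffices, the Hölder and Hardy--Littlewood--Sobolev exponents matching up precisely because of the scaling invariance of the problem. The remaining steps (the Littlewood--Paley summation behind the free estimates, boundedness of the multiplication and Riesz-potential operators, and the applications of Christ--Kiselev) are routine.
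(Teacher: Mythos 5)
Your proposal is correct and follows essentially the same route as the paper: Duhamel perturbation combined with the free Strichartz estimates of Lemma~\ref{lemma_section_3_1}, the smoothing bounds of Theorem~\ref{theorem_2}, Lorentz H\"older and Sobolev (Hardy--Littlewood--Sobolev), and Christ--Kiselev, with the paper merely packaging the double-endpoint iteration into the abstract Lemma~\ref{lemma_section_3_3} (after Bouclet--Mizutani) and deducing \eqref{theorem_3_1} from \eqref{theorem_3_2} via the Sobolev embedding \eqref{Sobolev_2} rather than proving both directly. Two minor slips that do not affect the argument: the uniform-in-$j$ frequency-localized dispersive bound for $\sigma>1$ is $\norm{e^{-itH_0}\varphi_j(D)}_{L^1\to L^\infty}\lesssim 2^{-jn(\sigma-1)}|t|^{-n/2}$ rather than $|t|^{-n/(2\sigma)}$ uniformly in $j$ (the latter fails for $j<0$ or $|t|<1$), and the double-endpoint inhomogeneous estimate in fact requires both Duhamel identities $\Gamma_H=\Gamma_{H_0}-i\Gamma_{H_0}V\Gamma_H$ and $\Gamma_H=\Gamma_{H_0}-i\Gamma_H V\Gamma_{H_0}$ together with the $H$-supersmoothness $\norm{|x|^{-\sigma}\Gamma_H |x|^{-\sigma}}_{L^2_tL^2_x\to L^2_tL^2_x}\lesssim1$ from \eqref{theorem_2_2}, which your sketch invokes in spirit (referencing \cite{BoMi}) but writes out only the first identity.
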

\vskip0.3cm
Note that in case of $\sigma=1$, the two estimates \eqref{theorem_3_1} and \eqref{theorem_3_2} are the same and were obtained by \cite{BPST1} and \cite{BPST2}. On the other hand, if $\sigma>1$, \eqref{theorem_3_1} in fact follows from \eqref{theorem_3_2}. Indeed, if $(p,q)$ is $n/2$-admissible and $(p,q_1)$ is $n/(2\sigma)$-admissible, then $p,q,q_1$ satisfy $1/q-1/q_1=2(\sigma-1)/(np)$ and Sobolev's inequality \eqref{Sobolev} thus implies
\begin{align}
\label{Sobolev_2}
\norm{f}_{L^{q_1}}\lesssim \norm{|D|^{{2(\sigma-1)}/{p}}f}_{L^{q}}.
\end{align}
In this sense, {\it \eqref{theorem_3_2} has an additional smoothing effect compared with \eqref{theorem_3_1} if $\sigma>1$.} This is one of features of higher-order dispersive equations.
\vskip0.3cm
%theorem
\begin{theorem}[Second-order and fractional cases]
\label{theorem_4}
Let $n\ge2$, $1/2<\sigma\le1$, $H=(-\Delta)^\sigma+V$ satisfy Assumption \ref{assumption_A} and $|x|^{2\sigma}V\in L^\infty(\R^n)$. Then the following statements hold for $\psi$ given by \eqref{Duhamel}.
\begin{itemize}
\vskip0.3cm
\item \underline{Strichartz estimates with a loss of derivatives}: if $(p,q)$ and $(\tilde p,\tilde q)$ are ${n}/{2}$-admissible and $p,\tilde p>2$, then the same estimates as \eqref{theorem_3_2} are satisfied.
\vskip0.3cm
\item \underline{Spherically averaged Strichartz estimates}: let $(p_1,q_1)$ and $(p_2,q_2)$ satisfy
\begin{align}
\label{theorem_4_1}
2<p_j,q_j\le\infty,\quad 1/p_j\le (n-1/2)(1/2-1/q_j)
\end{align}
and $1/p_j\neq (n-1/2)(1/2-1/q_j)$ if $n=2$, and let $$s_j=s(p_j,q_j):=-n(1/2-1/q_j)+2\sigma/p_j.$$
Then the following estimates hold:
\begin{align}
\label{theorem_4_2}
\norm{|D|^{s_1} \psi}_{L^{p_1}_t B[\mathcal L^{q_1}_rL^2_\omega]}\lesssim \norm{\psi_0}_{L^2_x}+\||D|^{-s_2}F\|_{L^{p_2'}_t B[\mathcal L^{q_2'}_rL^2_\omega]},
\end{align}
where the Besov-type space $B[\mathcal L^{p}_rL^2_\omega]$ is defined by \eqref{Besov}.
\vskip0.3cm
\item \underline{Improved endpoint Strichartz estimates under the radial symmetry}: let $V$, $\psi_0$ and $F$ be radially symmetric. If $n\ge3$, $n/(2n-1)<\sigma\le1$,  $q_1,q_2>(4n-2)/(2n-3)$, then the following endpoint estimates hold:
\begin{align}
\label{theorem_4_3}
\norm{|D|^{s(2,q_1)}\psi}_{L^2_t L^{q_1}_x}\lesssim \norm{\psi_0}_{L^2_x}+\||D|^{-s(2,q_2)}F\|_{L^{2}_tL^{q_2'}_x},
\end{align}
where $s(2,q_j):=-n(1/2-1/q_j)+\sigma$ for $j=1,2$. As $\sigma=1$, one particularly has the following improved Strichartz estimates for the Schr\"odinger operator $-\Delta+V(x)$:
\begin{align}
\label{theorem_4_4}
\norm{|D|^{s}\psi}_{L^2_tL^{\frac{2n}{n-2+2s}}_x}\lesssim \norm{\psi_0}_{L^2_x}+\norm{|D|^{-s}F}_{L^2_tL^{\frac{2n}{n+2-2s}}_x},\quad 0<s<\frac{n-1}{2n-1}.
\end{align}
 \end{itemize}
\end{theorem}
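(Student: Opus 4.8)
The plan is to obtain all three families of estimates by perturbing off the corresponding estimates for the free fractional Laplacian $H_0=(-\Delta)^\sigma$. Two structural inputs drive the argument. First, since $|x|^{2\sigma}V\in L^\infty$, the potential factors as $V=|x|^{-\sigma}W|x|^{-\sigma}$ with $W=|x|^{2\sigma}V$ bounded on $L^2$. Second, by Theorem \ref{theorem_2} with $\gamma=0$ (admissible because $\sigma>1/2$), the operator $|x|^{-\sigma}$ is both $H$-supersmooth and $H_0$-supersmooth, in the homogeneous form $\||x|^{-\sigma}e^{-itH}\psi_0\|_{L^2_tL^2_x}\lesssim\|\psi_0\|$, and by Theorem \ref{theorem_1} one also has $\sup_{z}\||x|^{-\sigma}(H-z)^{-1}|x|^{-\sigma}\|<\infty$, which by Kato's theory yields the doubly weighted retarded bound $\||x|^{-\sigma}\int_0^te^{-i(t-s)H}|x|^{-\sigma}g(s)\,ds\|_{L^2_tL^2_x}\lesssim\|g\|_{L^2_tL^2_x}$ \emph{without} recourse to Christ--Kiselev. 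The free estimates needed as unperturbed inputs --- the fixed-time fractional Strichartz estimates with the scaling correction $|D|^{2(\sigma-1)/p}$ for $n/2$-admissible pairs, the spherically averaged ($L^2_\omega$-angular) estimates on $B[\mathcal L^q_rL^2_\omega]$ under \eqref{theorem_4_1}, and their radial endpoint versions, together with the associated retarded estimates --- are available in the literature (cf.\ \cite{KPV}, \cite{Guo}, \cite{GLNY}).

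For the first two bullets ($p,\tilde p>2$) it suffices to prove the homogeneous estimates, the inhomogeneous ones then following from the dual homogeneous estimate and the Christ--Kiselev lemma. For the homogeneous estimate I would expand $e^{-itH}\psi_0=e^{-itH_0}\psi_0-i\int_0^te^{-i(t-s)H_0}Ve^{-isH}\psi_0\,ds$; the first term is handled by the free estimate, and for the second one sets $g(s)=W|x|^{-\sigma}e^{-isH}\psi_0$, so that $Ve^{-isH}\psi_0=|x|^{-\sigma}g(s)$ and $\|g\|_{L^2_sL^2_x}\lesssim\|\psi_0\|$ by Theorem \ref{theorem_2}. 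It then remains to prove a \emph{mixed retarded estimate}: the free propagator $\int_0^te^{-i(t-s)H_0}|x|^{-\sigma}(\cdot)\,ds$ maps $L^2_{s,x}$ into the target Strichartz-type space. For the integral over all $s\in\R$ this follows by composing the adjoint free smoothing estimate, $g\mapsto\int_\R e^{isH_0}|x|^{-\sigma}g(s)\,ds\in L^2$, with the free target-norm estimate; the truncation to $\int_0^t$ is recovered by Christ--Kiselev, which applies because every time-integrability exponent in play is $>2$ (hence the hypotheses $p,\tilde p>2$ and $p_j>2$ in \eqref{theorem_4_1}). For the spherically averaged bullet one uses in addition that the $L^2_\omega$-angular structure is compatible with $|x|^{-\sigma}$, so that the $L^2_tL^2_x=L^2_t\mathcal L^2_rL^2_\omega$ smoothing bound feeds directly into $B[\mathcal L^q_rL^2_\omega]$ after the dyadic decomposition built into that space. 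In both bullets the relevant regularity weight ($|D|^{2(\sigma-1)/p}$, resp.\ $|D|^{s_j}$) is carried entirely by the free target-norm estimate, which dictates how the Duhamel term is factored.

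The third bullet, the radial endpoint $p_1=p_2=2$, is the delicate case, since Christ--Kiselev is unavailable at $p=2$. Here one uses that radial symmetry of $V,\psi_0,F$ is preserved by $e^{-itH}$ and $e^{-itH_0}$ (because $V$ is radial and $H$ commutes with rotations), and that for radial data the homogeneous and the inhomogeneous free endpoint Strichartz estimates hold on the stated ranges --- the boundary $q_j=(4n-2)/(2n-3)$ and the threshold $\sigma>n/(2n-1)$ being precisely where the free radial endpoint estimate for $(-\Delta)^\sigma$ is valid. The perturbation is then carried out in the spectral representation rather than through a retarded time integral: by Plancherel in $t$ the relevant bounds reduce to uniform-in-$\lambda$ resolvent estimates, namely the free radial \emph{mixed} bound $\sup_{\lambda}\||x|^{-\sigma}(H_0-\lambda\mp i0)^{-1}|D|^{s(2,q)}\|_{L^{q'}\to L^2}<\infty$ on radial functions (a radial-improved Stein--Tomas-type estimate) together with the perturbed doubly weighted bound of Theorem \ref{theorem_1}, combined via the resolvent identity $(H-\lambda\mp i0)^{-1}=(H_0-\lambda\mp i0)^{-1}-(H_0-\lambda\mp i0)^{-1}|x|^{-\sigma}W|x|^{-\sigma}(H-\lambda\mp i0)^{-1}$; in this picture no retarded time integral, and hence no Christ--Kiselev step, appears. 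Equivalently, \eqref{theorem_4_3}--\eqref{theorem_4_4} is the $p=2$ endpoint of the spherically averaged estimates specialized to radial functions, using the embedding $L^q_x\hookrightarrow B[\mathcal L^q_rL^2_\omega]$ valid for radial $f$ and $q\ge2$.

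The main obstacle is precisely this endpoint mechanism: one needs the sharp free radial mixed resolvent/Strichartz estimates at $p=2$ --- the analogue of the radial-improved inhomogeneous estimates that fail for general data because of the Knapp counterexample --- and must combine them with the perturbed resolvent bounds of Theorem \ref{theorem_1} purely in the spectral variable, paralleling the delicate double-endpoint analysis of \cite{BoMi} for $\sigma=1$. Everything else is, modulo the bookkeeping of the regularity weights indicated above, comparatively routine; in particular Assumption \ref{assumption_A} enters only through Theorems \ref{theorem_1} and \ref{theorem_2}, so no further structural input on $V$ is required.
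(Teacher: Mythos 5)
Your plan for the first two bullets is essentially the paper's: factor $V$ through $|x|^{-\sigma}$, exploit the $H_0$- and $H$-supersmoothness of $|x|^{-\sigma}$ and $|x|^{\sigma}V$ (Theorems \ref{theorem_1}--\ref{theorem_2}), and close the Duhamel expansion using the free estimates of Lemmas \ref{lemma_section_3_1}--\ref{lemma_section_3_2} together with Christ--Kiselev, which is available because every time exponent is strictly greater than $2$. That is exactly how the paper invokes Lemma \ref{lemma_section_3_3}(2) for the non-endpoint cases and the spherically averaged case.

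The third bullet is where your argument has a genuine gap. You propose to handle the retarded endpoint $p_1=p_2=2$ ``in the spectral representation'' by Plancherel in $t$, reducing it to uniform-in-$\lambda$ boundary resolvent bounds. This cannot work in the required generality: Plancherel in $t$ identifies $\|f\|_{L^2_tL^q_x}$ with $\|\hat f\|_{L^2_\lambda L^q_x}$ only when $q=2$; for the exponents at stake ($q_1>2$ on the target, $q_2'<2$ on the source) Minkowski's inequality runs in the wrong direction on both sides, so a pointwise-in-$\lambda$ resolvent bound does not yield the needed $L^2_t$ estimates. Consequently the crucial mixed retarded inputs of the perturbation scheme, namely $\|W_j\Gamma_{H_0}F\|_{L^2_tL^2_x}\lesssim\|F\|_{L^2_t\X}$ and $\|\Gamma_{H_0}W_1F\|_{L^2_t\Y}\lesssim\|F\|_{L^2_tL^2_x}$, are not obtainable by your route. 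What the paper actually uses, and what cannot be bypassed, is the \emph{retarded} double-endpoint inhomogeneous free Strichartz estimate for radial data, \eqref{lemma_section_3_2_2} from \cite{GLNY}. Specialized to $q_1$ or $q_2=2^*(\sigma)$ (legitimate precisely because $\sigma>n/(2n-1)\Leftrightarrow 2^*(\sigma)>(4n-2)/(2n-3)$), it combines via H\"older with the $H$-supersmoothness of $|x|^{-\sigma}$ inside the endpoint perturbation chain of Lemma \ref{lemma_section_3_3}(1) (the \cite{BoMi} scheme); no Christ--Kiselev appears because \eqref{lemma_section_3_2_2} already carries the retarded truncation, and no passage through the spectral variable is made. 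Your characterization of the \cite{BoMi} analysis as living ``purely in the spectral variable'' is not accurate for the Strichartz part. Two smaller points: for radial $f$ and $q\ge2$ the inequality is $\|f\|_{L^q}\lesssim\|f\|_{B[\mathcal L^q_rL^2_\omega]}$, the reverse of the embedding you state; and since \eqref{theorem_4_1} excludes $p_j=2$, estimate \eqref{theorem_4_3} is not a formal endpoint limit of \eqref{theorem_4_2} and has to be established as a separate statement.
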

\vskip0.3cm
Theorems \ref{theorem_3} and \ref{theorem_4} are well known in the free case $H=(-\Delta)^\sigma$ (see \cite{KPV}, \cite{KeTa}, \cite{Pau}, \cite{Guo} and \cite{GLNY}). Moreover, {\it improved Strichartz estimates} such as \eqref{theorem_3_2}, \eqref{theorem_4_2} and \eqref{theorem_4_3} (in the free case) have played an important role in the study of higher-order and fractional NLS equations (see e.g. \cite{Pau}, \cite{Guo}, \cite{GLNY} and references therein). On the other hand, there are very few previous literatures on such improved Strichartz estimates for the case with potentials $V(x)$ (see a recent preprint \cite{GuNa} for the second order case). We therefore believe that Theorems \ref{theorem_3} and \ref{theorem_4}, as well as the method of their proofs, have many potential applications to such NLS type equations with potentials (see e.g. \cite{ZZ} and  \cite{KMVZZ} for related works on NLS equations with the Hardy potential). %We also refer to our subsequent paper \cite{MiWangYao}, where Theorems \ref{theorem_3} and \ref{theorem_4} will be applied to establish the global well-posedness and the scattering for higher-order or fractional NLS equations with Hardy potentials).
\vskip0.3cm
\begin{remark}
\label{remark_theorem_4_1}
We here make more specific comments on Theorems \ref{theorem_3} and \ref{theorem_4}.
%remark

\vskip0.2cm
 (i) If $\psi_0,F$ and $V$ are radially symmetric, then the Besov-type spaces $B[\mathcal L^{q_1}_rL^2_\omega]$ and $B[\mathcal L^{q_2'}_rL^2_\omega]$ in \eqref{theorem_4_2} can be replaced by $L^{q_1}$ and $L^{q_2'}$. Indeed, in such a case, the solution $\psi$ to \eqref{Cauchy_2} is also radially symmetric. Moreover, if $f$ is radially symmetric, then $\|f\|_{\mathcal L^p_rL^2_\omega}\sim \|f\|_{L^p}$, and hence $\norm{f}_{B[\mathcal L^{p}_rL^2_\omega]}\lesssim\norm{f}_{L^p}$ for $1<p\le2$ and $\norm{f}_{B[\mathcal L^{p}_rL^2_\omega]}\gtrsim\norm{f}_{L^p}$ for $2\le p<\infty$ by the standard square function estimates for the Littlewood--Paley decomposition.
%\end{remark}
%\vskip0.3cm
%%remark
%\begin{remark}
%\label{remark_theorem_4_2}
\vskip0.2cm
(ii) All of \eqref{theorem_3_1}, \eqref{theorem_3_2}, \eqref{theorem_4_2} and \eqref{theorem_4_3} have the same scaling symmetry. Namely, if $(\tilde p,\tilde q)$ is $n/(2\sigma)$-admissible, $(p,q)$ is $n/2$-admissible, then all of the four norms:
 $$\norm{f}_{L^{\tilde p}_tL^{\tilde q}_x}, \ \norm{|D|^{{2(\sigma-1)}/{p}}f}_{L^p_tL^{q}_x},\ \norm{|D|^{s(2,q_1)}f}_{L^2_t L^{q_1}_x} \ {\rm and }\ \norm{|D|^{s(p_1,q_1)} f}_{L^{p_1}_tB[\mathcal L^{q_1}_rL^2_\omega]}$$ have the same scaling structure under the map $$f(t,x)\mapsto f_\lambda(t,x)=f(\lambda^{2\sigma} t,\lambda x),\ \ \lambda>0.$$
%\end{remark}
%
%%remark\begin{remark}One can take $q_1=q_2=2^*(\sigma)$ in the last statement of Theorem \ref{theorem_4} in which case $s(2,2^*(\sigma))=0$ and \eqref{theorem_4_3} becomes$$\norm{\psi}_{L^2_t\mathcal L^{2^*(\sigma)}_rL^2_\omega}\lesssim \norm{\psi_0}_{L^2_x}+\norm{F}_{L^2_t\mathcal L^{2_*(\sigma)}_rL^2_\omega}.$$This particularly shows that if $n\ge3$ and $\sigma>n/(2n-1)$ then the standard Strichartz estimates \eqref{theorem_3_1} hold for any radially symmetric data $\psi_0\in \mathcal L_r^2$ and $F\in L^{\tilde r'}_t\mathcal L^{\tilde s'}_r$. \end{remark}
%\begin{remark}
\vskip0.2cm
(iii) Let $\psi_0,F$ and $V$ be radially symmetric.
By virtue of \eqref{Sobolev_2}, (i) and (ii) in Remarks \ref{remark_theorem_4_1}, the estimates \eqref{theorem_4_2} and \eqref{theorem_4_3} imply the estimate \eqref{theorem_3_2} in the fractional case $\sigma<1$. Furthermore, {\it the interest of \eqref{theorem_4_2} and \eqref{theorem_4_3} is that if $\sigma>n/(2n-1)$ then one can choose exponents $p_1,p_2,q_1,q_2$ in Theorem \ref{theorem_4} in such a way that $s_1,s_2,s(2,q_1),s(2,q_2)>0$}. In particular, in such a case, \eqref{theorem_4_2} and \eqref{theorem_4_3}  imply the standard Strichartz estimates  \eqref{theorem_3_1} for $n/(2\sigma)$-admissible pairs (and radially symmetric  $\psi_0,F,V$). Therefore, Theorem \ref{theorem_4} shows that there is also an additional smoothing effect even for the fractional case if radial symmetry is assumed (or more generally, one takes the spherical average). Finally, {\it the estimate \eqref{theorem_4_4} provides a new result for the second order case $\sigma=1$} (under the radial symmetry), which is stronger than the usual endpoint Strichartz estimate with $s=0$.
%Indeed, \eqref{theorem_4_3} with $\sigma=1$ implies
%\begin{align}
%\label{theorem_4_4}
%\norm{|D|^{s}\psi}_{L^2_tL^{\frac{2n}{n-2+2s}}_x}\lesssim \norm{\psi_0}_{L^2_x}+\norm{|D|^{-s}F}_{L^2_tL^{\frac{2n}{n+2-2s}}_x}
%\end{align}
%for $0<s<(n-1)/(2n-1)$ and radially symmetric data and potentials,
\end{remark}

\vskip0.1cm
The next result is the uniform Sobolev estimates of Kenig--Ruiz--Sogge type \cite{KRS}.
%theorem0
\begin{theorem}	
\label{theorem_5}
Let $n\ge3$, $n/(n+1)\le \sigma<n/2$, $H=(-\Delta)^\sigma+V$, $V$ satisfy Assumption \ref{assumption_A} and $|x|^{\sigma}V\in L^{n/\sigma,\infty}(\R^n)$. Then, for any $2n/(n+2\sigma)\le p\le 2(n+1)/(n+3)$,
\begin{align}
\label{theorem_5_1}
\norm{(H-z)^{-1}f}_{L^{p'}}\lesssim |z|^{\frac{n}{\sigma}(\frac{1}{p}-\frac{1}{2})-1}\norm{f}_{L^{p}},\quad f\in L^p,\quad z\in \C\setminus[0,\infty).
\end{align}
In particular, the following uniform estimate of Sobolev type holds:
\begin{align}
\label{theorem_5_2}
\norm{u}_{L^{\frac{2n}{n-2\sigma}}}\lesssim \norm{(H-z)u}_{L^{\frac{2n}{n+2\sigma}}},\quad u\in C_0^\infty(\R^n\setminus\{0\}),\ z\in \C.
\end{align}
Moreover, if we assume in addition \eqref{corollary_1_1}, then \eqref{theorem_5_1} also holds for $z>0$ in which case $(H-z)^{-1}$ may be taken to be the outgoing or incoming resolvent $(H-z\mp i0)^{-1}$.
\end{theorem}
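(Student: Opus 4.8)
The plan is to deduce \eqref{theorem_5_1} from the Kenig--Ruiz--Sogge estimate for the free operator $H_0=(-\Delta)^\sigma$ by a perturbative argument, in which the scaling-critical singularity of $V$ is absorbed by the weighted resolvent bound of Theorem~\ref{theorem_1}. First I would reduce to the case $|z|=1$: the unitary dilations $U_\lambda f(x)=\lambda^{n/2}f(\lambda x)$ conjugate $H_0$ and the homogeneous factors $|x|^{a}$, $|D|^{b}$ covariantly, while both the constants in Assumption~\ref{assumption_A} and the quantity $\norm{|x|^\sigma V}_{L^{n/\sigma,\infty}}$ are invariant under the rescaling $V\mapsto V_\lambda(x):=\lambda^{2\sigma}V(\lambda x)$. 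Since the power $\tfrac n\sigma(\tfrac1p-\tfrac12)-1$ in \eqref{theorem_5_1} is exactly the one dictated by this scaling, it suffices to prove
$$
\sup_{z\notin[0,\infty),\ |z|=1}\norm{(H-z)^{-1}}_{L^p\to L^{p'}}<\infty
$$
with a constant depending only on $n,\sigma,p$, on $\norm{|x|^\sigma V}_{L^{n/\sigma,\infty}}$ and on the constants in Assumption~\ref{assumption_A}; conjugating by a suitable $U_\lambda$ then gives \eqref{theorem_5_1} for general $z$.

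Next I would iterate the resolvent identity twice,
$$
(H-z)^{-1}=(H_0-z)^{-1}-(H_0-z)^{-1}V(H_0-z)^{-1}+(H_0-z)^{-1}V(H-z)^{-1}V(H_0-z)^{-1},
$$
and bound the three terms in the $L^p\to L^{p'}$ operator norm separately. The first term is precisely the uniform Sobolev estimate for $(-\Delta)^\sigma$, available for the stated ranges of $n,\sigma,p$ from \cite{KRS,Gut,HYZ,SYY,Cue} (this is where the optimality of the ranges of $\sigma$ and $p$ enters). For the other two terms I would factor $V=\sgn(V)\,w\cdot w$ with $w=|V|^{1/2}$, observing that under either of our hypotheses ($|x|^\sigma V\in L^{n/\sigma,\infty}$ or $|x|^{2\sigma}V\in L^\infty$) one has $w\in L^{n/\sigma,\infty}(\R^n)$. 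By O'Neil's inequality \eqref{Holder} together with the Lorentz-refined versions of \eqref{Sobolev} and of the free uniform Sobolev and free Kato--Yajima estimates, the operators $w(H_0-z)^{-1}$ and its adjoint $(H_0-z)^{-1}w$ are bounded, respectively, from $L^p$ to $L^2$ and from $L^2$ to $L^{p'}$ with norms $\lesssim1$ when $|z|=1$; this is where the hypothesis $\sigma\ge n/(n+1)$, in particular $\sigma>1/2$, is used. The middle factor is controlled by Theorem~\ref{theorem_1}: choosing $\sigma-n/2<\gamma<\sigma-1/2$ and writing $(H-z)^{-1}=|x|^{\sigma-\gamma}|D|^{-\gamma}K_z|D|^{-\gamma}|x|^{\sigma-\gamma}$ with $\sup_z\norm{K_z}_{L^2\to L^2}<\infty$ by \eqref{theorem_1_1}, one reduces the $L^2$-boundedness of $w(H-z)^{-1}w$ to that of $w|x|^{\sigma-\gamma}|D|^{-\gamma}$; when $|x|^{2\sigma}V\in L^\infty$ (so $w\lesssim|x|^{-\sigma}$) this follows from $\norm{|x|^{-\gamma}f}_{L^2}\lesssim\norm{|D|^\gamma f}_{L^2}$ (Hardy's inequality \eqref{Hardy} with $\sigma$ replaced by $\gamma$, $0\le\gamma<\sigma-1/2$), and in general from a Stein--Weiss inequality in Lorentz spaces using $w\in L^{n/\sigma,\infty}$. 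Combining the three estimates and keeping track of the scaling yields the bound at $|z|=1$, hence \eqref{theorem_5_1}.

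The estimate \eqref{theorem_5_2} is the endpoint $p=2n/(n+2\sigma)$ of \eqref{theorem_5_1}, for which the power of $|z|$ vanishes; since the bound is then uniform down to $z=0$, applying it to $u=(H-z)^{-1}f$ and letting $z\to0$ (along $\C\setminus[0,\infty)$) gives \eqref{theorem_5_2} for $u\in C_0^\infty(\R^n\setminus\{0\})$, and letting $z\to\lambda$ for a fixed such $u$ covers all $z\in\C$. Finally, if \eqref{corollary_1_1} also holds, then Corollary~\ref{corollary_1} supplies the limiting absorption principle, so that $(H-\lambda\mp i0)^{-1}$ exists in the appropriate weighted spaces and \eqref{theorem_5_1} extends to $z=\lambda>0$ by passing to the boundary values.

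The main obstacle is the control of the critically singular potential term, i.e.\ reconciling the multiplicative weight $w\sim|V|^{1/2}$ — which for a general potential with $|x|^\sigma V\in L^{n/\sigma,\infty}$ behaves like $|x|^{-\sigma}$ only in an averaged, weak-$L^{n/\sigma}$ sense — with the sharp admissible range $\sigma-n/2<\gamma<\sigma-1/2$ of power-weight exponents allowed in Theorem~\ref{theorem_1}. Carrying out the Lorentz-space Hölder and Sobolev/Stein--Weiss bookkeeping at these exact endpoints, where there is no room to spare and, near $z\in[0,\infty)$, no decay of the free resolvent kernel to exploit, is the technical heart; it is precisely here that the uniformity of Theorem~\ref{theorem_1} up to the spectrum $[0,\infty)$ — equivalently, the absence of eigenvalues and resonances recorded in Remark~\ref{remark_theorem_1_1} — is indispensable.
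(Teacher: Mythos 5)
Your overall strategy — expand the resolvent twice via the second resolvent identity, bound the two outer factors using the free uniform Sobolev estimate in Lorentz spaces and O'Neil's inequality, and control the middle factor $(H-z)^{-1}$ by Theorem~\ref{theorem_1} — is the right one, and matches the scheme the paper abstracts in Lemma~\ref{lemma_section_4_2}. However, the symmetric factorization $V=\sgn(V)\,w\cdot w$ with $w=|V|^{1/2}$ creates a genuine gap that you correctly identify but do not resolve. Under the stated hypothesis $|x|^\sigma V\in L^{n/\sigma,\infty}$ (not $|x|^{2\sigma}V\in L^\infty$), the crucial middle bound $\norm{w(H-z)^{-1}w}_{L^2\to L^2}\lesssim1$ does not follow from Theorem~\ref{theorem_1}. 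Your proposed reduction to $w|D|^{-\gamma}|x|^{\sigma-\gamma}\in\mathbb B(L^2)$ (note the factors in your displayed formula $(H-z)^{-1}=|x|^{\sigma-\gamma}|D|^{-\gamma}K_z|D|^{-\gamma}|x|^{\sigma-\gamma}$ are in the wrong order: from \eqref{theorem_1_1} one gets $(H-z)^{-1}=|D|^{-\gamma}|x|^{\sigma-\gamma}K_z|x|^{\sigma-\gamma}|D|^{-\gamma}$) cannot be salvaged by a Stein--Weiss inequality: for a general $w\in L^{n/\sigma,\infty}$ that is not pointwise dominated by $|x|^{-\sigma}$ — e.g.\ a $w$ supported in unit balls centered at points $|x_k|\to\infty$ — the growing factor $|x|^{\sigma-\gamma}$ ($\gamma<\sigma-1/2<\sigma$) makes $w\,|D|^{-\gamma}|x|^{\sigma-\gamma}$ unbounded on $L^2$. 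Theorem~\ref{theorem_1} gives supersmoothness only for the specific weights $|x|^{-\sigma+\gamma}|D|^\gamma$, and there is no room to trade in an arbitrary weak-$L^{n/\sigma}$ weight.

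The paper sidesteps this entirely by choosing the \emph{asymmetric} factorization $V=W_1W_2$ with $W_1=|x|^{\sigma}V$ and $W_2=|x|^{-\sigma}$. Then $W_2(H-z)^{-1}W_2=|x|^{-\sigma}(H-z)^{-1}|x|^{-\sigma}$ is bounded on $L^2$ \emph{directly} by \eqref{theorem_1_2} (the $\gamma=0$ case, using $\sigma>1/2$), with no extra argument, while the hypothesis $|x|^\sigma V\in L^{n/\sigma,\infty}$ is spent exclusively on the factors $W_1R_{H_0}(z)$, $W_2R_{H_0}(z)$, $R_{H_0}(z)W_1$, where Lorentz–Hölder together with the Lorentz-refined Lemma~\ref{lemma_section_4_1} suffices. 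Your argument would be correct as written under the stronger assumption $|x|^{2\sigma}V\in L^\infty$ (since then $w\lesssim|x|^{-\sigma}$), but to cover the actual hypothesis of Theorem~\ref{theorem_5} you must adopt the asymmetric factorization. The remaining points — the $|z|$-scaling reduction, using the free estimates of \cite{HYZ} as input, deriving \eqref{theorem_5_2} at the scale-invariant endpoint $p=2n/(n+2\sigma)$ by sending $\Im z\to0$, and invoking Corollary~\ref{corollary_1} for $z>0$ — are all sound and in line with the paper (the paper dispenses with the rescaling and writes the proof directly for general $z$, and for \eqref{theorem_5_2} it fixes $u\in C_0^\infty(\R^n\setminus\{0\})$ and applies the bound to $z+i\ep$, absorbing the $\ep(H-z-i\ep)^{-1}u$ term; your sketch of this step is somewhat garbled but the idea is right).
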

\vskip0.3cm
It is worth  noticing that the boundary resolvents  $(H-\lambda\mp i0)^{-1}$ are closely connected with the spectral density  $dE_H(\lambda)$ of $H$ by the following Stone formula:
$$dE_H(\lambda)=(2\pi i)^{-1}\big((H-\lambda +i0)^{-1}-(H-\lambda-i0)^{-1}\big).$$
Hence, as a consequence of \eqref{theorem_5_1}, the following spectral measure estimate holds:
\begin{align}
\label{spect-measure}
\norm{dE_H(\lambda)}_{L^{\frac{2(n+1)}{n+3}}\to L^{\frac{2(n+1)}{n-1}}}\lesssim \lambda^{\frac{n}{\sigma(n+1)}-1}, \quad \lambda>0,
\end{align}
which can be interestingly used to establish the $L^p$-bounds of Mikhlin-H\"ormander type for the spectral multiplier $\psi(H)$ (see e.g. \cite{HYZ}, \cite{SYY}). Moreover, we remark that the spectral measure estimate \eqref{spect-measure} is actually equivalent to the famous restriction estimates of Stein-Tomas in the free case $H=(-\Delta)^\sigma$. So  the estimate \eqref{spect-measure} is also called by {\it Restriction type estimate associated with $H$}. It is also seen from this equivalence that the condition $\sigma\ge n/(n+1)$ is optimal in the sense that \eqref{theorem_5_1} fails if $\sigma<n/(n+1)$ (see \cite{HYZ}).
\vskip0.3cm
%remark
\begin{remark}
\label{remark_theorem_5}
When $\sigma\ge1$, the following estimate also holds:
\begin{align}
\label{theorem_5_3}
\norm{|D|^{\sigma-1}u}_{L^{\frac{2n}{n-2}}}\lesssim \norm{|D|^{1-\sigma}(H-z)u}_{L^{\frac{2n}{n+2}}},\quad z\in \C.
\end{align}
By virtue of \eqref{Sobolev_2}, this estimate is stronger than \eqref{theorem_5_2} if $\sigma>1$.
\end{remark}

\subsection{Further comments}
We here provides several further remarks on all theorems above, especially the optimality of the results for the operator $(-\Delta)^{\sigma}+a|x|^{-2\sigma}$.

%remark
\begin{remark}[{{\it The second order case $\sigma=1$}}]As explained above, these theorems extend the results for $\sigma=1$ proved by \cite{BPST1,BPST2} and \cite{BoMi} to the higher-order and fractional cases. Moreover, radial-improved Strichartz estimates \eqref{theorem_4_4} are new even for the case $\sigma=1$.
%Although the case $\sigma=1$ is not included in Theorems \ref{theorem_1}--\ref{theorem_5} in most cases, all of proofs in the paper also work as well for $\sigma=1$.
\end{remark}

%remark
\begin{remark}[{{\it The condition $1/2<\sigma<n/2$\,}}]
The condition $\sigma<n/2$ is mainly due to the following two points:

Firstly, in the definition of $H$ as well as the proof of the main theorems, we frequently use Hardy's inequality \eqref{Hardy} or Sobolev's inequality of the form
$$\norm{f}_{L^{\frac{2n}{n-2\sigma},2}(\R^n)}\lesssim \norm{|D|^\sigma f}_{L^2(\R^n)},
$$ both of which hold for all $f\in C_0^\infty(\R^n)$ if and only if $0\le\sigma<n/2$ (note that $|x|^{-2\sigma}\notin L^1_{\loc}(\R^n)$ if $\sigma\ge n/2$). If $\sigma\ge n/2$, one can still define $H$, at least in case of $V=a|x|^{-2\sigma}$ with $a>0$, as the Friedrichs extension of $Q_H$ defined on $C_0^\infty(\R^n\setminus\{0\})$. However, in such a case, the form domain of $H$ is strictly larger than $\H^{\sigma}$. Moreover, the self-adjoint extension of $(-\Delta)^\sigma|_{C_0^\infty(\R^n\setminus\{0\})}$ is possibly different from the usual one with domain $\H^{2\sigma}$ (see \cite[Theorem X.11]{ReSi}). These actually cause several crucial difficulties when considering the problems with Hardy potential if $\sigma\ge n/2$.
\vskip0.1cm
Secondly, if $\sigma\ge n/2$, none of the uniform resolvent estimate \eqref{theorem_1_2} with $\gamma=0$, Kato smoothing estimate \eqref{theorem_2_2} with $\gamma=0$ and the endpoint Strichartz estimates \eqref{theorem_3_1} with $p$ or $\tilde p=2$ holds even for the free case $H=H_0$. At a technical level, due to this absence of the estimates for $H_0$, the perturbation argument used in Sections \ref{section_3} and \ref{section_4} below breaks down. On the other hand, the condition $\sigma>1/2$ for Strichartz and uniform Sobolev estimates is due to the use of \eqref{theorem_2_2} in the proofs. Although it is a very important problem in view of applications to nonlinear equations, the validity of Strichartz estimates for the relativistic Schr\"odinger operator with Coulomb potentials $|D|+a|x|^{-1}$ (or more interestingly, Strichartz estimates for the Dirac operator $-i\alpha\cdot\nabla+a|x|^{-1}$) still remains open even if $a$ is sufficiently small (see \cite{EGG} and references therein for the case with short-range potentials $V(x)=O(\<x\>^{-1-\ep})$).
\end{remark}
\begin{remark}[{{\it Optimality\,}}]
\label{remark_theorems}Here we discuss the optimality of the several conditions on potentials $V$ in Theorems \ref{theorem_3} and \ref{theorem_4} in case of the Hardy potential.
\vskip0.2cm
(i) {\it The decay rate}. For the case $\sigma=1$, \cite{GVV} found a class of repulsive potentials, which  decay slower than $|x|^{-2}$ and are non-negative, but neither positive everywhere nor radially symmetric, such that Strichartz estimates cannot hold except for the trivial $L^\infty_tL^2_x$ estimate. On the other hand, for higher-order Schr\"odinger operators $H=(-\Delta)^m+V$ with  $m\in \N$, $m\ge2$ and $V=O(\<x\>^{-\beta})$ with some large $\beta>2m$, \cite{FSWY} has proved Strichartz estimates provided that $H$ has neither non-negative eigenvalues nor zero resonance (see \cite{RoSc} for the case $m=1$). We also refer to our subsequent work \cite{MiYa2} which will establish basically the same results as in the present work for the case with $m\in \N$, $m\ge2$ and arbitrarily $\beta>2m$. It follows from these observations that the decay rate of our potential $V$, say $V=O(\<x\>^{-2\sigma})$, is essentially critical for the validity of Strichartz estimates.
\vskip0.2cm
(ii) {\it The singularity and the lower bound of coupling constant}. The singularity of $V$ at the origin and the condition \eqref{assumption_A_1} are also critical as follows. Let us consider the case $H=(-\Delta)^\sigma+a|x|^{-\gamma}$ for simplicity. On one hand, if either $a<0$ and $\gamma>2\sigma$ or $a<-C_{\sigma,n}$ and $\gamma=2\sigma$, then due to the optimality of Hardy's inequality \eqref{Hardy}, $H$ is not bounded from below and any its self-adjoint extension may have infinitely many (possibly embedded) eigenvalues which prevents any kind of global estimates (except for the conservation laws). On the other hand, for $\sigma=1$ and $V=-C_{1,n}|x|^{-2}$, it is known (see \cite[Remark 3.6]{Mizutani_JST}) that \eqref{theorem_1_2} cannot hold even if the weight $|x|^{-1}$ is replaced by $\chi\in C_0^\infty(\R^n)$. In such a case, the endpoint Strichartz estimate \eqref{theorem_3_1} with $p$ or $\tilde p=2$ can also fail as shown by \cite{Mizutani_JDE}. In Section \ref{section_6} below, we will discuss the operator $(-\Delta)^\sigma-C_{\sigma,n}|x|^{-\gamma}$ in more details.
\end{remark}
%\vskip0.2cm
\begin{remark}[{{\it Some open problems\,}}]
As a consequence of above remarks, the Hardy potential $a|x|^{-2\sigma}$ with $a>-C_{\sigma,n}$ is critical for the validity of the above theorems, especially Theorems \ref{theorem_3}, \ref{theorem_4} and \ref{theorem_5}, in terms of the decay rate at infinity, the singularity at the origin and the lower bound of the coupling constant $a$. We however note that there is still a hope to obtain some of the above results, even in the case when $V=-C_{\sigma,n}|x|^{-2\sigma}$ or $V$ is slowly decaying. On one hand, \cite{Mizutani_JDE} proved a weak-type endpoint Strichartz estimate for $H=-\Delta-C_{1,n}|x|^{-2}$, which particularly implies non-endpoint Strichartz estimates with $p,\tilde p>2$ (see Section \ref{section_6} below for more details on the critical case $a=-C_{\sigma,n}$). On the other hand, \cite{Mizutani_JFA} recently showed that  the operator $-\Delta+\<x\>^{-\mu}$ satisfies Strichartz estimates for all $\mu\in (0,2)$ (including the long-range case $0<\mu<1$). It would be interesting to investigate if similar results hold for the higher-order or fractional cases. The  validity of Strichartz estimates for $H=(-\Delta)^{\sigma}+a|x|^{-2\sigma}$ with $0<\sigma\le1/2$ and $a\neq0$, which is completely open, would be also an interesting and important problem.
\end{remark}

\subsection{Outline of the paper}
Here we outline the ideas of proofs of the above theorems, as well as describe the organization of the rest of the paper.
\vskip0.3cm
{\it Section \ref{section_2}} deals with uniform resolvent and Kato smoothing estimates.
The proof of Theorem \ref{theorem_1} is based on a version of Mourre's theory which is similar to the argument used by Hoshiro \cite{Hos} (see also the original work by Mourre \cite{Mou}). We compute the first and second commutators of $H$ with $iA=(x\cdot\nabla+\nabla\cdot x)/2$, which are given by
\begin{align*}
S_1:=[H,iA]=H_1+V_1,\quad
S_2:=[[H,iA],iA]=H_2+V_2.
\end{align*}
By \eqref{assumption_A_2} and \eqref{assumption_A_3}, the following estimates (in the sense of forms) hold:
$$
S_1\ge S_1^{1/2}S_1^{1/2}\gtrsim |D|^\sigma |D|^\sigma\ge0,\quad -S_1\lesssim S_2\lesssim S_1,
$$
without any spectral localization or compact error term. Then, for $\delta>0$ and $1/2<s<1$, we apply the differential inequality technique by Mourre \cite{Mou,Mourre_CMP} to the following operator
$$
F_\ep(z)=\<A\>^{-s}\<\ep A\>^{-1+s}|D|^\sigma e^{-\delta|D|}(H-z-i\ep S_1)^{-1}e^{-\delta|D|} |D|^\sigma \<\ep A\>^{-1+s}\<A\>^{-s},\ \ep\in (0,1],
$$
and obtain the uniform boundedness of $\norm{F_\ep(z)}$ in $\ep$, $\delta$ and $z$. As $\ep\searrow0$, one has
$$
\sup_{z\in \C\setminus\R,\,\delta>0}\norm{\<A\>^{-s}|D|^\sigma e^{-\delta|D|}(H-z)^{-1}e^{-\delta|D|} |D|^\sigma \<A\>^{-s}}_{L^2\to L^2}<\infty.
$$
When $\sigma-n/2<\gamma\le \sigma-1$, it follows from Hardy's inequality \eqref{Hardy} that $|x|^{-\sigma+\gamma}|D|^{\gamma-\sigma}\<A\>$ is bounded on $L^2$. Moreover, if $\sigma-1<\gamma<\sigma-1/2$, one can also show by \eqref{Hardy} and an interpolation technique that $|x|^{-\sigma+\gamma}|D|^{\gamma-\sigma}\<A\>^{\sigma-\gamma}$  is bounded on $L^2$. Therefore, in both cases, the weight $\<A\>^{-s}|D|^\sigma$ can be replaced by $|x|^{-\sigma+\gamma}|D|^{\gamma}$. Taking the limit $\delta\searrow 0$, we therefore obtain the desired estimate  \eqref{theorem_1_2}.
\vskip0.1cm
Regarding Corollary \ref{corollary_1}, the existence of $\<A\>^{-s}(H-\lambda\mp i0)^{-1}\<A\>^{-s}$ is a direct consequence of Assumption \ref{assumption_A}, \eqref{corollary_1_1} and the original Mourre theory, while its uniform estimate \eqref{corollary_1_2} follows from Theorem \ref{theorem_1} and a limiting argument.
\vskip0.1cm
Once Theorem \ref{theorem_1} is verified, Theorems \ref{theorem_2} follows from an abstract smooth perturbation theory by Kato \cite{Kat}. Moreover, \eqref{theorem_1_1} is in fact equivalent to \eqref{theorem_2_1}.
\vskip0.3cm
{\it Section \ref{section_3}} is devoted to studying Strichartz estimates. The proof of Theorem \ref{theorem_3} relies on a perturbation method by Rodnianski--Schlag \cite{RoSc} (see also  \cite{BPST2}, \cite{BoMi}). Let $U_H,\Gamma_H$ be homogeneous and inhomogeneous Schr\"odinger propagators defined by
\begin{align*}
U_Hf=e^{-itH}f,\ \Gamma_HF=\int_0^t e^{-i(t-s)H}F(s)ds,
\end{align*}
which satisfy the following Duhamel formulas
$$
U_H=U_{H_0}+i\Gamma_{H_0}VU_H,\quad \Gamma_H=\Gamma_{H_0}+i\Gamma_{H_0}V\Gamma_{H}=\Gamma_{H_0}+i\Gamma_{H}V\Gamma_{H_0}.
$$
Thanks to these formulas, O'Neil's inequality \eqref{Holder}  and Sobolev's inequality \eqref{Sobolev_2}, \eqref{theorem_3_2} in Theorem \ref{theorem_3} follows from \eqref{theorem_1_2} and the same Strichartz estimates as \eqref{theorem_3_2} for $U_{H_0}$ and $\Gamma_{H_0}$. In particular, the following estimate will play an essential role:
\begin{align}
\label{outline_1}
\norm{|D|^{2(\sigma-1)/p}\Gamma_{H_0}|x|^\sigma VF}_{L^p_tL^{q,2}_x}\lesssim |\norm{|x|^\sigma VF}_{L^2_tL^{\frac{2n}{n-2\sigma},2}_x}\lesssim \norm{|x|^\sigma V}_{L^{\frac n\sigma,\infty}}\norm{F}_{L^2_tL^{2}_x}.
\end{align}
\vskip0.1cm
As for Theorem \ref{theorem_4}, since Sobolev's inequality \eqref{Sobolev_2} cannot hold if $\sigma<1$, we do not know if \eqref{outline_1} holds. Instead, with the aid of Christ--Kiselev's lemma \cite{ChKi} (see Appendix \ref{appendix_misc}), we use \eqref{theorem_2_2} with $V\equiv0$, Strichartz estimates for $U_{H_0}$ and its dual estimate to obtain
\begin{align}
\label{outline_2}
\norm{|D|^{2(\sigma-1)/p}\Gamma_{H_0}|x|^\sigma VF}_{L^p_tL^{q}_x}\lesssim \norm{|x|^{2\sigma} V}_{L^{\infty}}\norm{F}_{L^2_tL^{2}_x}
\end{align}
for all non-endpoint $n/2$-admissible pair $(p,q)$. Then the first half of Theorem \ref{theorem_4} can be verified by using \eqref{outline_2} and a similar perturbation method as for Theorem \ref{theorem_3}.  The last half of Theorem \ref{theorem_4} is also obtained by using the same argument as for Theorem \ref{theorem_3} and improved Strichartz estimates for $U_{H_0}$ and $\Gamma_{H_0}$ instead of \eqref{outline_1} or \eqref{theorem_3_2}.
\vskip0.3cm
{\it Section \ref{section_4}} deals with uniform Sobolev estimates. Let $R_{H}(z)=(H-z)^{-1}$. With Theorem \ref{theorem_1} at hand, we can see that \eqref{theorem_5_1} follows from uniform Sobolev estimates for the free resolvent $R_{H_0}(z)$ obtained by \cite{HYZ}, via the second resolvent equations
$$
R_H(z)=R_{H_0}(z)-R_{H_0}(z)VR_H(z)=R_{H_0}(z)-R_{H}(z)VR_{H_0}(z).
$$
\eqref{theorem_5_2} is an immediate consequence of \eqref{theorem_5_1} and Corollary \ref{corollary_1}. Moreover, if $\sigma>1$, one can obtain \eqref{theorem_5_3} (which are stronger than \eqref{theorem_5_2} if $\sigma>1$) by plugging the function $u=e^{izt}f$ to the double endpoint Strichartz estimate \eqref{theorem_3_2} with $p=\tilde p=2$.
\vskip0.3cm
{\it Section \ref{section_5}}  is devoted to a generalization of the above theorems to two kinds of dispersive operators. The first one is the operator $H=P_0(D)+V$ with a class of inhomogeneous elliptic operators of the form $P_0(D)=\sum_{j=1}^J (-\Delta)^{\sigma_j}$. The second one is the Schr\"odinger operator with variable coefficients of the form $H=-\nabla \cdot G_0(x)\nabla +V(x)$ with $G_0$ being a small long-range perturbation of the identity matrix.
\vskip0.3cm
{\it Section \ref{section_6}} is concerned with the critical constant case $H=(-\Delta)^\sigma-C_{\sigma,n}|x|^{-2\sigma}$. We show that the same results as in the subcritical case $a>-C_{\sigma,n}$ still hold for functions orthogonal to all radial functions. For the radial case, we also prove some new results for a critical operator related with $(-\Delta)^\sigma-C_{\sigma,n}|x|^{-2\sigma}$ and discuss a few known results and open problems for $(-\Delta)^\sigma-C_{\sigma,n}|x|^{-2\sigma}$.
\vskip0.3cm
{\it Appendices} consist of the following contents: the proof of Strichartz estimates for the free evolution (Appendix \ref{appendix_A}); the proof of Example \ref{example_1} (Appendix \ref{appendix_example}); several supplementary materials from Harmonic Analysis used in the paper, including Lorentz spaces, real interpolation spaces, Sobolev's inequality and Christ--Kiselev's lemma (Appendix \ref{appendix_misc}).

\vskip0.3cm
%%%%%%%%%%%%%%%%
\section{Uniform resolvent and Kato smoothing estimates}
\label{section_2}
In this section,  we prove Theorem \ref{theorem_1}, Corollary \ref{corollary_1} and Theorem \ref{theorem_2}. To these purposes, we set
\begin{align}
\label{dilation}
A =\frac{1}{2i}(x\cdot\nabla+\nabla\cdot x)=\frac{1}{i}x\cdot\nabla +\frac{n}{2i},
\end{align}
which is the self-adjoint generator of the dilation unitary group $e^{itA}f(x)=e^{nt/2}f(e^tx)$ on $L^2(\R^n)$. Then we have the following useful lemma.

%{lemma}
\begin{lemma}
\label{lemma_section_2_1}
Suppose $\varphi$ and $\xi\cdot \nabla_\xi\varphi$ belong to $L^1_{\loc}(\R^n)\cap \S'(\R^n)$ and set $\tilde \varphi(\xi)=\xi\cdot \nabla_\xi\varphi$. Then one has $$[\varphi(D),iA]f=\tilde \varphi(D)f,\quad f\in C_0^\infty(\R^n).$$
\end{lemma}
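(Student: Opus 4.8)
The plan is to exploit the fact that $A$ generates the dilation group, under which Fourier multipliers transform in an elementary way. Recalling $iA=x\cdot\nabla_x+n/2$ and $e^{itA}f(x)=e^{nt/2}f(e^tx)$, a change of variables on the Fourier side gives $\widehat{e^{itA}f}(\xi)=e^{-nt/2}\hat f(e^{-t}\xi)$. For $f\in C_0^\infty(\R^n)$ one has $\hat f\in\S(\R^n)$, hence $\varphi\hat f\in\S'(\R^n)$ (a Schwartz function multiplies a tempered distribution), so $\varphi(D)f=\F^{-1}(\varphi\hat f)$ is a well-defined element of $\S'(\R^n)$. Combining the two Fourier-side identities above yields the conjugation formula
\[
e^{itA}\varphi(D)e^{-itA}=\varphi(e^{-t}D),\qquad t\in\R,
\]
that is, the Fourier multiplier with symbol $\varphi(e^{-t}\xi)$; this is a short direct computation requiring no regularity of $\varphi$ beyond $\varphi\in\S'(\R^n)$.

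The next step is to differentiate this identity at $t=0$ against $f\in C_0^\infty(\R^n)$. The left-hand side has $t$-derivative $[iA,\varphi(D)]f=-[\varphi(D),iA]f$, which is well-defined since $iAf\in C_0^\infty(\R^n)$ and $\varphi(D)f\in\S'(\R^n)$, so both $\varphi(D)(iAf)$ and $iA(\varphi(D)f)$ make sense in $\S'(\R^n)$. For the right-hand side I would check, in $\S'(\R^n)$, that $\frac{d}{dt}\big|_{t=0}\varphi(e^{-t}\cdot)=-\tilde\varphi$: pairing with $\chi\in\S(\R^n)$ and substituting $\eta=e^{-t}\xi$ turns $\langle\varphi(e^{-t}\cdot),\chi\rangle$ into $e^{nt}\langle\varphi,\chi(e^t\cdot)\rangle$, whose derivative at $0$ is $n\langle\varphi,\chi\rangle+\langle\varphi,\eta\cdot\nabla_\eta\chi\rangle$, and a single integration by parts (legitimate because $\tilde\varphi=\xi\cdot\nabla_\xi\varphi\in L^1_{\loc}\cap\S'$) rewrites this as $-\langle\tilde\varphi,\chi\rangle$. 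Equating the two sides and cancelling signs gives $[\varphi(D),iA]f=\tilde\varphi(D)f$, as claimed.

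The only delicate point is bookkeeping: since $\varphi$ is merely locally integrable and tempered, the conjugation formula and the differentiation in $t$ must be read after testing against Schwartz functions, where the change of variables reduces everything to pairing the fixed distribution $\varphi$ (resp.\ $\tilde\varphi$) with a smooth $\S(\R^n)$-valued function of $t$, for which differentiation under the bracket is standard (continuity of $\varphi$ on $\S(\R^n)$ plus convergence of difference quotients in $\S(\R^n)$). I do not expect any genuine obstacle here. As an alternative that sidesteps the group entirely, one can compute directly on the Fourier side: $iA$ acts there as $-(\xi\cdot\nabla_\xi+n/2)$, the Leibniz rule $\partial_j(\varphi\hat f)=(\partial_j\varphi)\hat f+\varphi\,\partial_j\hat f$ holds in $\S'(\R^n)$ for $\hat f\in\S(\R^n)$, and expanding $\bigl[\varphi,\,-(\xi\cdot\nabla_\xi+n/2)\bigr]\hat f$ the terms involving $\xi\cdot\nabla_\xi\hat f$ cancel, leaving $(\xi\cdot\nabla_\xi\varphi)\hat f=\widehat{\tilde\varphi(D)f}$, which is again the assertion.
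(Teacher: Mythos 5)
Your proof is correct and takes essentially the same approach as the paper: differentiate the dilation conjugation of $\varphi(D)$ at $t=0$ — you use $e^{itA}\varphi(D)e^{-itA}=\varphi(e^{-t}D)$ while the paper writes the equivalent $e^{-itA}\varphi(D)e^{itA}=\varphi(e^{t}D)$, a pure sign convention — and then read off the commutator. You supply more careful distributional bookkeeping than the paper's one-line ``direct computation,'' but the underlying idea is identical.
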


%proof
\begin{proof}
For any $f\in C_0^\infty(\R^n)$ and $t\in \R$, a direct computation yields
\begin{align}
\label{lemma_section_2_1_proof_1}
\frac{d}{dt}\left(e^{-itA}\varphi(D)e^{itA}f(x)\right)=\frac{1}{(2\pi)^{n/2}}\int e^{ix\cdot \xi}\frac{\partial}{\partial t}\varphi(e^t\xi)\hat f(\xi)d\xi=\tilde \varphi(e^tD)f(x)
\end{align}
and the assertion follows by taking $t=0$.
%$$[\varphi(D),iA]f(x)=\frac{d}{dt}\Big(e^{-itA}\varphi(D)e^{itA}f\Big)\Big|_{t=0}=(\xi\cdot \nabla_\xi\varphi)(D)f(x).$$
\end{proof}

Note that \eqref{lemma_section_2_1_proof_1} with $\varphi(\xi)=\<\xi\>^s$ implies that $e^{itA}\H^s\subset \H^s$ for any $s\in \R$.
\vskip0.3cm
The following theorem is the main ingredient for the proof of Theorem \ref{theorem_1}.
%theorem
\begin{theorem}	
\label{theorem_section_2_1}
Let $n\ge1$, $0<\sigma<n/2$, $s>1/2$ and $H=(-\Delta)^\sigma+V$ satisfy Assumption A. Then the following estimate holds:
\begin{align}
\label{theorem_section_2_1_1}
\sup_{\delta>0}\sup_{z\in \C\setminus[0,\infty)}\norm{\<A\>^{-s}|D|^\sigma e^{-\delta|D|} (H-z)^{-1}e^{-\delta|D|}|D|^\sigma  \<A\>^{-s}}_{L^2\to L^2}<\infty.
\end{align}
\end{theorem}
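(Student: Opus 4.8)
\textbf{Proof plan for Theorem \ref{theorem_section_2_1}.}
The plan is to run a Mourre-type differential inequality argument, following Hoshiro \cite{Hos} and Mourre \cite{Mou,Mourre_CMP}, applied not to $H$ directly but to a regularized conjugated resolvent. The key structural input is that the commutators
$$
S_1:=[H,iA]=H_1+V_1,\qquad S_2:=[[H,iA],iA]=H_2+V_2,
$$
computed with the help of Lemma \ref{lemma_section_2_1} (noting $P_\ell(D)=H_\ell$ and $V_\ell=(x\cdot\nabla)^\ell V$ up to sign conventions), satisfy, by \eqref{assumption_A_2} and \eqref{assumption_A_3} \emph{globally} and without any spectral cutoff,
$$
S_1\gtrsim |D|^\sigma|D|^\sigma\ge 0,\qquad -S_1\lesssim S_2\lesssim S_1 .
$$
Because there is no compact error term and no localization, one expects a \emph{uniform} (in $z$) bound rather than only a local one near positive energies; this is exactly the mechanism that upgrades the usual Mourre estimate to a global weighted resolvent bound.

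First I would introduce, for $\delta>0$, $\ep\in(0,1]$ and $1/2<s<1$, the regularized operator
$$
G=G_\delta:=e^{-\delta|D|}|D|^\sigma,\qquad
F_\ep(z):=\<A\>^{-s}\<\ep A\>^{-1+s}\,G\,(H-z-i\ep S_1)^{-1}\,G\,\<\ep A\>^{-1+s}\<A\>^{-s},
$$
where the cutoffs $\<\ep A\>^{-1+s}$ are inserted so that $F_\ep(z)$ is a priori bounded for each fixed $\ep>0$ (the factor $e^{-\delta|D|}$ makes $G$ and its commutators with $A$ bounded relative to $H$, so $H-z-i\ep S_1$ is boundedly invertible). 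Then I would differentiate $\|F_\ep(z)\|$ — or rather the associated quadratic form $\langle F_\ep(z)u,u\rangle$ — along the rescaled conjugation, using that the imaginary part of $H-z-i\ep S_1$ dominates $\ep S_1\gtrsim \ep G^2$ (after absorbing the $e^{-\delta|D|}$), and that the commutator $[H-z-i\ep S_1, A]$ is controlled by $S_1$ and $S_2$, hence again by $S_1$. This is the standard Mourre bootstrap: one shows $\|F_\ep(z)\|$ satisfies a differential inequality of the form $\tfrac{d}{dt}\|F\| \lesssim \|F\| + \|F\|^{1/2}$ under $A$-conjugation with the weights' rescaling, which yields an $\ep$-, $\delta$- and $z$-independent bound on $\|F_\ep(z)\|$. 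Letting $\ep\searrow 0$ (using that $S_1$ is $H_0$-form bounded so $(H-z-i\ep S_1)^{-1}\to (H-z)^{-1}$ strongly on the relevant weighted spaces, and that $\<\ep A\>^{-1+s}\to 1$ strongly) gives
$$
\sup_{\delta>0}\sup_{z\in\C\setminus[0,\infty)}\bignorm{\<A\>^{-s}\,e^{-\delta|D|}|D|^\sigma\,(H-z)^{-1}\,|D|^\sigma e^{-\delta|D|}\,\<A\>^{-s}}_{L^2\to L^2}<\infty,
$$
which is precisely \eqref{theorem_section_2_1_1}.

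The main obstacle, and the place where care is needed, is making the formal commutator computation and the differential inequality rigorous in the presence of the singular, non-local potential $V$ (with only $H_0$-form boundedness of $V,V_1,V_2$) and the operator $G=e^{-\delta|D|}|D|^\sigma$ which does not commute with $A$. Concretely: one must check that $G$ maps the form domain $\H^\sigma$ appropriately, that $[G,A]$ and $[[G,A],A]$ are controlled by $G$ and lower-order terms uniformly in $\delta$ (this follows from Lemma \ref{lemma_section_2_1} since $\xi\cdot\nabla_\xi(e^{-\delta|\xi|}|\xi|^\sigma)=(\sigma-\delta|\xi|)e^{-\delta|\xi|}|\xi|^\sigma$, a bounded perturbation of $G$), and that all manipulations of $(H-z-i\ep S_1)^{-1}$ are justified on $C_0^\infty$ and extended by density using \eqref{assumption_A_4}. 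A secondary point is the choice of the exponent $s\in(1/2,1)$: one needs $s>1/2$ for the integrability of the differential inequality, and the auxiliary cutoffs $\<\ep A\>^{-1+s}$ must be arranged so that the product of weights that appears after commutation can be reabsorbed — this is the technical heart of Mourre's method and I would follow \cite{Hos,Mourre_CMP} closely here. The passage from $\<A\>^{-s}|D|^\sigma$ to the geometric weight $|x|^{-\sigma+\gamma}|D|^\gamma$ in Theorem \ref{theorem_1} is then a separate, purely harmonic-analytic step using Hardy's inequality \eqref{Hardy} and interpolation, as sketched in the outline, and is not part of the present theorem.
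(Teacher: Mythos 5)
Your proposal takes essentially the same route as the paper's proof: you introduce the identical regularized operator $F_\ep(z)=\<A\>^{-s}\<\ep A\>^{-1+s}\,e^{-\delta|D|}|D|^\sigma\,(H-z-i\ep S_1)^{-1}\,|D|^\sigma e^{-\delta|D|}\,\<\ep A\>^{-1+s}\<A\>^{-s}$, exploit the same global (non-localized) positivity $S_1\gtrsim |D|^{2\sigma}$ and $|S_2|\lesssim S_1$ coming from Assumption \ref{assumption_A}, and close via Mourre's differential inequality bootstrap (in the paper this appears as $\norm{\partial_\ep F_\ep}\lesssim\norm{F_\ep}+\ep^{s-3/2}\norm{F_\ep}^{1/2}$, computed by matching the $\ep$-derivative of $F_\ep$ against the $t$-derivative of the $A$-conjugated family and using Lemma \ref{lemma_section_2_3} for the weight $W_\ep=\<A\>^{-s}\<\ep A\>^{s-1}$). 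Your sketch correctly identifies the role of $s>1/2$, the commutator $[\eta_\delta,iA]=\sigma\eta_\delta-\delta|D|\eta_\delta$, and the need for a coercivity/Lax--Milgram step to make $(H-z-i\ep S_1)^{-1}:\H^{-\sigma}\to\H^\sigma$ well-defined, so while the details are left at the level of a plan, the approach matches the paper's.
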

\vskip0.2cm

Theorem \ref{theorem_section_2_1} (also Theorem \ref{theorem_section_5_1} in Section \ref{section_5}) extends the result of Hoshiro \cite{Hos} for the free case $V\equiv0$, to the cases with $V\not\equiv 0$. Note that, thanks to the factor $e^{-\delta|D|}$, the operator $\<A\>^{-s}|D|^\sigma e^{-\delta|D|} (H-z)^{-1}e^{-\delta|D|}|D|^\sigma  \<A\>^{-s}$ is clearly well-defined as a composition of bounded operators on $L^2$ for all $\delta>0$ and $z\in \C\setminus[0,\infty)$, while it is not clear whether $\<A\>^{-s}|D|^\sigma (H-z)^{-1}|D|^\sigma \<A\>^{-s}f$ is well-defined even for $f\in C_0^\infty(\R^n)$ since we do not know if $\<A\>^{-s}:C_0^\infty(\R^n)\to  \H^\sigma$ or not. This is the reason to add the factor $e^{-\delta|D|}$ in \eqref{theorem_section_2_1_1}.

\vskip0.3cm
Before proving Theorem \ref{theorem_section_2_1}, we come to show how it implies Theorem \ref{theorem_1}. Throughout this section, we use the following standard shorthand notations:
$$
\|f\|:=\|f\|_{L^2},\quad \|L\|:=\|L\|_{L^2\to L^2}
$$
for a function $f\in L^2(\R^n)$ and an operator $L\in \mathbb B(L^2)$.
\vskip0.2cm
%proof
\begin{proof}[\underline{Proof of Theorem \ref{theorem_1}}]
Let $f,g\in C_0^\infty(\R^n\setminus\{0\})$. Then $|D|^\gamma |x|^{-\sigma+\gamma}f$ and $|D|^\gamma |x|^{-\sigma+\gamma}g$ belong to $L^2$. Moreover, for each $z\in \C\setminus[0,\infty)$, the quantity
$$
\<(H-z)^{-1}e^{-\delta |D|}|D|^\gamma |x|^{-\sigma+\gamma}f,e^{-\delta |D|}|D|^\gamma |x|^{-\sigma+\gamma}g\>
$$
converges to $\<(H-z)^{-1}|D|^\gamma |x|^{-\sigma+\gamma}f,|D|^\gamma |x|^{-\sigma+\gamma}g\>$ as $\delta\searrow0$. Hence, if the estimate
\begin{align}
\label{theorem_1_proof_0}
|\<(H-z)^{-1}e^{-\delta |D|}|D|^\gamma |x|^{-\sigma+\gamma}f,e^{-\delta |D|}|D|^\gamma |x|^{-\sigma+\gamma}g\>|\lesssim \|f\|\,\|g\|
\end{align}
holds uniformly in $\delta>0$ and $z\in \C\setminus[0,\infty)$, then letting $\delta\searrow 0$ in \eqref{theorem_1_proof_0} and using the duality and density arguments, we obtain the desired bound \eqref{theorem_1_1}.

To obtain \eqref{theorem_1_proof_0}, we first consider the case $\sigma-n/2<\gamma\le \sigma-1$ and write
$$
e^{-\delta |D|}|D|^{\gamma}|x|^{-\sigma+\gamma}=e^{-\delta |D|}|D|^\sigma  \<A\>^{-1}\cdot\<A\>(A+i)^{-1}\cdot (A+i)|D|^{-\sigma+\gamma}|x|^{-\sigma+\gamma}.
$$
Since $|\xi|^{-\sigma+\gamma}\in L^1_{\loc}(\R^n)$ if $\sigma-\gamma<n/2$, we know by Lemma \ref{lemma_section_2_1} and \eqref{dilation} that
\begin{align*}
|D|^{-\sigma+\gamma}(A-i)f
&=A|D|^{-\sigma+\gamma}f+[|D|^{-\sigma+\gamma},A]f-i|D|^{-\sigma+\gamma}f\\
%&=A|D|^{-\sigma+\gamma}f-i(\sigma-\gamma-1)|D|^{-\sigma+\gamma}f\\
&=-ix\cdot\nabla |D|^{-\sigma+\gamma} f-i\left(\frac n2-\gamma-1\right)|D|^{-\sigma+\gamma}f
\end{align*}
Hardy's inequality \eqref{Hardy} then implies
$$
\norm{|x|^{-\sigma+\gamma}|D|^{-\sigma+\gamma}(A-i)f}\lesssim \norm{|x|^{-\sigma+\gamma+1}\nabla|D|^{-\sigma+\gamma}f}+\norm{|x|^{-\sigma+\gamma}|D|^{-\sigma+\gamma}f}
\lesssim \norm{f},
$$
where the condition $\sigma-n/2<\gamma\le \sigma-1$ was used  to ensure $-\sigma+\gamma+1\le 0$ and $|x|^{-\sigma+\gamma}\in L^1_{\loc}(\R^n)$. Thus $|x|^{-\sigma+\gamma}|D|^{-\sigma+\gamma}(A-i)$ is bounded on $L^2$ and so is $(A+i)|D|^{-\sigma+\gamma}|x|^{-\sigma+\gamma}$ by duality. This fact, Theorem \ref{theorem_section_2_1} with $s=1$ and the bound $\|\<A\>(A+i)^{-1}\|\lesssim1$ imply
\begin{align*}
&\sup_{\delta>0}\sup_{z\in \C\setminus[0,\infty)}|\<(H-z)^{-1}e^{-\delta |D|}|D|^{\gamma}|x|^{-\sigma+\gamma}f,e^{-\delta |D|}|D|^{\gamma}|x|^{-\sigma+\gamma}g\>|\\
&\lesssim \norm{(A+i)|D|^{-\sigma+\gamma}|x|^{-\sigma+\gamma}f}\,\norm{(A+i)|D|^{-\sigma+\gamma}|x|^{-\sigma+\gamma}g}\\
&\lesssim \norm{f}\,\norm{g}
\end{align*}
and hence the bound \eqref{theorem_1_proof_0} follows for $\sigma-n/2<\gamma\le \sigma-1$.
\vskip0.2cm
Next, for the case $\sigma-1<\gamma<\sigma-1/2$, setting $s:=\sigma-\gamma\in (1/2,1)$, we compute
$$
e^{-\delta |D|}|D|^{\gamma}|x|^{-\sigma+\gamma}=e^{-\delta |D|}|D|^\sigma\<A\>^{-s}\cdot \<A\>^s|D|^{-s}|x|^{-s}.
$$
By the same argument as above, it suffices to show that
\begin{align}
\label{theorem_1_proof_1}
\norm{|x|^{-s}|D|^{-s}\<A\>^s f}\lesssim \norm{f}.
\end{align}
To this end, we shall apply Stein's interpolation theorem \cite{Ste} to the operator
$$
T_\ep(z)=(|x|+\ep)^{-z}(|D|+\ep)^{-z}\<A\>^z,\quad \ep>0,\ 0\le \Re z\le1.
$$
For each $f,g\in C_0^\infty(\R^n)$, the complex function
$
\varphi_\ep(z)=\<T_\ep(z) f,g\>
$ is clearly bounded on $\{z\in \C\ |\ 0\le \Re z\le 1\}$ and analytic in $\{z\in \C\ |\ 0<\Re z<1\}$. Moreover, $T_\ep(iy)$ satisfies
\begin{align}
\label{theorem_1_proof_2}
\|T_\ep(iy)f\|\le\norm{f}
\end{align} for all $\ep>0,y\in \R$. To deal with the operator $T_\ep(1+iy)$, we see from Lemma \ref{lemma_section_2_1} that
$$
(|D|+\ep)^{-1-iy}A=A(|D|+\ep)^{-1-iy}+i(1+iy) |D|(|D|+\ep)^{-2-iy}.
$$
This, combined with Hardy's inequality \eqref{Hardy}, implies
\begin{align*}
&\|(|x|+\ep)^{-1-iy}(|D|+\ep)^{-1-iy}(A+i)f\|\\
&\lesssim \| |x|^{-1}x\cdot\nabla (|D|+\ep)^{-1-iy}f\|+\| |x|^{-1}(|D|+\ep)^{-1-iy}f\|\\&\quad+\<y\>\| |x|^{-1}|D|(|D|+\ep)^{-2-iy}f\|\\
&\lesssim \||D|(|D|+\ep)^{-1-iy}f\|+\<y\>\||D|^2(|D|+\ep)^{-2-iy}f\|\\
%&\lesssim \<y\>\| (|D|+\ep)^{-iy}f\|\\
&\lesssim\<y\>\|f\|
\end{align*}
uniformly in $\ep>0$. Hence there exists $C>0$ independent of $\ep>0$ and $y\in \R$ such that\begin{align}
\label{theorem_1_proof_3}
\norm{T_\ep(1+iy)f}
%&\le \norm{(|x|+\ep)^{-1}(|D|+\ep)^{-1+i\Im z}\<A\>f}\\
\lesssim \<y\>\norm{(A+i)^{-1}\<A\>^{1+iy}f}
\le C\<y\>\norm{f}.
\end{align}
By \eqref{theorem_1_proof_2} and \eqref{theorem_1_proof_3}, we can apply Stein's interpolation theorem \cite[Theorem 1]{Ste} (with the choice of $A_0(y)=1$ and $A_1(y)=C\<y\>$ in this theorem) to obtain that,  for any $0<s<1$, $$\|T_\ep(s)f\|\lesssim \|f\|$$
uniformly in $\ep>0$. Since $(|x|+\ep)^{-s}(|D|+\ep)^{-s}\<A\>^sf$ converges to $|x|^{-s}|D|^{-s}\<A\>^{s}f$ as $\ep\searrow0$ in the distribution sense, by letting $\ep\searrow0$ in the estimate  $$
|\<T_\ep(s)f,g\>|\lesssim\norm{f}\,\norm{g}
$$
and using the duality and density arguments, we obtain the estimate \eqref{theorem_1_proof_1}. This implies the bound \eqref{theorem_1_proof_0} for $\sigma-1<\gamma<\sigma-1/2$ and we complete the proof of Theorem \ref{theorem_1}.
\end{proof}
\vskip0.3cm
{Now we turn to the proof of  Theorem \ref{theorem_section_2_1}.}
To this end,  we first give the rigorous definitions of the unbounded commutators $[H,iA]$ and $[[H,iA],iA]$ by the sesquilinear forms.
Let $u,v\in C_0^\infty(\R^n)$. Recall that $$Q_H(u,v):=\<H_0u,v\>+\<Vu,v\>$$ and that $H_\ell,V_\ell$ were given by \eqref{symbol_ell}. Consider the following two sesquilinear forms
\begin{align*}
Q_{S_1}(u,v):=Q_H(iAu,v)+Q_H(u,iAv),\quad
Q_{S_2}(u,v):=Q_{S_1}(iAu,v)+Q_{S_1}(u,iAv).
\end{align*}
Recalling that $P_0(\xi)=|\xi|^{2\sigma}$ and $P_\ell(\xi)=(\xi\cdot\nabla)^\ell P_0(\xi)$, we find by Lemma \ref{lemma_section_2_1} that
\begin{align*}
[H_0,iA]=P_1(D)=H_1,\quad [[H_0,iA],iA]=P_2(D)=H_2
\end{align*}
as sesquilinear forms on $C_0^\infty(\R^n)$. Hence we have
$$
Q_{S_1}(u,v)=\<[H,iA]u,v\>=\<(H_1+V_1)u,v\>,
$$
$$
Q_{S_2}(u,v)=\<[[H,iA],iA]u,v\>=\<(H_2+V_2)u,v\>,
$$
on $C_0^\infty(\R^n)$, where $[V,iA]=V_1$ and $[V_1,iA]=V_2$.  By virtue of these two formulas and Assumption \ref{assumption_A}, $Q_{S_1}$ and $Q_{S_2}$ satisfy
\begin{align}
\label{lemma_section_2_2_0}
|Q_{S_1}(u,v)|\lesssim \norm{u}_{\H^\sigma }\norm{v}_{\H^\sigma },\quad \||D|^\sigma u\|^2+|Q_{S_2}(u,u)|&\lesssim Q_{S_1}(u,u).
\end{align}
These estimates allow us extending $Q_{S_1}$ to a continuous positive sesquilinear form on $\H^\sigma $ for which we use the same symbol $Q_{S_1}$.  Similarly, so do for $Q_{S_2}$.

Let $S_1$ be a unique positive self-adjoint operator generated by the closed form $Q_{S_1}$ such that $Q_{S_1}(u,v)=\<S_1u,v\>$. Clearly,  $S_1$ is the self-adjoint extension of the commutator $[H,iA]$ on $C_0^\infty(\R^n)$. Its square root $S_1^{1/2}$ with domain $D(S_1^{1/2})=\H^\sigma $ can be defined via the spectral theorem. $Q_{S_2}$ also generates a self-adjoint operator $S_2$ with form domain $\H^\sigma$ satisfying $Q_{S_2}(u,v)=\<S_2u,v\>$. Similarly, $S_2$ is the self-adjoint extension of the double commutator $[[H,iA],iA]$ on $C_0^\infty(\R^n)$.  Moreover, we have for $u\in \H^\sigma$,
\begin{align}
\label{lemma_section_2_2_1}
\norm{|D|^\sigma u}\lesssim \norm{S_1^{1/2}u}\lesssim \norm{u}_{\H^\sigma },\quad
|\<S_2u,u\>|\lesssim \|S_1^{1/2}u\|^2.
\end{align}
In particular $S^{1/2}_1$ is positive definite.

We also define operators $\tilde H,\tilde S_1,\tilde S_2:\H^{\sigma }\to \H^{-\sigma }$ by
$$
\tilde Hu:=Q_H(u,\cdot),\quad \tilde S_1u:=Q_{S_1}(u,\cdot),\quad \tilde S_2u:=Q_{S_2}(u,\cdot).
$$
These are extensions of $H,S_1,S_2$, namely $H\subset \tilde H$, $S_1\subset\tilde S_1$ and $S_2\subset \tilde S_2$.
\vskip0.3cm
Before starting the proof of Theorem \ref{theorem_section_2_1}, we need one more lemma.
%{lemma}
\begin{lemma}
\label{lemma_section_2_3}
Define $W_\ep:=\<A\>^{-s}\<\ep A\>^{s-1}$ for $0<s\le1$ and $0\le  \ep\le1$. Then $W_\ep\in C([0,1]_\ep;\mathbb B(L^2))\cap C^1((0,1]_\ep;\mathbb B(L^2))$. Moreover, the following estimates hold:
$$
\norm{W_\ep}\le 1,\quad
\norm{AW_\ep}\le \ep^{s-1},\quad
\norm{W_\ep'}\le (1-s) \ep^{s-1},
$$
where $ W_\ep'=\frac{d}{d\ep}W_\ep=(s-1)\ep\<A\>^{-s}|A|^2\<\ep A\>^{s-3}$.
\end{lemma}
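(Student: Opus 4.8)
The plan is to reduce every assertion to elementary one–variable estimates through the spectral theorem for the self-adjoint operator $A$. Writing $\langle\lambda\rangle=(1+\lambda^{2})^{1/2}$, the operators $W_\ep$, $AW_\ep$ and $W_\ep'$ are obtained from the Borel functional calculus of $A$ applied to the symbols
$$
m_\ep(\lambda)=\langle\lambda\rangle^{-s}\langle\ep\lambda\rangle^{s-1},\qquad \lambda\,m_\ep(\lambda),\qquad \partial_\ep m_\ep(\lambda)=(s-1)\ep\lambda^{2}\langle\lambda\rangle^{-s}\langle\ep\lambda\rangle^{s-3},
$$
respectively; the last identity (together with $|A|^{2}=A^{2}$) is precisely the claimed formula for $W_\ep'$, and the middle one is a bounded operator once we know the symbol is bounded, so it coincides with $A$ composed with $W_\ep$. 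Hence each operator norm in the statement equals the $L^{\infty}(\R_\lambda)$–norm of the corresponding symbol, and it suffices to bound these symbols pointwise and to establish continuity and differentiability of $\ep\mapsto m_\ep$ as a map into $L^{\infty}(\R_\lambda)$.

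For the norm bounds I would argue pointwise in $\lambda$. Since $0<s\le1$, both factors of $m_\ep$ are $\le1$, giving $\norm{W_\ep}\le1$. For $\norm{AW_\ep}$, using $\langle\lambda\rangle^{-s}\le|\lambda|^{-s}$ and $\langle\ep\lambda\rangle^{s-1}\le(\ep|\lambda|)^{s-1}$ for $\lambda\neq0$ (both valid because $s-1\le0$) yields $|\lambda m_\ep(\lambda)|\le|\lambda|^{1-s}\ep^{s-1}|\lambda|^{s-1}=\ep^{s-1}$. For $\norm{W_\ep'}$, the same two estimates together with the substitution $\mu=\ep|\lambda|$ reduce the absolute value of the symbol to $(1-s)\ep^{s-1}\mu^{2-s}(1+\mu^{2})^{(s-3)/2}$, and since $\mu^{2-s}\le(1+\mu^{2})^{(2-s)/2}$ one gets $\mu^{2-s}(1+\mu^{2})^{(s-3)/2}\le(1+\mu^{2})^{-1/2}\le1$, hence $\norm{W_\ep'}\le(1-s)\ep^{s-1}$. (At $\ep=0$ these last two bounds are read with the convention $0^{s-1}=+\infty$ when $s<1$ and are trivial equalities when $s=1$.)

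It then remains to show that $\ep\mapsto m_\ep$ is continuous on $[0,1]$ and $C^{1}$ on $(0,1]$ with values in $L^{\infty}(\R_\lambda)$. On any compact subinterval $[\ep_0,1]\subset(0,1]$ the bound on $\partial_\ep m_\ep$ above gives $\sup_\lambda|\partial_\ep m_\ep(\lambda)|\le(1-s)\ep_0^{s-1}$, so the mean value theorem yields $\norm{W_{\ep_1}-W_{\ep_2}}\le(1-s)\ep_0^{s-1}|\ep_1-\ep_2|$ and hence local Lipschitz continuity on $(0,1]$. Continuity at $\ep=0$ (where $W_0=\langle A\rangle^{-s}$) is handled by splitting $\R_\lambda$ into $\{|\lambda|\le R\}$ and $\{|\lambda|>R\}$: on the first $m_\ep\to m_0$ uniformly as $\ep\to0$ by pointwise convergence on a compact set, while on the second $|m_\ep(\lambda)|,|m_0(\lambda)|\lesssim R^{-s}$ uniformly in $\ep$; sending $\ep\to0$ and then $R\to\infty$ gives $\norm{W_\ep-W_0}\to0$. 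Finally, for the $C^{1}$ claim one writes the difference quotient minus $\partial_\ep m_\ep$ as the average of $\partial_\tau m_\tau-\partial_\ep m_\ep$ over $\tau$ between $\ep$ and $\ep+\delta$, and bounds it by a uniform-in-$\lambda$ estimate for $\partial_\tau^{2}m_\tau$ on compact subintervals of $(0,1]$ — which follows from the same $\mu=\tau\lambda$ substitution applied to the two terms of $\partial_\tau^{2}m_\tau$ — so that $\frac{d}{d\ep}W_\ep$ exists in operator norm, equals $W_\ep'$, and depends continuously on $\ep$.

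I expect the only genuinely technical (though not deep) point to be the operator-norm, rather than merely strong, continuity of $\ep\mapsto W_\ep$ at $\ep=0$: since $m_\ep\to m_0$ only locally uniformly in $\lambda$, one must exploit the decay $\langle\lambda\rangle^{-s}$ through the $R$–splitting above. Everything else is routine bookkeeping in the functional calculus.
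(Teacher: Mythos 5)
Your proof is correct and takes essentially the same approach as the paper's: reduce each operator-norm bound to a pointwise estimate on the corresponding symbol via the Borel functional calculus for the self-adjoint $A$, with the three bounds following from the elementary inequalities $\<\lambda\>^{-s}\le |\lambda|^{-s}$, $\<\ep\lambda\>^{s-1}\le(\ep|\lambda|)^{s-1}$ and the substitution $\mu=\ep|\lambda|$. You spell out the operator-norm continuity at $\ep=0$ (the $R$-splitting) and the $C^{1}$ regularity on $(0,1]$ in more detail than the paper, which states these steps tersely, but the underlying argument is the same.
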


%proof
\begin{proof}
By direct computations, the following estimates hold for $\lambda\in \R$ and $0<\ep\le1$:
\begin{align*}
\<\lambda\>^{-s}\<\ep \lambda\>^{s-1}&\le1,\\
|\lambda|\<\lambda\>^{-s}\<\ep \lambda\>^{s-1}&=|\lambda|^s\<\lambda\>^{-s}(\ep|\lambda|)^{-s+1}\<\ep \lambda\>^{s-1}\ep^{s-1}\le \ep^{s-1},\\
\ep\<\lambda\>^{-s}|\lambda|^2\<\ep \lambda\>^{s-3}&=|\lambda|^s\<\lambda\>^{-s}(\ep|\lambda|)^{-s+2}\<\ep \lambda\>^{s-3}\ep^{s-1}\le \ep^{s-1}.
\end{align*}
Since $\<\lambda\>^{-s}\<\ep \lambda\>^{s-1}\in C([0,1]_\ep)\cap C^1((0,1]_\ep)$ for all $\lambda\in \R$ and $A$ is self-adjoint, the desired result follows from these three estimates and the spectral decomposition theorem.
\end{proof}
\vskip0.3cm
%proof
\begin{proof}[\underline{Proof of Theorem \ref{theorem_section_2_1}}]
The proof is based on a version of Mourre's theory by Hoshiro \cite{Hos}. We may assume $1/2<s<1$ and $\Im z>0$. In what follows, we use the notation $\eta_\delta:=|D|^\sigma e^{-\delta|D|}$ for short. For $0\le \ep\le 1$ and $\delta>0$, we set
\begin{align*}
G_\ep=G_\ep(z):=(\tilde H-z-i\ep \tilde S_1)^{-1},\ F_\ep=F_\ep(z):=W_\ep\eta_\delta G_\ep(z) \eta_\delta W_\ep,
\end{align*}
Then the estimate \eqref{theorem_section_2_1_1} will be obtained by showing that $F_\ep$ is bounded on $L^2$ uniformly in $\ep,z$ and $\delta$. The proof is decomposed into two steps.
\vskip0.2cm
{\it Step 1}. We first check that $F_\ep$ is a well-defined bounded operator on $L^2$. To this end, we consider the operator $\tilde H-z-i\ep \tilde S_1:\H^\sigma\to\H^{-\sigma}$ which satisfies, for $f\in \H^\sigma$,
\begin{align*}
X:=\Re\<(\tilde H-z-i\ep \tilde S_1)f,f\>&=Q_H(f,f)-\Re z\|f\|^2,\\
Y:=-\Im \<(\tilde H-z-i\ep \tilde S_1)f,f\>&=\Im z\|f\|^2+\ep Q_{S_1}(f,f)\ge \Im z\|f\|^2.
\end{align*}
By plugging the second estimate into the first one, we have
$$
Q_H(f,f)\le X+(\Im z)^{-1}{|\Re z|}Y\lesssim \<z\>|\Im z|^{-1}|\<(\tilde H-z-i\ep \tilde S_1)f,f\>|,
$$
which, together with \eqref{assumption_A_4}, implies the following coercivity estimate:
\begin{align}
\label{coercivity}
 \|f\|_{\H^\sigma}^2\lesssim \<z\>|\Im z|^{-1}|\<(\tilde H-z-i\ep \tilde S_1)f,f\>|.
\end{align}
The inverse $G_\ep(z):\H^{-\sigma}\to \H^\sigma$ of $\tilde H-z-i\ep \tilde S_1$ thus exists and satisfies
\begin{align}\label{proof_theorem_section_2_1_1}
\sup_{\ep\in [0,1]}\norm{G_\ep(z)}_{\H^{-\sigma}\to \H^\sigma}\lesssim \<z\>|\Im z|^{-1}
\end{align} by the Lax--Milgram theorem. Similarly, $G_{-\ep}(\overline z)$ exists and satisfies the same estimate as \eqref{proof_theorem_section_2_1_1}. Since $W_\ep\eta_\delta,\eta_\delta W_\ep \in \mathbb B(L^2)$, $F_\ep=W_\ep\eta_\delta G_\ep \eta_\delta W_\ep$ thus is bounded on $L^2$.
\vskip0.1cm
Next, we claim that the following uniform estimate  (in $\ep,z$ and $\delta$) holds:
\begin{align}
\label{proof_theorem_section_2_1_2}
\sup\{\norm{F_\ep(z)}\ |\ \ep\in [0,1],\ \delta>0,\ \Im z>0\}<\infty.
\end{align}
Since $G_0(z)=(H-z)^{-1}$ on $L^2$ and $W_0=\<A\>^{-s}$, as $\ep=0$, \eqref{proof_theorem_section_2_1_2} implies
$$
\sup_{\delta>0}\sup_{\Im z>0}|\<(H-z)^{-1}\eta_\delta\<A\>^{-s},\eta_\delta\<A\>^{-s}g\>|\lesssim \norm{f}\,\norm{g},\quad f,g\in L^2.
$$
Taking the complex conjugate, we also have the same estimate for $\Im z<0$ and hence
$$
\sup_{\delta>0}\sup_{z\in \C\setminus\R}|\<(H-z)^{-1}\eta_\delta\<A\>^{-s}f,\eta_\delta\<A\>^{-s}g\>|\lesssim \norm{f}\,\norm{g}.
$$
Since $(H-z)^{-1}$ is analytic for $z\in \C\setminus[0,\infty)$, by letting $\Im z\to0$ while keeping $\Re z < 0$, one can replace $z\in \C\setminus\R$ by $z\in \C\setminus[0,\infty)$ in the supremum. Therefore, we arrive at the desired bound \eqref{theorem_section_2_1_1} by the duality argument.
\vskip0.2cm
{\it Step 2}. It remains to prove the estimate \eqref{proof_theorem_section_2_1_2}. At first, we find by Lemma \ref{lemma_section_2_3}, the fact $\tilde S_1:\H^\sigma\to \H^{-\sigma}$ and the following formula
\begin{align*}
G_\ep(z)-G_{\ep'}(z)=i(\ep-\ep')G_\ep(z)\tilde S_1G_{\ep'}(z),
\end{align*}
that the map $\ep \mapsto F_\ep\in \mathbb B(L^2)$ is continuous on $[0,1]$,  differentiable in $(0,1)$ and satisfies
\begin{align}
\label{proof_theorem_section_2_1_3}
\partial_\ep F_\ep&=iW_\ep \eta_\delta G_\ep\tilde S_1G_\ep \eta_\delta W_\ep+W_\ep' \eta_\delta G_\ep \eta_\delta W_\ep^*
+W_\ep \eta_\delta G_\ep \eta_\delta W_\ep',\quad 0<\ep<1.
\end{align}
We next compute $-i\partial_t(W_\ep \eta_\delta e^{-itA} G_\ep e^{itA}\eta_\delta W_\ep)|_{t=0}$ in two ways. On one hand, we have
\begin{align*}
\partial_t (e^{-itA}G_\ep e^{itA})|_{t=0}
=-[iA,G_\ep]
%&=-G_\ep[(\tilde H-z-i\ep \tilde S_1),iA]G_\ep \\
=-G_\ep(\tilde S_1-i\ep\tilde S_2)G_\ep ,
\end{align*}
which, combined with \eqref{proof_theorem_section_2_1_3}, implies
\begin{align}
\nonumber
&-i\partial_t (W_\ep \eta_\delta e^{-itA}G_\ep e^{itA}\eta_\delta W_\ep)|_{t=0}\\
\label{proof_theorem_section_2_1_4}
&=\partial_\ep F_\ep-W_\ep' \eta_\delta G_\ep \eta_\delta W_\ep
-W_\ep \eta_\delta G_\ep \eta_\delta  W_\ep'+\ep W_\ep \eta_\delta G_\ep\tilde S_2G_\ep \eta_\delta W_\ep^*.
\end{align}
On the other hand, since
$
[\eta_\delta,iA]=\sigma \eta_\delta-\delta |D|\eta_\delta
$
 by Lemma \ref{lemma_section_2_1}, we have
\begin{align*}
-i\partial_t (\eta_\delta e^{-itA})|_{t=0}&=(-A+i\sigma) \eta_\delta-\delta |D|\eta_\delta,\\
-i\partial_t (e^{itA}\eta_\delta )|_{t=0}&=\eta_\delta (A+i\sigma)-\delta |D|\eta_\delta,
\end{align*}
which imply
\begin{align}
\nonumber
&-i\partial_t(W_\ep \eta_\delta e^{-itA}G_\ep e^{itA} \eta_\delta W_\ep)|_{t=0}\\
%&=(-A+i\sigma)F_\ep +F_\ep (A+i\sigma)-\delta (W_\ep |D|\eta_\delta G_\ep \eta_\delta W_\ep +W_\ep \eta_\delta G_\ep \eta_\delta |D|W_\ep)\\
\label{proof_theorem_section_2_1_5}
&=2i\sigma F_\ep-AF_\ep+F_\ep A-\delta (W_\ep |D|\eta_\delta G_\ep \eta_\delta W_\ep +W_\ep \eta_\delta G_\ep \eta_\delta |D|W_\ep).
\end{align}
By \eqref{proof_theorem_section_2_1_4} and \eqref{proof_theorem_section_2_1_5}, we obtain
\begin{align}
\nonumber
\partial_\ep F_\ep
&=2i\sigma F_\ep
+\Big(-AF_\ep+F_\ep A\Big)
+\Big(W_\ep' \eta_\delta G_\ep \eta_\delta W_\ep+W_\ep \eta_\delta G_\ep \eta_\delta  W_\ep'\Big)\\
\label{proof_theorem_section_2_1_6}
&-\delta \Big(W_\ep |D|\eta_\delta G_\ep \eta_\delta W_\ep +W_\ep \eta_\delta G_\ep \eta_\delta |D|W_\ep\Big)
-\ep W_\ep \eta_\delta G_\ep\tilde S_2G_\ep \eta_\delta W_\ep^*.
\end{align}
\vskip0.2cm
Now we apply Mourre's differential inequality technique to $F_\ep$. For short, we set
\begin{align*}
I_1&=-AF_\ep+F_\ep A,\\
I_2&=W_\ep' \eta_\delta G_\ep \eta_\delta W_\ep+W_\ep \eta_\delta G_\ep \eta_\delta  W_\ep',\\
I_3&=-\delta \Big(W_\ep |D|\eta_\delta G_\ep \eta_\delta W_\ep +W_\ep \eta_\delta G_\ep \eta_\delta |D|W_\ep\Big),\\
I_4&=-\ep W_\ep \eta_\delta G_\ep\tilde S_2G_\ep \eta_\delta W_\ep^*
\end{align*}
so that $\partial_\ep F_\ep=2i\sigma F_\ep+I_1+I_2+I_3 +I_4$. Recalling that $\eta_\delta=|D|^\sigma e^{-\delta|D|}$, we have
$$
\norm{F_\ep }\le \norm{W_\ep}\,\norm{e^{-\delta |D|}}\,\norm{|D|^\sigma G_\ep \eta_\delta W_\ep}\lesssim \norm{S_1^{1/2}G_\ep \eta_\delta W_\ep},
$$
uniformly in $\ep,z$ and $\delta$, where we have used \eqref{lemma_section_2_2_1} and Lemma \ref{lemma_section_2_3}. For $f\in L^2$ we have
\begin{align*}
\|S_1^{1/2} G_\ep \eta_\delta W_\ep f\|^2&=\<\tilde S_1 G_\ep \eta_\delta W_\ep f,G_\ep \eta_\delta W_\ep f\>\\
&\le \ep^{-1}\Big(\ep \<\tilde S_1 G_\ep \eta_\delta W_\ep f,G_\ep \eta_\delta W_\ep f\>+\Im z\norm{G_\ep \eta_\delta W_\ep f}^2\Big)\\
&=-\ep^{-1}\Im\<(\tilde H-z-i\ep \tilde S_1)G_\ep \eta_\delta W_\ep f,G_\ep \eta_\delta W_\ep f\>\\
&=-\ep^{-1}\Im \<\eta_\delta W_\ep f,G_\ep \eta_\delta W_\ep f\>\\
&=-\ep^{-1}\Im \<f,F_\ep f\>\\
&\le \ep^{-1}\|F_\ep\|\|f\|^2.
\end{align*}
Therefore, we obtain the following bounds for $F_\ep$, $|D|^\sigma G_\ep \eta_\delta W_\ep$ and $S_1^{1/2}G_\ep \eta_\delta W_\ep$:
\begin{align}
\label{proof_theorem_section_2_1_7}
\norm{F_\ep}\le \norm{|D|^\sigma G_\ep \eta_\delta W_\ep}\lesssim \norm{S_1^{1/2}G_\ep \eta_\delta W_\ep }\lesssim \ep^{-1/2}\|F_\ep\|^{1/2}\lesssim \ep^{-1}.
\end{align}
Similarly, \eqref{proof_theorem_section_2_1_7} and Lemma \ref{lemma_section_2_3} imply
\begin{align}
\norm{I_1}
\le 2\norm{AF_\ep }\lesssim \ep^{s-1} \norm{|D|^\sigma G_\ep \eta_\delta W_\ep}
\label{proof_theorem_section_2_1_8}
\lesssim\ep^{s-3/2}\norm{F_\ep}^{1/2}.
\end{align}
We also obtain by the same argument and Lemma \ref{lemma_section_2_3} that
\begin{align}
\label{proof_theorem_section_2_1_9}
\norm{I_2}&\le 2\norm{W_\ep' \eta_\delta G_\ep \eta_\delta W_\ep}\lesssim \ep^{s-1} \norm{|D|^\sigma G_\ep \eta_\delta W_\ep }\lesssim\ep^{s-3/2}\norm{F_\ep}^{1/2}.
\end{align}
For the part $I_3$, since $\delta |\xi|e^{-\delta|\xi|}\le 1$ and $s<1$, one has
\begin{align}
\|I_3\|\le 2 \|W_\ep\| \cdot\| \delta |D|e^{-\delta |D|}\| \cdot \||D|^\sigma G_\ep \eta_\delta W_\ep\|\lesssim \ep^{-1/2}\|F_\ep\|^{1/2}\lesssim \ep^{s-3/2}\|F_\ep\|^{1/2}.
\end{align}
Moreover, for the term $I_4$, \eqref{lemma_section_2_2_1} and \eqref{proof_theorem_section_2_1_7} imply
\begin{align}
\norm{I_4}
=\ep\norm{W_\ep \eta_\delta G_\ep \tilde S_2G_\ep \eta_\delta W_\ep}
\label{proof_theorem_section_2_1_10}
%\le \ep\norm{S_1^{1/2}G_{-\ep}(\overline z)B^*_\ep}\norm{S_1^{1/2}G_{\ep}(z)B_\ep^*}\\
\le \ep \|S_1^{1/2}G_{\ep}\eta_\delta W_\ep\|^2
\lesssim \norm{F_\ep}.
\end{align}
Note that all of the implicit constants in \eqref{proof_theorem_section_2_1_7}--\eqref{proof_theorem_section_2_1_10} are independent of $\ep,z$ and $\delta$. Plugging \eqref{proof_theorem_section_2_1_7}--\eqref{proof_theorem_section_2_1_10} into \eqref{proof_theorem_section_2_1_6} yields the following differential inequality:
\begin{align}
\label{proof_theorem_section_2_1_11}
\norm{\partial_\ep F_\ep}\lesssim \norm{F_\ep}+\ep^{s-3/2}\norm{F_\ep}^{1/2},
\end{align}
where the implicit constant is again independent of $\ep,\delta$ and $z$. Hence we have
$$
\norm{F_\ep}\le \norm{F_1}+C_0\int_\ep^1\left(\norm{F_t}+t^{s-3/2}\norm{F_t}^{1/2}\right)dt,\quad 0<\ep\le 1,
$$
with some $C_0>0$ independent of $\ep,z$ and $\delta$.
Let
$$
f(\ep)=\norm{F_1}+C_0\int_\ep^1\left(\norm{F_t}+t^{s-3/2}\norm{F_t}^{1/2}\right)dt,\quad \Psi(\ep)=C_0(\ep-1).
$$
Then $\norm{F_\ep}\le f(\ep)$ and
$$
\partial_\ep (e^{\Psi(\ep)}f(\ep))=C_0e^{\Psi(\ep)}(f(\ep)-\norm{F_\ep}-\ep^{s-3/2}\norm{F_\ep}^{1/2})\ge -C_0\ep^{s-3/2}e^{\Psi(\ep)/2}(e^{\Psi(\ep)}f(\ep))^{1/2},
$$
which, together with \eqref{proof_theorem_section_2_1_7} at $\ep=1$ and the condition $s>1/2$, implies
$$
f(\ep)\le e^{-\Psi(\ep)}\left(f(1)^{1/2}+\frac{C_0}{2}\int_\ep^1 t^{s-3/2}e^{\Psi(t)/2}dt\right)^2\lesssim \norm{F_1}+1\lesssim1
$$
uniformly in $\ep,z$ and $\delta$. This shows \eqref{proof_theorem_section_2_1_2}, completing the proof.
\end{proof}
\vskip0.3cm
%Proof of Corollary \ref{corollary_1}
\begin{proof}[\underline{Proof of Corollary \ref{corollary_1}}]
The proof is based on Mourre's method \cite{Mou,Mourre_CMP}. Let $I\Subset I_1\Subset I_2\Subset I_3\Subset (0,\infty)$ be relatively compact  intervals and $I^\pm=\{z\in \C:\pm\Im z>0,\ \Re z\in I\}$.
Let $\varphi_1,\varphi_2\in C_0^\infty((0,\infty))$ be such that $\supp \varphi_j\subset I_{j+1}$, $\varphi_j\equiv1$ on $I_j$ for $j=1,2$. Define
$$
H_{\varphi_2}:=H\varphi_2^2(H),\quad M_1:=\varphi_1(H)[H_{\varphi_2},iA]\varphi_1(H),\quad M_2:=[M_1,iA].
$$
All of them are extended to bounded self-adjoint operators on $L^2$. Then one has
$$
M_1=\varphi_1(H)\Big(\varphi_2^2(H)HiA-iAH\varphi_2^2(H)\Big)\varphi_1(H)=\varphi_1(H)[H,iA]\varphi_1(H)
$$
since $\varphi_1\varphi_2\equiv\varphi_1$. Moreover, the assumption \eqref{corollary_1_1} implies Mourre's inequality for $M_1$:
$$
M_1\gtrsim \varphi_1(H)H\varphi_1(H)\gtrsim \varphi_1(H)^2.
$$
Hence, by Mourre's theory (\cite{Mou,Mourre_CMP}), the limit
$\<A\>^{-s}(H_{\varphi_2}-\lambda-i0)^{-1}\<A\>^{-s}\in \mathbb B(L^2)$
 exists for all $\lambda\in I$ as long as $s>1/2$. To remove the operator $\varphi_2^2(H)$, we write
$$(H-z)^{-1}=\varphi_1(H)(H_{\varphi_2}-z)^{-1}+(1-\varphi_1)(H)(H-z)^{-1}.$$ Since $\supp(1-\varphi_1)\subset I_1^c$ and $\mathrm{\mathop{dist}}(I_1^c ,I)\gtrsim1$, the term $(1-\varphi_1)(H)(H-z)^{-1}$ is analytic in $\{z\in \C\ :\ \Re z\in I\}$, satisfying
$$
\norm{\<A\>^{-s}(1-\varphi_1)(H)(H-z)^{-1}\<A\>^{-s}}\lesssim1,\quad \Re z\in I,\ \Im z\in \R.
$$
Moreover, since $\<A\>^{-1}\varphi_1(H)\<A\>$ and $\varphi_1(H)$ are bounded on $L^2$, $\<A\>^{-s}\varphi_1(H)\<A\>^s$ is also bounded on $L^2$ by the complex interpolation. By the result for $H_{\varphi_2}$ and these remarks, we thus obtain that the limits
$\<A\>^{-s}(H-\lambda\mp i0)^{-1}\<A\>^{-s}\in \mathbb B(L^2)$
also exist for $\lambda\in I$ (and hence for all $\lambda>0$).

Finally, we conclude the uniform bound \eqref{corollary_1_2} by letting $\ep\to 0$ in the following estimate
$$
|\<(H-\lambda-i\ep)^{-1}f,g\>|\lesssim\norm{|x|^{\sigma-\gamma}|D|^{-\gamma}f}\ \norm{|x|^{\sigma-\gamma}|D|^{-\gamma}g},\quad f,g\in C_0^\infty(\R^n),
$$
obtained by Theorem \ref{theorem_1}. \end{proof}
\vskip0.3cm
Once we have Theorem \ref{theorem_1}, Theorem \ref{theorem_2} follows from the following abstract result with the choice of $G=|x|^{-\sigma+\gamma}|D|^{\gamma}$ (see e.g.,  \cite[Theorem 5.1]{Kat} and \cite[Theorem 2.3]{DAn}).

%{lemma}
\begin{lemma}
\label{lemma_section_2_6}
Let $H$ be a self-adjoint operator and $G$ a densely defined closed operator on a Hilbert space $\H$. Then the following two statements are equivalent.
\begin{itemize}
\item $G$ is $H$-supersmooth in the sense that
$$
\sup_{z\in \C\setminus\R}|\<(H-z)^{-1}G^*f,G^*f\>_{\H}|\lesssim\|f\|^2_{\H},\quad f\in D(G^*).
$$
\item $e^{-itA}\H\subset D(G)$ for a.e $t\in \R$. Moreover, for all $\psi_0\in \H$ and all simple function $F:\R\to D(G^*)$ with $F\in L^2(\R;\H)$, the following estimates are satisfied:
\begin{align*}
\norm{Ge^{-itH}\psi_0}_{L^2(\R;\H)}&\lesssim \norm{\psi_0}_{\H},\\ \left\|G\int_0^te^{-i(t-s)H}G^*F(s)ds\right\|_{L^2(\R;\H)}&\lesssim \norm{F}_{L^2(\R;\H)}.
\end{align*}
\end{itemize}
\end{lemma}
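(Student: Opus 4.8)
The plan is to prove both implications inside Kato's smooth perturbation theory, in the strengthened ``supersmooth'' form of Kato--Yajima \cite{KaYa}; in both directions the engine is the spectral theorem together with Plancherel's theorem in the time variable, which converts statements about $e^{-itH}$ into statements about boundary values of the sandwiched resolvent $GR(z)G^*$, where $R(z):=(H-z)^{-1}$. (Throughout, $GR(z)G^*$ denotes the bounded extension to $\H$ of the form $(f,g)\mapsto\<R(z)G^*f,G^*g\>$, whose existence is built into the hypotheses.)

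\emph{From (1) to (2).} Polarizing and taking imaginary parts, (1) gives $\sup_{z\notin\R}\norm{G\Im R(z)G^*}<\infty$, which is precisely Kato's $H$-smoothness of $G$; the classical Kato theorem (see \cite{Kat}) then yields the homogeneous bound $\norm{Ge^{-itH}\psi_0}_{L^2(\R;\H)}\lesssim\norm{\psi_0}_\H$, the membership $e^{-itH}\psi_0\in D(G)$ a.e.\ emerging from the weak form of this estimate together with the closedness of $G$. For the retarded term, decompose $\int_0^t=\int_{-\infty}^t-\int_{-\infty}^0$ (first for $F$ with compact support in $t$, then by density). The summand $e^{-itH}\int_{-\infty}^0 e^{isH}G^*F(s)\,ds$ is controlled by the homogeneous estimate applied to the vector $\int_{-\infty}^0 e^{isH}G^*F(s)\,ds$, whose norm is $\lesssim\norm{F}_{L^2(\R;\H)}$ by the adjoint of that estimate. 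The summand $w(t)=\int_{-\infty}^t e^{-i(t-s)H}G^*F(s)\,ds$ is a convolution in $t$ with operator kernel $\mathbf 1_{r\ge0}e^{-irH}$; computing its $t$-Fourier transform with an $i\ep$-regularization turns $Gw$ into multiplication by $-iGR(-\tau+i0)G^*$ on $\widehat F(\tau)$, so Plancherel gives $\norm{Gw}_{L^2_t}\lesssim\big(\sup_{z\notin\R}\norm{GR(z)G^*}\big)\norm{F}_{L^2_t}$. The supremum is finite by (1): the functions $z\mapsto\<GR(z)G^*f,g\>$ are bounded and analytic on each half-plane, so by Fatou the boundary values $GR(\mu\mp i0)G^*$ exist for a.e.\ $\mu$ with the same bound (alternatively, keep $\ep>0$ throughout and pass to the limit at the very end). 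It is here that full supersmoothness --- the whole resolvent, not merely its imaginary part --- is genuinely used.

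\emph{From (2) to (1).} The homogeneous estimate again controls $\<G\Im R(z)G^*f,f\>$ (it is equivalent to $H$-smoothness of $G$). To recover the full resolvent, feed the inhomogeneous estimate a family of exponential test functions: for $\Im z>0$ and $h\in D(G^*)$ set $F(s)=\widetilde F(s)=e^{-i\bar z s}\mathbf 1_{s>0}h$, truncated to $s<T$ so as to lie in $L^2$, with $\norm{F}_{L^2_t}^2\le\norm{h}^2/(2\Im z)$. Solving the Duhamel ODE gives, for $t>0$, $\int_0^t e^{-i(t-s)H}G^*F(s)\,ds=\tfrac1i\big(e^{-it\bar z}-e^{-itH}\big)R(\bar z)G^*h$. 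Applying $G$, pairing against $\widetilde F(t)$, integrating in $t$ and letting $T\to\infty$: the $e^{-it\bar z}$ part contributes $\tfrac1{2i\Im z}\<GR(\bar z)G^*h,h\>$, and the $e^{-itH}$ part contributes $\tfrac1{\Im z}\<G\Im R(z)G^*h,h\>$ (using $\int_0^\infty e^{izt}e^{-itH}\,dt=-iR(z)$ and $R(z)R(\bar z)=\tfrac1{\Im z}\Im R(z)$ for $\Im z>0$), while the inhomogeneous estimate bounds the sum by $C\norm{F}_{L^2_t}\norm{\widetilde F}_{L^2_t}\lesssim\norm{h}^2/(2\Im z)$. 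Multiplying through by $2\Im z$ and discarding the already-controlled imaginary-part term yields $|\<GR(\bar z)G^*h,h\>|\lesssim\norm{h}^2$ uniformly; the case $\Im z<0$ follows by conjugation (or the symmetric computation), and since $\<GR(z)G^*h,h\>=\<R(z)G^*h,G^*h\>$ this is exactly (1).

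\emph{The main obstacle} is the retarded half of (1)$\Rightarrow$(2): making rigorous the passage from $\int_0^t\cdots$ to a time--Fourier multiplier and identifying that multiplier with $GR(\mu\mp i0)G^*$. This requires care with (i) the merely conditional convergence of the half-line integrals, handled by first restricting to $F$ of compact time support; (ii) the existence and, crucially, the \emph{uniform} boundedness of the a.e.\ boundary values of $GR(z)G^*$ --- exactly the point where supersmoothness, rather than plain Kato smoothness, is invoked; and (iii) commuting the closed operator $G^*$ past the time integral and the Fourier transform, legitimate on a dense class of $F$. Everything else --- the spectral and Plancherel identities, the $\ep$-regularizations, and the density and closed-operator bookkeeping --- is routine.
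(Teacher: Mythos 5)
The paper does not prove this lemma itself; it defers to \cite[Theorem 5.1]{Kat} and \cite[Theorem 2.3]{DAn}, so there is no in-paper proof to compare against. Your reconstruction is correct and follows what those references do: the forward implication reduces the homogeneous bound to Kato's classical $H$-smoothness theorem (imaginary-part only), and the retarded term is split into a global convolution handled by Plancherel in time --- whose multiplier is the causal boundary value $-iGR(-\tau+i0)G^*$, which is exactly where full supersmoothness rather than mere $H$-smoothness enters --- plus a piece absorbed into the homogeneous bound and its adjoint; the reverse implication tests the retarded estimate against truncated exponentials $e^{-i\bar z s}\mathbf 1_{s>0}h$ and uses the explicit Duhamel solution and the identity $R(z)R(\bar z)=(\Im z)^{-1}\Im R(z)$. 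One cosmetic remark: in the converse direction you do not actually need to discard the $\Im R$ term separately, since $-ic-2\Im c=-i\bar c$ for $c=\langle GR(\bar z)G^{*}h,h\rangle$, so the algebraic identity already hands you $|\langle GR(z)G^{*}h,h\rangle|\lesssim\|h\|^{2}$ directly; your version, which invokes the homogeneous bound to remove the imaginary part, is also fine. (You also correctly read the statement's ``$e^{-itA}$'' as a typo for ``$e^{-itH}$''.) No gap.
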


Thus we have finished all the proofs of uniform resolvent and Kato smoothing estimates.

%section
\section{Strichartz estimates}
\label{section_3}
\vskip0.2cm
In this section we will  prove Theorems \ref{theorem_3} and \ref{theorem_4}.
Let us first recall corresponding Strichartz estimates for the free evolutions. Given a self-adjoint operator $H$, we set
$$
\Gamma_{H}F(t,x):=\int_0^te^{-i(t-s)H}F(s,x)ds.
$$

%{lemma}
\begin{lemma}
\label{lemma_section_3_1}
Let $n\ge1$, $\sigma>0$, $\sigma\neq1/2$, $H_0=(-\Delta)^\sigma$ and $(p,q),(\tilde p,\tilde q)$ be $n/2$-admissible. Then $e^{-itH_0}$ satisfies the following Strichartz estimates with a gain or loss of regularities:
\begin{align}
\label{lemma_section_3_1_1}
\norm{|D|^{2(\sigma-1)/p}e^{-itH_0}\psi_0}_{L^p_tL^{q,2}_x}&\lesssim \norm{\psi_0}_{L^2_x};
\end{align}
\begin{align}
\label{lemma_section_3_1_2}
\||D|^{2(\sigma-1)/p}\Gamma_{H_0}F\|_{L^p_tL^{q,2}_x}&\lesssim \norm{|D|^{2(1-\sigma)/\tilde p}F}_{L^{\tilde p'}_tL^{\tilde q',2}_x}.
\end{align}
\end{lemma}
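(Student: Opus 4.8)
\emph{Approach.} Both estimates are classical for the free fractional propagator (see \cite{KPV}, \cite{KeTa}, \cite{Guo}), and I would prove them by the standard $TT^*$/Littlewood--Paley scheme: reduce, via the homogeneous decomposition $\{\varphi_j(D)\}_{j\in\Z}$ and parabolic scaling, to the same estimates for data and forcing localized at unit frequency; obtain those from a single fixed-frequency dispersive bound through the Keel--Tao machinery \cite{KeTa}; and then sum the dyadic pieces. The routine computations I would relegate to Appendix \ref{appendix_A}.

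\emph{Step 1: the unit-frequency dispersive estimate.} Set $U(t):=e^{-itH_0}\varphi_0(D)$, with convolution kernel $(2\pi)^{-n}\int e^{i(x\cdot\xi-t|\xi|^{2\sigma})}\varphi_0(\xi)\,d\xi$. For $|t|\le1$ this is bounded uniformly in $x$ by Young's inequality (the amplitude being Schwartz in $x$), while for $|t|\ge1$ stationary phase applies: on the support of $\varphi_0$ the Hessian of the phase $|\xi|^{2\sigma}$ has eigenvalue $2\sigma$ with multiplicity $n-1$ (tangential directions) and eigenvalue $2\sigma(2\sigma-1)$ in the radial direction, hence is nondegenerate precisely because $\sigma\ne0,\tfrac12$, and the standard oscillatory-integral bound gives decay $|t|^{-n/2}$. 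Together with the trivial $L^2$ bound this yields $\norm{U(t)}_{L^2\to L^2}\lesssim1$ and $\norm{U(t)U(s)^*}_{L^1\to L^\infty}\lesssim|t-s|^{-n/2}$. This is the sole reason $\sigma=\tfrac12$ is excluded: for $H_0=|D|$ the radial Hessian eigenvalue vanishes, the decay degenerates to $|t|^{-(n-1)/2}$, and only $(n-1)/2$-admissible pairs survive.

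\emph{Step 2: unit-frequency Strichartz, scaling, and summation.} Feeding Step 1 into the Keel--Tao theorem \cite{KeTa} (with decay exponent $\mu=n/2$) gives, for all $n/2$-admissible $(p,q),(\tilde p,\tilde q)$ --- including the double endpoint $\bigl(2,\tfrac{2n}{n-2}\bigr)$ when $n\ge3$, and with the Lorentz refinement $L^{q,2}_x$ already built in --- both $\norm{e^{-itH_0}\psi_0}_{L^p_tL^{q,2}_x}\lesssim\norm{\psi_0}_{L^2}$ and the retarded analogue for $\Gamma_{H_0}F$, for data and forcing supported in $\{|\xi|\sim1\}$. (Away from the double endpoint one could instead descend from the untruncated estimate via Christ--Kiselev's lemma \cite{ChKi}.) For frequency $2^j$, the scaling identity $e^{-itH_0}\psi_0(x)=\bigl(e^{-i(2^{2\sigma j}t)H_0}\psi_0(2^{-j}\cdot)\bigr)(2^jx)$ upgrades this to $\norm{e^{-itH_0}\varphi_j(D)\psi_0}_{L^p_tL^{q,2}_x}\lesssim2^{2j(1-\sigma)/p}\norm{\varphi_j(D)\psi_0}_{L^2}$, the exponent being forced by the admissibility relation $n(1/2-1/q)=2/p$; equivalently the factor $2^{2j(1-\sigma)/p}$ is absorbed into $|D|^{2(\sigma-1)/p}$. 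Since the pieces $|D|^{2(\sigma-1)/p}e^{-itH_0}\varphi_j(D)\psi_0$ are frequency-separated for each fixed $t$, the Littlewood--Paley square-function inequality in $L^{q,2}_x$ and Minkowski's inequality --- both valid because $p,q\ge2$ --- sum them to \eqref{lemma_section_3_1_1}. For \eqref{lemma_section_3_1_2} one proceeds identically: $\Gamma_{H_0}$ commutes with $\varphi_j(D)$ and preserves frequency support, so the output side is summed by the square-function inequality and the input side by its dual form, an $\ell^2$--Minkowski step in the direction suited to the conjugate exponents $\tilde p',\tilde q'\le2$.

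\emph{Main difficulty.} Nothing here is new, so the substance is bookkeeping: the genuinely delicate points are the double-endpoint retarded estimate, which needs the Keel--Tao bilinear argument because $p=\tilde p=2$ falls outside the range of Christ--Kiselev's lemma, and carrying the Lorentz refinement $L^{q,2}_x$ intact through the dispersive estimate, the abstract theorem, and the dyadic square-function summation.
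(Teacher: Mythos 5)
Your proposal is correct and follows essentially the same route the paper takes in Appendix \ref{appendix_A} (Lemma \ref{lemma_A_1}): a fixed-frequency stationary-phase dispersive bound (the paper also observes $|\mathrm{Hess}(|\xi|^{2\sigma})|\sim 1$ on the support of the cutoff, which is exactly your $\sigma\neq 0,\tfrac12$ nondegeneracy point), Keel--Tao at a single dyadic scale, parabolic rescaling to peel off the factor $2^{2j(1-\sigma)/p}$, and then $\ell^2$-summation via the Lorentz-refined Littlewood--Paley square-function estimates \eqref{LP_1}--\eqref{LP_2} proved by real interpolation. The only cosmetic difference is that the paper rescales time inside the Keel--Tao application (via $U(t)=\Phi_j(D)e^{-iN_j tH_0}$ with $N_j=2^{-2j(\sigma-1)}$) rather than rescaling the resulting unit-frequency Strichartz estimate afterwards, and it proves the lemma simultaneously for the more general symbol \eqref{P_0_2}; for $(-\Delta)^\sigma$ these are the same argument.
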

\vskip0.2cm

%{lemma}
\begin{lemma}
\label{lemma_section_3_2}
Let $n\ge1$, $1/2<\sigma\le1$ $H_0=(-\Delta)^\sigma$  and $(p_1,q_1)$ satisfy
$$
2\le p_1,q_1\le \infty,\quad 1/p_1\le (n-1/2)(1/2-1/q_1),\quad (p_1,q_1)\neq (2,(4n-2)/(2n-3))
$$
and $s_1=-n(1/2-1/q_1)+2\sigma/p_1$. Then $e^{-itH}$ satisfies
\begin{align}
\label{lemma_section_3_2_1}
\norm{|D|^{s_1}e^{-itH_0}\psi_0}_{L^{p_1}_tB[\mathcal L^{q_1}_rL^2_\omega]}&\lesssim \norm{\psi_0}_{L^2_x}.
\end{align}
Moreover, if $n\ge3$, $q_1,q_2>(4n-2)/(2n-3)$ and $F$ is radially symmetric, then one has
\begin{align}
\label{lemma_section_3_2_2}
\||D|^{s(2,q_1)}\Gamma_{H_0}F\|_{L^2_tL^{q_1,2}_x}&\lesssim \| |D|^{-s(2,q_2)}F\|_{L^{2}_tL^{q_2',2}_x},
\end{align}
where $s(2,q_j)=-n(1/2-1/q_j)+\sigma$.
\end{lemma}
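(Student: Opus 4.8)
The plan is to derive both displays from their frequency-localised analogues at frequency one, via the Littlewood--Paley structure built into $B[\mathcal L^{q}_rL^2_\omega]$ together with the $H_0$-adapted scaling $x\mapsto 2^{-k}x$, $t\mapsto 2^{-2\sigma k}t$. Writing $|D|^{s_1}\varphi_k(D)=2^{ks_1}\psi(2^{-k}D)$ with $\psi(\xi):=\varphi(\xi)|\xi|^{s_1}\in C_0^\infty$ supported away from $0$, using Minkowski's inequality in $t$ and in $r$ (legitimate because $p_1,q_1\ge 2$) to pull the $\ell^2_k$ sum of \eqref{Besov} outside the $L^{p_1}_t\mathcal L^{q_1}_r$ norm, and rescaling each dyadic block, one checks that $s_1=-n(1/2-1/q_1)+2\sigma/p_1$ is exactly the exponent for which every rescaled block reduces to the single estimate
$$\norm{e^{-itH_0}g}_{L^{p_1}_t\mathcal L^{q_1}_rL^2_\omega}\lesssim\norm{g}_{L^2_x},\qquad \supp\hat g\subset\{\,1/8\le|\xi|\le 8\,\},$$
the $\ell^2_k$ sum being recovered afterwards by almost orthogonality. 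The same reduction, combined with part~(i) of Remark~\ref{remark_theorem_4_1} (which, on radial functions, replaces $B[\mathcal L^{q}_rL^2_\omega]$ by $L^{q}$, equivalently $L^{q,2}$), reduces \eqref{lemma_section_3_2_2} to a frequency-localised retarded estimate with time exponent $2$ on both sides, and in that case only the lowest ($k=0$) sector below is relevant.

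For the frequency-localised homogeneous estimate I would expand into spherical harmonics, $g=\sum_k g_k$ with $g_k$ in the $k$-th eigenspace of the Laplace--Beltrami operator on $\Sphere^{n-1}$. Since $H_0=(-\Delta)^\sigma$ commutes with rotations, $e^{-itH_0}$ preserves each sector and acts there as a Hankel-type propagator of order $\nu_k=k+(n-2)/2$; by Plancherel in the angular variable (again using $p_1,q_1\ge2$ and Minkowski) the left-hand side is controlled by $\big(\sum_k\norm{e^{-itH_0}g_k}_{L^{p_1}_t\mathcal L^{q_1}_rL^2_\omega}^2\big)^{1/2}$, so it suffices to bound each sector by $\norm{g_k}_{L^2}$ with a constant uniform in $k$. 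Each such bound would follow from a $TT^*$ argument: the kernel of the frequency-localised $k$-sector of $e^{-i(t-s)H_0}$ is an oscillatory Hankel integral, and uniform-in-$\nu_k$ asymptotics of Bessel functions — including the Airy-type transition regime — yield a dispersive decay of order $|t-s|^{-\beta}$, with $\beta$ matched to the Hardy--Littlewood--Sobolev exponent attached to $(p_1,q_1)$. The gain of a half power of decay over the non-radial rate is precisely what raises the admissibility line from $1/p=\tfrac n2(1/2-1/q)$ to $1/p\le(n-\tfrac12)(1/2-1/q)$, and the excluded pair $\big(2,(4n-2)/(2n-3)\big)$ sits at the borderline where the $TT^*$ integral in time just diverges.

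For \eqref{lemma_section_3_2_2} the two time exponents are both equal to $2$, so Christ--Kiselev's lemma does not apply and the retarded operator $\Gamma_{H_0}$ cannot be dominated by the untruncated propagator; instead I would run a Keel--Tao type bilinear endpoint argument, decomposing $|t-s|\sim 2^m$ dyadically, using on each piece the frequency-localised improved radial dispersive bound together with $L^2$ conservation, and interpolating between the resulting off-diagonal $L^{a}_x\to L^{b}_x$ estimates with $a\ne b$ to sum the $m$-series; the strict conditions $q_1,q_2>(4n-2)/(2n-3)$ provide exactly the off-diagonal room this needs, and the radial hypothesis on $F$ is what makes the improved dispersive bound available. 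I expect the main obstacle to be the uniform control of the Hankel/Bessel oscillatory integrals across the full range of $\nu_k$ and of the radial variables $r,\rho$, in particular handling the transition zone where the phase degenerates and plain stationary phase fails; a secondary point is organising the Keel--Tao endpoint summation so that it closes uniformly after the Littlewood--Paley and scaling reductions. Both are by now standard in the spherically-averaged Strichartz literature for $H_0=(-\Delta)^\sigma$ (cf.\ \cite{Guo}, \cite{GLNY}), and for $1/2<\sigma\le1$ the bulk stationary-phase analysis is non-degenerate.
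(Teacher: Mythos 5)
Your high-level reduction — Littlewood--Paley decomposition, the $H_0$-homogeneous scaling $t\mapsto 2^{-2\sigma k}t$, $x\mapsto 2^{-k}x$ to bring each block to unit frequency, Minkowski for $p_1,q_1\ge 2$, and Plancherel to close the $\ell^2_k$ sum — is precisely what the paper does. Where you diverge is that the paper simply \emph{cites} \cite{GLNY} for the frequency-one estimates
$\norm{e^{-itH_0}\Phi(D)\psi_0}_{L^{p_1}_t\mathcal L^{q_1}_rL^2_\omega}\lesssim \norm{\psi_0}_{L^2}$ and
$\norm{\Gamma_{H_0}\Phi(D)F}_{L^{2}_t\mathcal L^{q_1}_rL^2_\omega}\lesssim \|F\|_{L^{2}_t\mathcal L^{q_2'}_rL^2_\omega}$, whereas you attempt to re-derive them from scratch via spherical-harmonics decomposition, uniform Bessel/Hankel asymptotics including the Airy transition regime, and a Keel--Tao bilinear argument at the double-$L^2_t$ endpoint. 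That is a much larger undertaking than the paper's proof, and you rightly flag the uniform-in-order oscillatory-integral control as the hard point; since you are essentially reconstructing the argument of \cite{Guo,GLNY}, this is not wrong, but it is not what the paper does.

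The concrete slip is the claim that on radial functions one can replace $B[\mathcal L^{q}_rL^2_\omega]$ by ``$L^{q}$, equivalently $L^{q,2}$.'' These are not equivalent for $q\ne 2$, and the distinction matters: the conclusion \eqref{lemma_section_3_2_2} has $L^{q_1,2}$ (a strict subspace of $L^{q_1}$ when $q_1>2$) on the left and $L^{q_2',2}$ (a strict superspace of $L^{q_2'}$ when $q_2'<2$) on the right, so the Lorentz refinement strengthens both sides and is not a free by-product of Remark~\ref{remark_theorem_4_1}(i). The paper obtains this refinement by a separate real-interpolation step after passing to $L^q$ under radial symmetry, and then closes the dyadic sum with the Lorentz-refined square-function bounds \eqref{LP_1}--\eqref{LP_2}. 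Your proposal as written omits the interpolation and only speaks vaguely of ``almost orthogonality,'' so it would deliver \eqref{lemma_section_3_2_2} with $L^{q_1}$ and $L^{q_2'}$ in place of the stated Lorentz norms — a strictly weaker statement. This could likely be repaired by running the Keel--Tao endpoint machinery so that it outputs Lorentz norms directly (as the version quoted in Appendix~\ref{appendix_A} does) and then invoking \eqref{LP_1}--\eqref{LP_2}, but as presented the Lorentz upgrade is missing.
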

\vskip0.2cm
Lemma \ref{lemma_section_3_1} is well-known and Lemma \ref{lemma_section_3_2} was obtained by \cite{Guo} and \cite{GLNY}. We will give their proofs in Appendix \ref{appendix_A} for the sake of self-containedness. In addition to Theorem \ref{theorem_2} and these lemmas, the proof of Theorems \ref{theorem_3} and \ref{theorem_4} relies on the following abstract perturbation method due to Rodnianski--Schlag \cite{RoSc} (see also Burq et al \cite{BPST2} for the homogeneous endpoint case and \cite{BoMi} for the inhomogeneous endpoint case).

Let $\X,\Y$ be two Banach spaces of functions on $\R^n$ such that $\X\cap L^2$ (resp. $\Y\cap L^2$) is dense in $\X$ (resp. $\Y$). Let $H_0$ be a self-adjoint operator on $L^2(\R^n)$ with form domain $\H^\sigma$ and $V\in L^{n/(2\sigma),\infty}(\R^n)$ be a real-valued potential such that the form sum $H=H_0+V$ defines a self-adjoint operator  with form domain $\H^\sigma$. Let $W_1,W_2\in L^{n/\sigma,\infty}(\R^n)$ be such that $V=W_1W_2$. Consider the following series of estimates:
\begin{align}
\label{lemma_section_3_3_1}
\norm{e^{-itH_0}\psi_0}_{L^p_t\Y}&\lesssim \norm{\psi_0}_{L^2_x},\\
\label{lemma_section_3_3_2}
\norm{\Gamma_{H_0}F}_{L^2_t\Y}&\lesssim \norm{F}_{L^2_t\X},\\
\label{lemma_section_3_3_3}
\norm{W_1e^{-itH_0}\psi_0}_{L^2_tL^2_x}&\lesssim \norm{\psi_0}_{L^2_x},\\
\label{lemma_section_3_3_4}
\norm{\Gamma_{H_0}W_1F}_{L^2_t\Y}&\lesssim \norm{F}_{L^2_tL^2_x},\\
\label{lemma_section_3_3_5}
\norm{W_1\Gamma_{H_0}F}_{L^2_tL^2_x}&\lesssim \norm{F}_{L^2_t\X},\\
\label{lemma_section_3_3_6}
\norm{W_2\Gamma_{H_0}F}_{L^2_tL^2_x}&\lesssim \norm{F}_{L^2_t\X},\\
\label{lemma_section_3_3_7}
\norm{W_2e^{-itH}\psi_0}_{L^2_tL^2_x}&\lesssim \norm{\psi_0}_{L^2_x},\\
\label{lemma_section_3_3_8}
\norm{W_2\Gamma_{H}W_2F}_{L^2_tL^2_x}&\lesssim \norm{F}_{L^2_tL^2_x}.
\end{align}
\vskip0.3cm
%{lemma}
\begin{lemma}
\label{lemma_section_3_3}
Under the above setting, the following statements are satisfied:
\vskip0.2cm
\begin{itemize}
\item[(1)] \underline{The endpoint case}: if \eqref{lemma_section_3_3_1} with $p=2$, \eqref{lemma_section_3_3_4} and \eqref{lemma_section_3_3_7} hold, then one has
\begin{align}
\label{lemma_section_3_3_9}
\norm{e^{-itH}\psi_0}_{L^2_t\Y}&\lesssim \norm{\psi_0}_{L^2_x}.
\end{align}
Moreover, if \eqref{lemma_section_3_3_2}, \eqref{lemma_section_3_3_4}--\eqref{lemma_section_3_3_6} and \eqref{lemma_section_3_3_8} hold, then one has
\begin{align}
\label{lemma_section_3_3_10}
\norm{\Gamma_{H}F}_{L^2_t\Y}&\lesssim \norm{F}_{L^2_t\X}.
\end{align}
\item[(2)] \underline{The non-endpoint case}: if \eqref{lemma_section_3_3_1} with $p>2$, \eqref{lemma_section_3_3_3} and \eqref{lemma_section_3_3_7} hold, then one has
\begin{align}
\label{lemma_section_3_3_11}
\norm{e^{-itH}\psi_0}_{L^p_t\Y}&\lesssim \norm{\psi_0}_{L^2_x}.
\end{align}
\end{itemize}
\end{lemma}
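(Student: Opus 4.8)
The plan is to run the Duhamel perturbation expansion relating $e^{-itH},\Gamma_H$ to $e^{-itH_0},\Gamma_{H_0}$ and to estimate each resulting term using the list \eqref{lemma_section_3_3_1}--\eqref{lemma_section_3_3_8}, exploiting throughout that $W_1$ and $W_2$ act as multiplication operators, hence commute, so that $V=W_1W_2=W_2W_1$ and one may split $V$ in whichever order is convenient. All the identities below should first be established for, say, bounded compactly supported $V$ and for data in $\X\cap L^2$, $\Y\cap L^2$ or $L^2$, so that every operator product occurring is a genuine composition of bounded operators; one then extends to the stated generality by the density of $\X\cap L^2$ in $\X$ and of $\Y\cap L^2$ in $\Y$, the a priori bounds, and a limiting argument using the self-adjointness of $H$ and the form-boundedness built into the hypotheses.

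\emph{The homogeneous estimates \eqref{lemma_section_3_3_9} and \eqref{lemma_section_3_3_11}.} From $e^{-itH}\psi_0=e^{-itH_0}\psi_0+i\,\Gamma_{H_0}\big(V e^{-i(\cdot)H}\psi_0\big)$ and $V=W_1W_2$, the inhomogeneous part equals $i\,\Gamma_{H_0}W_1\big(W_2 e^{-i(\cdot)H}\psi_0\big)$. For \eqref{lemma_section_3_3_9} I would bound $W_2 e^{-i(\cdot)H}\psi_0$ in $L^2_tL^2_x$ by \eqref{lemma_section_3_3_7} and then apply the retarded estimate \eqref{lemma_section_3_3_4}, while the free term is \eqref{lemma_section_3_3_1} with $p=2$. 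For \eqref{lemma_section_3_3_11} with $p>2$, the retarded map $F\mapsto\Gamma_{H_0}W_1F$ into $L^p_t\Y$ is not among the hypotheses, so instead I would bound the non-retarded operator $F\mapsto e^{-itH_0}\int_{\R}e^{isH_0}W_1F(s)\,ds$ from $L^2_tL^2_x$ to $L^p_t\Y$ by composing \eqref{lemma_section_3_3_1} (with $p>2$) with the adjoint of \eqref{lemma_section_3_3_3}, and then recover the boundedness of the retarded version by Christ--Kiselev's lemma \cite{ChKi}, which applies precisely because the input exponent $2$ is strictly smaller than the output exponent $p$; plugging in $F=W_2 e^{-i(\cdot)H}\psi_0$ and using \eqref{lemma_section_3_3_7} finishes the argument. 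This also explains why the strictly retarded estimate \eqref{lemma_section_3_3_4} has to be assumed in the endpoint case, where Christ--Kiselev is unavailable.

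\emph{The inhomogeneous endpoint estimate \eqref{lemma_section_3_3_10}.} Iterating the two Duhamel identities for $\Gamma_H$ once produces
$$\Gamma_H=\Gamma_{H_0}+i\,\Gamma_{H_0}V\Gamma_{H_0}-\Gamma_{H_0}V\Gamma_H V\Gamma_{H_0}.$$
The first term is handled by \eqref{lemma_section_3_3_2}. For the second I would factor $\Gamma_{H_0}V\Gamma_{H_0}=(\Gamma_{H_0}W_1)(W_2\Gamma_{H_0})$ and bound it by \eqref{lemma_section_3_3_6} followed by \eqref{lemma_section_3_3_4}. For the third term the commutation of the multipliers $W_1,W_2$ is the key point: it lets one write
$$\Gamma_{H_0}V\Gamma_H V\Gamma_{H_0}=(\Gamma_{H_0}W_1)\,(W_2\Gamma_H W_2)\,(W_1\Gamma_{H_0}),$$
and then, reading right to left, apply \eqref{lemma_section_3_3_5}, then \eqref{lemma_section_3_3_8}, then \eqref{lemma_section_3_3_4}. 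Summing the three contributions gives \eqref{lemma_section_3_3_10}.

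The genuinely delicate step, and the one I expect to be the main obstacle, is not this bookkeeping but justifying the Duhamel identities together with all the intermediate operator products in the regime where $V,W_1,W_2$ lie only in weak Lebesgue spaces and not in $L^\infty$: one must check that each composition appearing in the expansion is a well-defined bounded operator between the indicated Banach spaces (the hypotheses \eqref{lemma_section_3_3_1}--\eqref{lemma_section_3_3_8} are what make this possible) and that the regularization of $V$, $\psi_0$ and $F$ passes to the limit coherently, using the self-adjointness of $H$ so that $e^{-itH}$ and $\Gamma_H$ make sense on $L^2$, and the form-boundedness of $V$ relative to $H_0$. Once this framework is in place the proof reduces to the chains of estimates sketched above.
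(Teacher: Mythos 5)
Your proposal is correct and follows essentially the same route as the paper: the same three Duhamel formulas, the same factorizations $V=W_1W_2$, Christ--Kiselev for the non-endpoint retarded estimate, and the same assignment of hypotheses \eqref{lemma_section_3_3_1}--\eqref{lemma_section_3_3_8} to each factor. The only (cosmetic) difference is in the inhomogeneous endpoint case: the paper runs a two-step bootstrap, first reducing $\norm{\Gamma_HF}_{L^2_t\Y}$ to $\norm{W_2\Gamma_HF}_{L^2_tL^2}$ and then estimating the latter separately, whereas you iterate the Duhamel identity once to write $\Gamma_H=\Gamma_{H_0}-i\,\Gamma_{H_0}V\Gamma_{H_0}-\Gamma_{H_0}V\Gamma_H V\Gamma_{H_0}$ and bound the three terms directly (note the sign of the middle term; this is harmless for the estimate), which is the same algebra written out in one go.
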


%proof
\begin{proof}
The complete proof of the lemma can be found in \cite[Theorems 4.9 and 4.10]{BoMi}. We here give a brief sketch of the proof for the sake of self-containedness.

We begin with the following Duhamel formulas (see \cite[Proposition 4.4]{BoMi}):
\begin{align}
\label{lemma_section_3_3_proof_1}
U_H&=U_{H_0}-i\Gamma_{H_0}VU_H,\\
\label{lemma_section_3_3_proof_2}
\Gamma_H
&=\Gamma_{H_0}-i\Gamma_{H_0}V\Gamma_{H},\\
\label{lemma_section_3_3_proof_3}
\Gamma_H&=\Gamma_{H_0}-i\Gamma_{H}V\Gamma_{H_0},
\end{align}
where $U_{H_0}=e^{-itH_0}$ and $U_H=e^{-itH}$ (Strictly speaking, these formulas should be regarded in the sense of quadratic forms. We omit the details for simplicity and refer to \cite[Section 4]{BoMi}). Then the homogeneous endpoint estimate \eqref{lemma_section_3_3_9} is easy to obtain as follows:
\begin{align*}
\norm{U_{H}\psi_0}_{L^2_t\Y}
&\le \norm{U_{H_0}\psi_0}_{L^2_t\Y}+\norm{\Gamma_{H_0}VU_H\psi_0}_{L^2_t\Y}\\
&\lesssim \norm{\psi_0}_{L^2_x}+\norm{\Gamma_{H_0}W_1}_{L^2_tL^2_x\to L^2_t\Y}\norm{W_2U_H\psi_0}_{L^2_tL^2_x}
\lesssim \norm{\psi_0}_{L^2_x},
\end{align*}
where we have used \eqref{lemma_section_3_3_proof_1}, \eqref{lemma_section_3_3_1}, \eqref{lemma_section_3_3_4} and \eqref{lemma_section_3_3_7}.

In order to derive \eqref{lemma_section_3_3_10}, we first use \eqref{lemma_section_3_3_proof_2}, \eqref{lemma_section_3_3_2} and \eqref{lemma_section_3_3_4} to obtain
\begin{align*}
\norm{\Gamma_{H}F}_{L^2_t\Y}
&\le\norm{\Gamma_{H_0}F}_{L^2_t\Y}+\norm{\Gamma_{H_0}V\Gamma_{H}F}_{L^2_t\Y},\\
&\lesssim \norm{F}_{L^2_t\X}+\norm{\Gamma_{H_0}W_1}_{L^2_tL^2_x\to L^2_t\Y}\norm{W_2\Gamma_{H}F}_{L^2_tL^2_x},\\
&\lesssim \norm{F}_{L^2_t\X}+\norm{W_2\Gamma_{H}F}_{L^2_tL^2_x}.
\end{align*}
Applying \eqref{lemma_section_3_3_proof_3}, \eqref{lemma_section_3_3_6}, \eqref{lemma_section_3_3_9} and \eqref{lemma_section_3_3_5} to the term $\norm{W_2\Gamma_{H}F}_{L^2_tL^2_x}$ then yields
\begin{align*}
\norm{W_2\Gamma_{H}F}_{L^2_tL^2_x}
&\le \norm{W_2\Gamma_{H_0}F}_{L^2_tL^2_x}+\norm{W_2\Gamma_{H}V\Gamma_{H_0}F}_{L^2_tL^2_x}\\
&\lesssim \norm{F}_{L^2_t\X}+\norm{W_2\Gamma_{H}W_2}_{L^2_tL^2_x\to L^2_tL^2_x}\norm{W_1\Gamma_{H_0}F}_{L^2_tL^2_x}\\
&\lesssim \norm{F}_{L^2_t\X},
\end{align*}
and \eqref{lemma_section_3_3_10} follows.

To prove \eqref{lemma_section_3_3_11}, by the same argument as above, it is enough to show the bound
$$
\norm{\Gamma_{H_0}W_1F}_{L^p_t\Y}\lesssim \norm{F}_{L^2_tL^2_x}.
$$
Since $p>2$, Christ--Kiselev's lemma (see Appendix \ref{appendix_misc} (vi)) allows us to replace the time interval $[0,t]$ in the formula of $\Gamma_{H_0}$ by $[0,\infty)$. Hence it suffices to show the estimate
$$
\left\|e^{-itH_0}\int_0^\infty e^{isH_0}W_1F(s)ds\right\|_{L^p_t\Y}\lesssim \norm{F}_{L^2_tL^2_x}
$$
which follows from \eqref{lemma_section_3_3_1} and the dual estimate of \eqref{lemma_section_3_3_3}.
\end{proof}
\vskip0.3cm
In the following proofs of Theorems \ref{theorem_3} and \ref{theorem_4}, we set for short that
\begin{align}
\label{2^*}
2^*(\sigma)=\frac{2n}{n-2\sigma},\ \ 2_*(\sigma)=(2^*(\sigma))'=\frac{2n}{n+2\sigma},\ \ 2^*=\frac{2n}{n-2},\ \  2_*=\frac{2n}{n+2}.
\end{align}
\vskip0.3cm
%proof
\begin{proof}[\underline{Proof of Theorem \ref{theorem_3}}]
By virtue of \eqref{Sobolev_2}, it suffices to show \eqref{theorem_3_2} only. Let $(p,q)$ be $n/2$-admissible, and let $W_1=|x|^\sigma V\in L^{n/\sigma,\infty}$ and $W_2=|x|^{-\sigma}\in L^{n/\sigma,\infty}$. Define Banach spaces $\X_{2},\Y_p$ through the norms
$
\norm{f}_{\X_{2}}=\norm{|D|^{1-\sigma}f}_{L^{2_*,2}_x}$ and $\norm{f}_{\Y_p}=\norm{|D|^{2(\sigma-1)/p}f}_{L^{q,2}_x}$.
\vskip0.2cm
Let us check that the conditions in Lemma \ref{lemma_section_3_3} are satisfied. At first, Lemma \ref{lemma_section_3_1} implies
\begin{align}
\label{theorem_3_proof_1}
\norm{U_{H_0}\psi_0}_{L^p_t\Y_p}\lesssim \norm{\psi_0}_{L^2_x},\quad\norm{U_{H_0}\psi_0}_{L^2_t\Y_2}\lesssim \norm{\psi_0}_{L^2_x},\quad
\norm{\Gamma_{H_0}F}_{L^2_t\Y_2}\lesssim \norm{F}_{L^2_t\X_2}.
\end{align}
Next, note that $$1/2^*-1/2^*(\sigma)=(\sigma-1)/n,\ 1/2_*(\sigma)-1/2_*=(\sigma-1)/n,$$
$$1/2=\sigma/n+1/2^*(\sigma),\ 1/2_*(\sigma)=1/2+\sigma/n,$$
hence the inequalities \eqref{Holder} and  \eqref{Sobolev} yield that the following estimates hold for $j=1,2$:
\begin{align*}
\norm{f}_{L^{2^*(\sigma),2}}&\lesssim \norm{f}_{\Y_2},\\
 \norm{f}_{\X_{2}}&\lesssim \norm{f}_{L^{2_*(\sigma),2}},\\
\norm{W_jf}_{L^2_x}&\lesssim \norm{W_j}_{L^{n/\sigma,\infty}}\norm{f}_{L^{2^*(\sigma),2}}\lesssim  \norm{f}_{\Y_2},\\
\norm{W_jf}_{\X_{2}}&\lesssim \norm{W_jf}_{L^{2_*(\sigma),2}}\lesssim \norm{W_j}_{L^{n/\sigma,\infty}}\norm{f}_{L^2_x}\lesssim \norm{f}_{L^2_x}.
\end{align*}
Then Lemma \ref{lemma_section_3_1} with $p=2$ and these estimates listed above imply
\begin{align}
\label{theorem_3_proof_2}
\norm{W_2U_{H_0}\psi_0}_{L^2_tL^2_x}
&\lesssim \norm{U_{H_0}\psi_0}_{L^2_t\Y_2}\lesssim \norm{\psi_0}_{L^2_x},\\
\label{theorem_3_proof_3}
\norm{W_j\Gamma_{H_0}F}_{L^2_tL^2_x}&\lesssim \norm{\Gamma_{H_0}F}_{L^2_t\Y_2}\lesssim \norm{F}_{L^2_t\X_2},\quad j=1,2,\\
\label{theorem_3_proof_4}
\norm{\Gamma_{H_0}W_1G}_{L^2_t\Y_2}&\lesssim \norm{W_1G}_{L^2_t\X_2}\lesssim \norm{G}_{L^2_tL^2_x}.
\end{align}
Finally, \eqref{theorem_2_2} implies
\begin{align}
\label{theorem_3_proof_5}
\norm{W_2U_{H}\psi_0}_{L^2_tL^2_x}\lesssim \norm{\psi_0}_{L^2_x},\quad \norm{W_2\Gamma_{H}W_2F}_{L^2_tL^2_x}\lesssim\norm{F}_{L^2_tL^2_x}.
\end{align}
By \eqref{theorem_3_proof_1}--\eqref{theorem_3_proof_5}, we have obtained all of estimates \eqref{lemma_section_3_3_1}--\eqref{lemma_section_3_3_8}. Therefore, we can apply Lemma \ref{lemma_section_3_3} to obtain \eqref{theorem_3_2} for the double endpoint case with $p=\tilde p=2$ and the homogeneous non-endpoint cases with $p>2$ and $F\equiv0$. Using Christ--Kiselev's lemma (see Appendix \ref{appendix_misc}), we also obtain all of the other cases from these two cases. \end{proof}
\vskip0.2cm
%proof
\begin{proof}[\underline{Proof of Theorem \ref{theorem_4}}]
Let $W_j$ be as above. Under the condition $|x|^{2\sigma}V\in L^\infty$, $|x|^\sigma W_j\in L^\infty$ and hence $W_1,W_2$ are $H_0$- and $H$-supersmooth by \eqref{theorem_2_2}. In particular, one has
\begin{align}
\label{theorem_4_proof_1}
\norm{W_jU_{H_0}\psi}_{L^2_tL^2_x}+\norm{W_jU_{H}\psi}_{L^2_tL^2_x}&\lesssim \norm{\psi_0}_{L^2_x},\quad j=1,2.
\end{align}

To obtain the estimates \eqref{theorem_3_2} for non-endpoint cases, by virtue of Lemma \ref{lemma_section_3_3} (2) with $\Y=\Y_p$ defined above, \eqref{lemma_section_3_1_1} and \eqref{theorem_4_proof_1}, it suffices to show the following bound
$$
\norm{|D|^{2(\sigma-1)/p}\Gamma_{H_0}W_1F}_{L^p_tL^q_x}\lesssim \norm{F}_{L^2_tL^2_x}.
$$
Since $p>2$, as in the proof of Lemma \ref{lemma_section_3_3}, this estimate follows from \eqref{lemma_section_3_1_1}, the dual estimate of \eqref{theorem_4_proof_1} for $j=1$ and Christ--Kiselev's lemma.
\vskip0.2cm
Let $(p_1,q_1)$ satisfy \eqref{theorem_4_1}. The estimate \eqref{theorem_4_2} can be obtained similarly. Indeed, by virtue of Lemma \ref{lemma_section_3_3} (2) with $\Y$ defined by the norm $\norm{f}_\Y=\norm{|D|^{s_1}f}_{B[\mathcal L^{q_1}_rL^2_\omega]}$, \eqref{lemma_section_3_2_1}, \eqref{theorem_4_proof_1} and Christ--Kiselev's lemma, it suffices to show the following bound
$$
\left\||D|^{s_1}e^{-itH_0}\int_0^\infty e^{isH_0}W_1F(s)ds\right\|_{L^{p_1}_tB[\mathcal L^{q_1}_rL^2_\omega]}\lesssim \norm{F}_{L^2_tL^2_x},
$$
which follows from \eqref{lemma_section_3_2_1} and the dual estimate of \eqref{theorem_4_proof_1}.

Finally, in order to obtain the endpoint estimate \eqref{theorem_4_3}, we shall apply Lemma \ref{lemma_section_3_3} (1) with the choice of $\X=|D|^{s(2,q_2)}L^{q_2',2}$ and $\Y=|D|^{-s(2,q_1)}L^{q_1,2}$ defined by the norms $$\norm{f}_{\X}=\norm{|D|^{-s(2,q_2)}f}_{L^{q_2',2}_x},\quad \norm{f}_{\Y}=\norm{|D|^{s(2,q_1)}f}_{L^{q_1,2}_x}.$$
Let $\psi_0,F,$ and $V$ be radially symmetric and $q_1,q_2>(4n-2)/(2n-3)$. Note that $\psi$ is also radially symmetric.
By Lemma \ref{lemma_section_3_2} for radially symmetric data, \eqref{lemma_section_3_3_1} and \eqref{lemma_section_3_3_2} hold. %Moreover, we obtain by O'Neil's inequality \eqref{Holder} that$$\norm{W_jf}_{L^{2_*(\sigma),2}}\lesssim \norm{f}_{L^2},\quad \norm{W_jf}_{L^2}\lesssim \norm{f}_{L^{2^*(\sigma),2}},\quad j=1,2.$$
Since the condition $\sigma>n/(2n-1)$ is equivalent to the inequality $2^*(\sigma)>(4n-2)/(2n-3)$, we can use \eqref{lemma_section_3_2_2} with $q_1$ or $q_2=2^*(\sigma)$ to obtain for radially symmetric $F,G$ that
\begin{align*}
\norm{\Gamma_{H_0}W_1F}_{L^2_t\Y}
&\lesssim \norm{W_1F}_{L^2_tL^{2_*(\sigma),2}_x}
\lesssim \norm{W_1}_{L^{n/\sigma,\infty}_x}\norm{F}_{L^2_tL^2_x},\\
\norm{W_j\Gamma_{H_0}G}_{L^2_tL^2_x}
&\lesssim \norm{W_j}_{L^{n/\sigma,\infty}_x}\norm{\Gamma_{H_0}G}_{L^2_tL^{2^*(\sigma),2}_x}
\lesssim \norm{G}_{L^2_t\X}.
\end{align*}
Hence \eqref{lemma_section_3_3_4}--\eqref{lemma_section_3_3_6} are satisfied. \eqref{lemma_section_3_3_7} and \eqref{lemma_section_3_3_8} also hold since $W_2$ is $H$-supersmooth. With the embeddings $L^{q_1,2}\hookrightarrow L^{q_1}$ and $L^{q_2'}\hookrightarrow L^{q_2',2}$ at hand, we thus can apply Lemma \ref{lemma_section_3_3} (1) to obtain the bound \eqref{theorem_4_3}. This completes the proof. 
\end{proof}

\section{Uniform Sobolev estimates}
\label{section_4}
This section is devoted to the proof of Theorem \ref{theorem_5} and Remark \ref{remark_theorem_5}. We begin with recalling uniform Sobolev estimates for the free resolvent $(H_0-z)^{-1}$.
Let \begin{align*}
A_0&=\Big(\frac{n+1}{2n},\frac{n-4\sigma+1}{2n}\Big)\ \ \text{if}\ \ \sigma\le \frac{n+1}{4},\quad
A_0=\Big(\frac{2\sigma}{n},0\Big)\ \ \text{if}\ \ \sigma\ge\frac{n+1}{4},\\
A_1&=\Big(\frac{n+2\sigma}{2n},\frac{n-2\sigma}{2n}\Big),\quad B_1=\Big(\frac{n+3}{2(n+1)},\frac{n-1}{2(n+1)}\Big),
\end{align*}
and $A_0'$ be the dual point of $A_0$, namely $A_0'=(1/q',1/p')$ if $A_0=(1/p,1/q)$.
\vskip0.1cm
Define $\Omega\subset (0,1)\times(0,1)$ be the union of the interior of the triangle $A_0B_1A_0'$ and the open line segment $A_0A_0'$  and the point $B_1$. (see Figure 1 below).
\vskip0.2cm
%{lemma}
\begin{lemma}
\label{lemma_section_4_1}
Let $n\ge3$, ${n}/{(n+1)}\le \sigma<n/2$ and $H_0=(-\Delta)^\sigma$. Then for any $(1/p,1/q)\in \Omega$,
\begin{align}
\label{lemma_section_4_1_1}
\norm{(H_0-z)^{-1}f}_{L^q}\lesssim |z|^{\frac{n}{2\sigma}(\frac1p-\frac1q)-1}\norm{f}_{L^p},\quad z\in \C\setminus[0,\infty).
\end{align}
\end{lemma}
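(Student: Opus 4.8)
The plan is to reduce \eqref{lemma_section_4_1_1} to a uniform estimate at a single spectral parameter by exploiting the exact scaling of $(-\Delta)^\sigma$, and then to carry out the argument of Kenig--Ruiz--Sogge \cite{KRS} for the boundary resolvent of $(-\Delta)^\sigma$. Since $(-\Delta)^\sigma$ is homogeneous of degree $2\sigma$, conjugating by the dilation $T_\lambda f(x)=f(\lambda x)$ with $\lambda=|z|^{1/(2\sigma)}$ gives the identity $(H_0-z)^{-1}=|z|^{-1}T_\lambda(H_0-z/|z|)^{-1}T_{1/\lambda}$, and computing how $T_\lambda$ and $T_{1/\lambda}$ act on the $L^q$ and $L^p$ norms produces exactly the factor $|z|^{\frac{n}{2\sigma}(\frac1p-\frac1q)-1}$. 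Thus it suffices to bound $\norm{(H_0-\omega)^{-1}}_{L^p\to L^q}$ uniformly for $\omega$ on the unit circle. When $\omega$ stays away from $1$ the symbol $(|\xi|^{2\sigma}-\omega)^{-1}$ is a smooth multiplier of order $-2\sigma$, and the bound throughout $\Omega$ follows from standard Fourier-multiplier estimates (Mikhlin--H\"ormander) together with Sobolev embedding; the genuine difficulty is the uniformity as $\omega\to1$, i.e. for the boundary resolvents $(H_0-1\mp i0)^{-1}$, whose symbol is singular on the unit sphere $|\xi|=1$. Note that, as $\sigma$ may be non-integer and larger than $1$, one cannot reduce this to the Helmholtz resolvent $-\Delta-w$ by a contour integral in $w$ the way one could for $\sigma\le1$, so the analysis must be done directly.

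For the boundary resolvents I would combine two complementary estimates and interpolate. On the Sobolev segment $A_0A_0'$, where $\frac1p-\frac1q=\frac{2\sigma}n$, the required bound is a uniform-in-$\omega$ Hardy--Littlewood--Sobolev estimate for the resolvent kernel: one splits $(|\xi|^{2\sigma}-1\mp i0)^{-1}$ into an elliptic part of order $-2\sigma$, controlled by boundedness of $|D|^{-2\sigma}\colon L^p\to L^q$ on that line plus Mikhlin for the high- and low-frequency cut-offs, and a compactly frequency-supported singular part; since the singular set is exactly the unit sphere and $|\xi|^{2\sigma}-1=2\sigma(|\xi|-1)(1+O(|\xi|-1))$ near it, the singular part differs from that of the Helmholtz symbol $(|\xi|^2-1\mp i0)^{-1}$ only by a bounded smooth factor, and its kernel obeys the classical Helmholtz kernel bounds (the case distinction $\sigma\le(n+1)/4$ versus $\sigma\ge(n+1)/4$ in the definition of $A_0$ only records where this Sobolev line first meets the admissible range). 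At the Stein--Tomas point $B_1=\big(\frac{n+3}{2(n+1)},\frac{n-1}{2(n+1)}\big)$ the required bound is the restriction--extension estimate for the sphere transplanted to the resolvent: decomposing the singular part dyadically in $\big||\xi|-1\big|\sim2^{-j}$, stationary phase gives the kernel bound on each shell and an $\ell^2$-orthogonality summation in $j$ yields the uniform $L^p\to L^{p'}$ bound at $B_1$. Riesz--Thorin interpolation between the estimates on $A_0A_0'$ and at $B_1$, together with their duals, then fills in the whole region $\Omega$.

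The main obstacle is this last point $B_1$ with the uniformity in $\omega$ (equivalently in $z$): it amounts to re-proving the Stein--Tomas extension estimate for the sphere in resolvent form with a constant independent of the spectral parameter, which requires the sharp oscillatory-integral bounds for the spherical-shell kernels and a careful dyadic summation, and one must also verify that the comparison with the Helmholtz symbol above is robust enough that the non-integer exponent $2\sigma$ never interferes. All of this is exactly the content of \cite{HYZ}, to which I would refer for the full details; the case $\sigma=1$ is the original theorem of \cite{KRS}.
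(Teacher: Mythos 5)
Your proposal follows essentially the same route as the paper: reduce to the $z$-dependent bound on the segment $A_0A_0'$ and at the Stein--Tomas point $B_1$ (both of which the paper pulls from \cite{HYZ}, Theorem 1.4 and Corollary 3.2), then interpolate to fill out $\Omega$; the exponent in $|z|$ is affine in $(1/p,1/q)$, so the interpolation preserves it. The extra sketch you give of how \cite{HYZ} obtains those two endpoint estimates (scaling to $|z|=1$, splitting off the spherical singularity, comparing with the Helmholtz symbol, dyadic Stein--Tomas) is accurate but ultimately, as you note, defers to \cite{HYZ}, which is exactly what the paper's proof does.
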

\vskip0.3cm
%proof
\begin{proof}[Sketch of proof]
\eqref{lemma_section_4_1_1} was proved by \cite[Theorem 1.4 and Corollary 3.2]{HYZ} if $(p,q)$ satisfies \begin{itemize}
\vskip0.2cm
\item either that $1/p-1/q=2\sigma/n$ and $2n/(n+4\sigma-1)<p<2n/(n+1)$,
\vskip0.2cm
\item or that $1/p+1/q=1$ and $2n/(n+2\sigma)\le p\le 2(n+1)/(n+3)$,
\vskip0.2cm
\end{itemize}
that is $(1/p,1/q)\in A_0A_0'\cup \overline{A_1B_1}$ (see Figure 1). Then the assertion follows from   this existing result and the interpolation theory (see Appendix \ref{appendix_misc} below).
\end{proof}

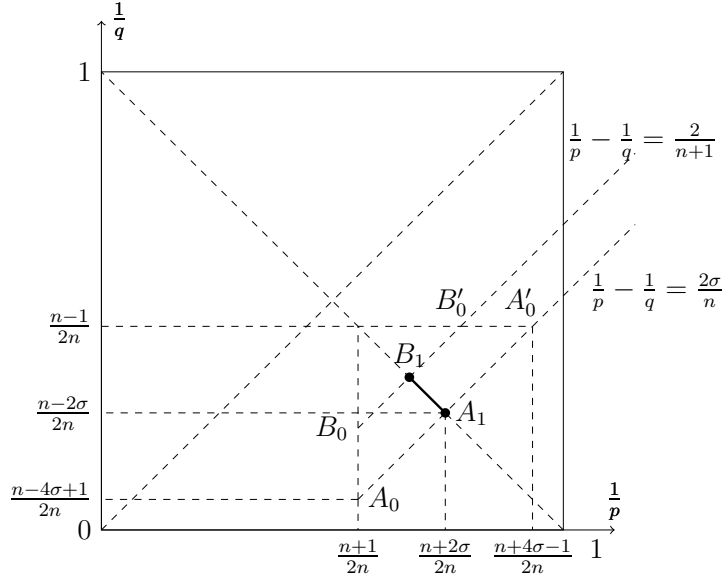
\begin{figure}[htbp]
\label{figure}
\begin{center}
\scalebox{0.9}[0.9]{
\begin{tikzpicture}

\draw (0,0) rectangle (6.75,6.75);
\draw[->]  (0,0) -- (0,7.5);
\draw[->]  (0,0) -- (7.5,0);
\draw (6.75,0) node[below] {$\ \ \ \ \ \ \ 1$};
\draw (0,6.75) node[left] {$1$};
\draw (0,0) node[below, left] {$0$};
\draw (7.5,0) node[above] {$\frac1p$};
\draw (0,7.5) node[right] {$\frac1q$};
\draw (7.5,0) node[above] {$\frac1p$};
\draw (0,7.5) node[right] {$\frac1q$};
\draw[dashed] (0,0) -- (6.75,6.75);
\draw[dashed] (6.75,0) -- (0,6.75);

\draw[dashed] (6.3,3) -- (7.8,4.5);
\draw (7,3.5) node[right] {$\frac1p-\frac1q=\frac {2\sigma}{n}$};
\draw[dashed] (5.25,3) -- (7.8,5.55);
\draw (7.9,5.30) node[above] {$\frac1p-\frac1q=\frac{2}{n+1}$};

\draw[dashed] (3.75,1.5) -- (3.75,3.0) -- (5.25,3.0);
\draw[dashed] (3.75,0.45) -- (0,0.45);
\draw[dashed] (6.3,3) -- (6.3,0);
\draw (3.75,0.45) node[right] {$A_0$}; %A_0
\draw (6.3,3)  node[above] {$\!\!\!\!\!A_0'$}; %A_1
\draw (3.75,1.5)  node[left] {$B_0$};   %B_0
\draw (5.25,3)  node[above] {$\!\!\!\!B_0'$}; %B_1
\fill (5.025,1.725) circle (2pt) node[right] {$A_1$}; %C
\fill (4.5,2.25) circle (2pt) node[above] {$B_1$};      %D
\draw[dashed] (3.75,0.45) -- (6.3,3); %AA'
\draw[dashed]  (3.75,1.5) -- (5.25,3); %BB'
\draw[dashed] (3.75,0.45) -- (3.75,1.5) ; %AB
\draw[dashed] (5.25,3) -- (6.3,3) ; %B'A'
\draw[line width=1pt] (5.025,1.725) -- (4.5,2.25); %CD

\draw[dashed] (3.75,0.45) -- (3.75,0);
\draw (3.75,0) node[below] {$\frac{n+1}{2n}$};
\draw (0,0.45) node[left] {$\frac{n-4\sigma+1}{2n}$};
\draw(6.3,0) node[below]  {$\frac{n+4\sigma-1}{2n}$};
\draw (0,3) node[left] {$\frac{n-1}{2n}$};
\draw (5.025,0) node[below] {$\frac{n+2\sigma}{2n}$};
\draw (0,1.725) node[left] {$\frac{n-2\sigma}{2n}$};
\draw[dashed] (5.025,1.725) -- (5.025,0);
\draw[dashed] (5.025,1.725) -- (0,1.725);
\draw[dashed] (0,3) -- (3.75,3.0);

\end{tikzpicture}
}
\end{center}
\caption{In case of $\sigma\le (n+1)/4$, $\Omega=\mathrm{\mathop{int}}(A_0B_1A_0')\cup A_0A_0'\cup B_1$. The admissible set of $(1/p,1/q)$ in Theorem \ref{theorem_5} is the closed line segment $\overline{A_1B_1}$. An expected optimal range for \eqref{lemma_section_4_1_1} is $\mathrm{\mathop{int}}(A_0B_0B_0'A_0')\cup A_0A_0'\cup B_0B_1$. In the case $\sigma>(n+1)/4$, $A_0,A_0'$ are the intersection points of the two lines $1/p-1/q=2\sigma/n$ and $1/q=0$ or $1/p=1$, respectively.}
\end{figure}
%remark
\begin{remark}
\label{remark_section_4_1_1}
In case of $\sigma=1$, \eqref{lemma_section_4_1_1} is known to hold if and only if
$$
\frac{2}{n+1}\le \frac1p-\frac1q\le \frac2n,\quad \frac{2n}{n+3}< p<\frac{2n}{n+1},\quad \frac{2n}{n-1}<q<\frac{2n}{n-3}
$$
(see \cite{KRS} and \cite{Gut}). For higher-order cases $\sigma\in\N$ and $\sigma\ge1$, uniform Sobolev estimates were studied by \cite{SYY} and \cite{HuZh} for more general constant coefficient elliptic operators (possibly with small decaying potentials) than $(-\Delta)^\sigma$. For Schr\"odinger operators $H=-\Delta+V$ with large potentials $V\in L^{n/2}$, we refer to \cite{HYZ}, \cite{BoMi}, \cite{Mizutani_JST} and \cite{Mizutani_APDE}.
\end{remark}

Let $R_T(z):=(T-z)^{-1}$ for $T=H_0,H$. The proof of Theorem \ref{theorem_5} is in some sense analogous to that of Theorem \ref{theorem_3} and  relies on the following abstract perturbation lemma.

%{lemma}
\begin{lemma}
\label{lemma_section_4_2}
Let $H_0$, $V=W_1W_2$, $H=H_0+V$, $\X$ and $\Y$ be as in Lemma \ref{lemma_section_3_3} and $z\notin \sigma(H_0)\cup\sigma(H)$. Suppose there exist constants $r_1,...,r_5>0$ possibly depending on $z$ such that the following series of estimates are satisfied:
\begin{align}
\label{lemma_section_4_2_1}
\norm{R_{H_0}(z)f}_{\Y}&\le r_1\norm{f}_{\X},\\
\label{lemma_section_4_2_2}
\norm{W_2R_{H_0}(z)f}_{L^2}&\le r_2\norm{f}_{\X},\\
\label{lemma_section_4_2_3}
\norm{W_1R_{H_0}(z)f}_{L^2}&\le r_3\norm{f}_{\X},\\
\label{lemma_section_4_2_4}
\norm{R_{H_0}(z)W_1f}_{\Y}&\le r_4\norm{f}_{L^2},\\
\label{lemma_section_4_2_5}
\norm{W_2R_{H}(z)W_2f}_{L^2}&\le r_5\norm{f}_{L^2}.
\end{align}
Then the following resolvent estimate for $H$ holds:
\begin{align}
\label{lemma_section_4_2_6}
\norm{R_{H}(z)f}_{\Y}\le (r_1+r_2r_4+r_3r_4r_5)\norm{f}_{\X} .
\end{align}
\end{lemma}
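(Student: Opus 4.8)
The plan is to obtain \eqref{lemma_section_4_2_6} from the second resolvent equation applied twice, in the same spirit as the perturbation argument behind Theorem \ref{theorem_3}. Since $W_1$ and $W_2$ are multiplication operators they commute, so $V=W_1W_2=W_2W_1$, and the two forms of the resolvent identity
$$
R_H(z)=R_{H_0}(z)-R_{H_0}(z)VR_H(z),\qquad R_H(z)=R_{H_0}(z)-R_H(z)VR_{H_0}(z)
$$
factor as
\begin{align*}
R_H(z)&=R_{H_0}(z)-\big(R_{H_0}(z)W_1\big)\big(W_2R_H(z)\big),\\
W_2R_H(z)&=W_2R_{H_0}(z)-\big(W_2R_H(z)W_2\big)\big(W_1R_{H_0}(z)\big),
\end{align*}
where the second line is obtained by multiplying the second resolvent equation on the left by $W_2$ and using $V=W_2W_1$. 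First I would insert the second identity into the first.

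For the quantitative bound I would argue as follows. Applying \eqref{lemma_section_4_2_2}, \eqref{lemma_section_4_2_3} and \eqref{lemma_section_4_2_5} to the second identity gives
$$
\|W_2R_H(z)f\|_{L^2}\le \|W_2R_{H_0}(z)f\|_{L^2}+r_5\|W_1R_{H_0}(z)f\|_{L^2}\le(r_2+r_3r_5)\|f\|_\X .
$$
Feeding this estimate, together with \eqref{lemma_section_4_2_1} and \eqref{lemma_section_4_2_4}, into the first identity yields
$$
\|R_H(z)f\|_\Y\le \|R_{H_0}(z)f\|_\Y+r_4\|W_2R_H(z)f\|_{L^2}\le(r_1+r_2r_4+r_3r_4r_5)\|f\|_\X ,
$$
which is precisely \eqref{lemma_section_4_2_6}.

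The only genuinely delicate point — the step I expect to be the main obstacle — is justifying that all the operator products above are well defined and that the factored resolvent identities hold rigorously, since $V$ is merely $H_0$-form bounded rather than operator bounded. I would handle this by first restricting to $f\in\X\cap L^2$, which is dense in $\X$ by hypothesis: for such $f$ the vectors $R_{H_0}(z)f$ and $R_H(z)f$ lie in the common form domain $\H^\sigma$, and multiplication by $W_1,W_2\in L^{n/\sigma,\infty}$ maps $\H^\sigma$ boundedly into $L^2$ by O'Neil's inequality \eqref{Holder} and Sobolev's inequality \eqref{Sobolev}, so each composite is a bounded operator and the two identities hold as genuine operator identities on $L^2$ (compare \cite[Section~4]{BoMi}). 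Once \eqref{lemma_section_4_2_6} is established on $\X\cap L^2$, it extends to all of $\X$ by density, using completeness of $\Y$ and continuity of the right-hand side in the $\X$-norm. I do not anticipate any further difficulty.
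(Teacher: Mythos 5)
Your proof is correct and follows essentially the same route as the paper's: both iterate the second resolvent identity, first estimating $\norm{W_2 R_H(z) f}_{L^2}$ via \eqref{lemma_section_4_2_2}, \eqref{lemma_section_4_2_3}, \eqref{lemma_section_4_2_5}, and then feeding that bound together with \eqref{lemma_section_4_2_1} and \eqref{lemma_section_4_2_4} into the first identity. Your extra remarks on justifying the operator compositions on $\X\cap L^2$ and extending by density mirror the rigor the paper defers to \cite[Section 4]{BoMi}.
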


%proof
\begin{proof}
A more general version of the lemma with its complete proof can be found in \cite[Proposition 4.1]{BoMi}. Hence  only a brief sketch of the proof is given here. The proof is based on the following resolvent formulas (see \cite[Section 4]{BoMi}):
\begin{align}
\label{lemma_section_4_2_proof_1}
R_H(z)
=R_{H_0}(z)-R_{H_0}(z)VR_H(z)
%\label{lemma_section_4_2_proof_2}
=R_{H_0}(z)-R_{H}(z)VR_{H_0}(z).
\end{align}
Since the desired estimate for $R_{H_0}$ is assumed in \eqref{lemma_section_4_2_1}, it suffices to deal with the term $R_{H_0}(z)VR_H(z)$. The estimate \eqref{lemma_section_4_2_4} implies
\begin{align}
\label{lemma_section_4_2_proof_3}
\norm{R_{H_0}(z)VR_H(z)f}_\Y\le r_4\norm{W_2R_H(z)f}_{L^2}
\end{align}
Using \eqref{lemma_section_4_2_proof_1}, \eqref{lemma_section_4_2_2}, \eqref{lemma_section_4_2_5} and \eqref{lemma_section_4_2_3}, we obtain
\begin{align}
\nonumber
\norm{W_2R_H(z)f}_{L^2}
&\le \norm{W_2R_{H_0}(z)f}_{L^2}+\norm{W_2R_H(z)W_2}\norm{W_1R_{H_0}(z)f}_{L^2}\\
\label{lemma_section_4_2_proof_4}
&\le (r_2+r_3r_5)\norm{f}_{\X}.
\end{align}
Then \eqref{lemma_section_4_2_6} follows from \eqref{lemma_section_4_2_1}, \eqref{lemma_section_4_2_proof_3} and \eqref{lemma_section_4_2_proof_4}.
\end{proof}
\vskip0.3cm
%proof
\begin{proof}[\underline{Proof of Theorem \ref{theorem_5} and Remark \ref{remark_theorem_5}}]
Let $\Omega_0:=\Omega\setminus\{B_1\}$. For any $(1/p,1/q)\in\Omega_0$ there exists an open line segment $I\subset \Omega_0$ containing $(1/p,1/q)$, which is not parallel to either the vertical or the horizontal lines (see Figure 1). Hence the real interpolation (see Appendix \ref{appendix_misc}) allows us to replace $L^p$ and $L^q$ in \eqref{lemma_section_4_1_1} by $L^{p,2}$ and $L^{q,2}$ if $(1/p,1/q)\in \Omega_0$.

\vskip0.3cm
{\it Step 1}. Let $2n/(n+2\sigma)\le p_0\le 2(n+1)/(n+3)$. To prove \eqref{theorem_5_1}, it suffices to show
\begin{align}
\label{theorem_5_proof_1}
\norm{(H-z)^{-1}f}_{L^{p_0'}}\lesssim |z|^{\frac{n}{\sigma p_0}-\frac{n+2\sigma}{2\sigma}}\norm{f}_{L^{p_0}},\quad f\in C_0^\infty(\R^n),\ z\in \C\setminus[0,\infty).
\end{align}
Indeed, combined with Corollary \ref{corollary_1}, \eqref{theorem_5_proof_1} also implies the same uniform estimate for $(H-z\mp i0)^{-1}$ with $z>0$ under the condition \eqref{corollary_1_1}.  Then the density argument yields the desired result \eqref{theorem_5_1} for all $f\in L^{p_0}$.
\vskip0.2cm
Recall that $2^*(\sigma)=2n/(n-2\sigma)$ and $2_*(\sigma)=2n/(n+2\sigma)$. Let $W_1:=|x|^\sigma V,\ W_2:=|x|^{-\sigma}\in L^{n/\sigma,\infty}$. Since $1/2={1}/{2^*(\sigma)}+\sigma/n$ and ${1}/{2_*(\sigma)}=\sigma/n+1/2$, one has
\begin{align}
\label{theorem_5_proof_2}
\norm{W_jf}_{L^2}\lesssim \norm{f}_{L^{2^*(\sigma),2}},\quad \norm{W_jf}_{L^{2_*(\sigma),2}}\lesssim \norm{f}_{L^2}
\end{align}
for $j=1,2$. Note that $(1/p_0,1/2^*(\sigma)),(1/2_*(\sigma),p_0')\in \Omega_0$ (see Figure 1).
Applying Lemma \ref{lemma_section_4_1} with $(p,q)=(p_0,2^*(\sigma))$ or with $(p,q)=(2_*(\sigma),p_0')$ and \eqref{theorem_5_proof_2}, we then have
\begin{align}
\label{theorem_5_proof_3}
\norm{W_jR_{H_0}(z)f}_{L^2}&\lesssim |z|^{\frac{n}{2\sigma}(1/p_0-1/{2^*(\sigma)})-1}\norm{f}_{L^{p_0,2}},\ j=1,2;
\end{align}
\begin{align}
\label{theorem_5_proof_4}
\norm{R_{H_0}(z)W_1f}_{L^{p_0',2}}&\lesssim |z|^{\frac{n}{2\sigma}(1/{2_*(\sigma)}-1/p_0')-1}\norm{f}_{L^2}.
\end{align}
Moreover, the Kato--Yajima estimate \eqref{theorem_1_2} implies
\begin{align}
\label{theorem_5_proof_5}
\norm{W_2R_H(z)W_2f}_{L^2}\lesssim \norm{f}_{L^2}.
\end{align}
By virtue of \eqref{lemma_section_4_1_1}, \eqref{theorem_5_proof_3}--\eqref{theorem_5_proof_5}, one can apply Lemma \ref{lemma_section_4_2} with the choice of $\X=L^{p_0}$ and $\Y=L^{p_0'}$, to obtain
 the desired bound for $R_H(z)$ since
$$
\frac{n}{2\sigma}\Big(\frac{1}{p_0}-\frac{1}{2^*(\sigma)}\Big)-1+\frac{n}{2\sigma}\Big(\frac{1}{2_*(\sigma)}-\frac{1}{p_0'}\Big)-1%=\frac{n}{\sigma}\Big(\frac{1}{p_0}-\frac{1}{2^*(\sigma)}\Big)-2
=\frac{n}{\sigma p_0}-\frac{n+2\sigma}{2\sigma},
$$
$L^{p_0}\hookrightarrow L^{p_0,2}$ and $L^{p_0',2}\hookrightarrow L^{p_0'}$ (see Appendix \ref{appendix_misc}). This completes the proof of \eqref{theorem_5_proof_1}.
\vskip0.3cm
{\it Step 2}. We next show \eqref{theorem_5_2}. Let $\Im z\ge0$, $\ep>0$, $u\in C_0^\infty(\R^n\setminus\{0\})$ and $f=(H-z)u$. Since $H_0u,Vu\in L^2\cap L^{{2_*(\sigma)}}$ by \eqref{Holder} and $|x|^\sigma V\in L^{n/\sigma,\infty}$, we have $f\in L^2\cap L^{{2_*(\sigma)}}$ and$$(H-z-i\ep)^{-1}f=u+i\ep (H-z-i\ep)^{-1}u$$ in $L^2(\R^n)$. Since $\Im (z+i\ep)\ge \ep>0$, we know by \eqref{theorem_5_1} with $p=2_*(\sigma)$ that
$$
\norm{u}_{L^{2^*(\sigma)}}\lesssim \norm{f}_{L^{2_*(\sigma)}}+\ep\norm{u}_{L^{2_*(\sigma)}}
$$
uniformly in $z$ and $\ep>0$, which implies \eqref{theorem_5_2} for $\Im z\ge0$ by letting $\ep\searrow0$. By taking the complex conjugate, we also obtain \eqref{theorem_5_2} for $\Im z\le0$.

\vskip0.3cm
{\it Step 3}. We finally show \eqref{theorem_5_3} for $\sigma>1$. The proof is different from the above argument, based on a simple trick due to T. Duyckaerts \cite{Duyckaerts} as follows. By the double endpoint Strichartz estimate \eqref{theorem_3_2} with $(p,q)=(\tilde p,\tilde q)=(2,2^*)$, we obtain
$$
\norm{|D|^{\sigma-1}\psi}_{L^2([-T,T];L^{\frac{2n}{n-2}}_x)}\lesssim \norm{\psi_0}_{L^2_x}+\norm{|D|^{1-\sigma}F}_{L^2([-T,T];L^{\frac{2n}{n+2}}_x)},
$$
uniformly in $T$. Plugging $\psi=e^{-izt}u$, which solves \eqref{Duhamel} with $F=e^{-izt}(H-z)u$, implies
$$
\rho_z(T)\norm{|D|^{\sigma-1}u}_{L^{\frac{2n}{n-2}}}\lesssim \norm{u}_{L^2}+\rho_z(T)\norm{|D|^{1-\sigma}(H-z)u}_{L^{\frac{2n}{n+2}}}
$$
uniformly in $z\in \C$ and $T>0$, where $\rho_z(T):=\norm{e^{-izt}}_{L^2_T}\ge T^{1/2}$ since $|e^{\Im zt}|\ge1$ on either $[0,T]$ or $[-T,0]$. Hence, dividing by $\rho_z(T)$ and letting $T\to \infty$ yield \eqref{theorem_5_3}.
\end{proof}
\vskip0.3cm
%remark
\begin{remark}\label{remark_uniform_Sobolev}Although only the case $q=p'$ was considered, one can also show by the same argument that $R_H(z)$ satisfies the same estimates as \eqref{lemma_section_4_1_1} for $(1/p,1/q)$ belonging to the closed square (with its inside) having the line segment $\overline{A_1B_1}$ as a diagonal line (see Figure 1). However, it is far from the expected optimal range (see Remark \ref{remark_section_4_1_1}).\end{remark}
\vskip0.3cm
\section{Generalization to some dispersive operators}
\label{section_5}
This section discusses a generalization of the above results to a class of dispersive operators. We provide two types of examples: inhomogeneous elliptic operators and Schr\"odinger operators with variable coefficients.
\subsection{Inhomogeneous elliptic operator}
\label{subsection_5_1}
Let $0<\sigma<n/2$ and $P_0\in C(\R^n)\cap C^2(\R^n\setminus\{0\})$ be a non-negative symbol of order $2\sigma$. Suppose there exist $C_{1,\ell},C_{2,\ell},C>0$ such that
\begin{align}
\label{P_0_1}
C_{1,\ell}|\xi|^{2\sigma}\le  (\xi\cdot\nabla)^\ell P_0(\xi)\le C_{2,\ell}\<\xi\>^{2\sigma},\quad
|(\xi\cdot\nabla)^{2} P_0(\xi)|\le C(\xi\cdot\nabla)P_0(\xi)
\end{align}
for all $\xi\in \R^n$ and $\ell=0,1$. The following two examples are of particular interest:
\begin{itemize}
\item \underline{{\it Massive fractional Laplacian}}:
$$P_0(\xi)=(|\xi|^2+m)^\sigma,\ \ m>0. $$

\item \underline{{\it Sum of fractional Laplacians of different orders}}: $$P_0(\xi)=\sum_{j=1}^Ja_j|\xi|^{2\sigma_j}$$ where $J\in \N$, $0<\sigma_1<\sigma_2<...<\sigma_J=\sigma<n/2$, $a_j>0$.
\end{itemize}
\vskip0.3cm
%The latter example particularly includes the fourth-order operator $\Delta^2-\Delta$.
Let $P_\ell(\xi)=(\xi\cdot\nabla)^\ell P_0(\xi)$ and $H_\ell=P_\ell(D)$ for $l=0,1,2$. Under the condition \eqref{P_0_1}, $H_0$ is self-adjoint on $L^2(\R^n)$ with domain $\H^{2\sigma}$, satisfying
\begin{align}
\label{P_0_3}
\<H_0u,u\>\gtrsim\<(-\Delta)^\sigma u,u\>,\ \<H_1u,u\>\gtrsim \<(-\Delta)^\sigma u,u\>,\ |\<H_2u,u\>|\lesssim \<H_1u,u\>
\end{align}
for $u\in C_0^\infty(\R^n)$. Under Assumption \ref{assumption_A} associated with these $H_\ell$ and $V_\ell=(-x\cdot\nabla_x)^\ell V$ for $\ell=1,2$, $H=H_0+V$ thus can be defined as the Friedrichs extension of the quadratic form $Q_H(u,v)=\<H_0u,v\>+\<Vu,v\>$ on $C_0^\infty(\R^n)$ such that $D(H^{1/2})=\H^{\sigma}(\R^n)$.

%theorem
\begin{theorem}	
\label{theorem_section_5_1}
Let $n\ge2$, $0<\sigma<n/2$, $H_0=P_0(D)$ satisfy \eqref{P_0_1}, $V$ satisfy Assumption \ref{assumption_A} (associated with this $H_0$) and $H=H_0+V$. Then, for any $\sigma-n/2< \gamma<\sigma-1/2$, $H$ satisfies the same estimate as \eqref{theorem_1_1}, namely the following resolvent estimate holds:
\begin{align}
\label{theorem_section_5_1_1}
\sup_{z\in\C\setminus[0,\infty)}\norm{|x|^{-\sigma+\gamma}|D|^{\gamma}(H-z)^{-1}|D|^{\gamma}|x|^{-\sigma+\gamma}}_{L^2\to L^2}<\infty.
\end{align}
Moreover, the solution $\psi$ to \eqref{Cauchy_2} given by \eqref{Duhamel} (associated with this $H$) satisfies
\begin{align}
\label{theorem_section_5_1_2}
\norm{|x|^{-\sigma+\gamma}|D|^{\gamma}\psi}_{L^2_tL^2_x}\lesssim \norm{\psi_0}_{L^2_x}+\norm{|x|^{\sigma-\gamma}|D|^{-\gamma}F}_{L^2_tL^2_x}.
\end{align}
\end{theorem}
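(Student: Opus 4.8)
The plan is to obtain \eqref{theorem_section_5_1_1} and \eqref{theorem_section_5_1_2} by rerunning the argument of Section \ref{section_2} essentially verbatim, the only change being that the explicit commutator identities $[(-\Delta)^\sigma,iA]=2\sigma(-\Delta)^\sigma$ and $[[(-\Delta)^\sigma,iA],iA]=(2\sigma)^2(-\Delta)^\sigma$ are replaced by the qualitative bounds \eqref{P_0_1} and \eqref{P_0_3}. Concretely, the heart of the matter is a $P_0(D)$-analogue of Theorem \ref{theorem_section_2_1}, i.e. the bound \eqref{theorem_section_2_1_1} with $H=P_0(D)+V$. Granting this, the reduction to the weighted form \eqref{theorem_section_5_1_1} --- splitting $e^{-\delta|D|}|D|^\gamma|x|^{-\sigma+\gamma}=e^{-\delta|D|}|D|^\sigma\<A\>^{-s}\cdot\<A\>^s|D|^{-s}|x|^{-s}$ with $s=\sigma-\gamma$ when $\sigma-1<\gamma<\sigma-1/2$, and the analogous $\<A\>$-splitting when $\sigma-n/2<\gamma\le\sigma-1$, and controlling the correction factor by Hardy's inequality \eqref{Hardy} and Stein's interpolation theorem --- is literally identical to the proof of Theorem \ref{theorem_1}, since that argument involves only $|D|$, $|x|$ and $A$ and never refers to $H_0$. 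Likewise \eqref{theorem_section_5_1_2} follows from \eqref{theorem_section_5_1_1} and the abstract smooth-perturbation Lemma \ref{lemma_section_2_6} applied with $G=|x|^{-\sigma+\gamma}|D|^\gamma$, exactly as Theorem \ref{theorem_2} was deduced from Theorem \ref{theorem_1}.

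For the $P_0(D)$-analogue of Theorem \ref{theorem_section_2_1}, I would first record via Lemma \ref{lemma_section_2_1} that $[P_0(D),iA]=P_1(D)$ and $[[P_0(D),iA],iA]=P_2(D)$ as forms on $C_0^\infty(\R^n)$, where $P_\ell(\xi)=(\xi\cdot\nabla_\xi)^\ell P_0(\xi)$ (the hypotheses of Lemma \ref{lemma_section_2_1} are met since \eqref{P_0_1} gives $P_\ell\in L^\infty_{\loc}\subset L^1_{\loc}$), so that the first and second commutators of $H=P_0(D)+V$ with $iA$ are $S_1=P_1(D)+V_1$ and $S_2=P_2(D)+V_2$. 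The inputs that the Mourre differential-inequality scheme of Section \ref{section_2} actually uses are: (i) $Q_H(u,u)\gtrsim\||D|^\sigma u\|^2$ together with boundedness of $Q_H,Q_{S_1},Q_{S_2}$ on $\H^\sigma\times\H^\sigma$ and the mapping property $\tilde S_1,\tilde S_2\in\mathbb B(\H^\sigma,\H^{-\sigma})$; (ii) $S_1\ge0$ with $\||D|^\sigma u\|\lesssim\|S_1^{1/2}u\|$; and (iii) $|\<S_2u,u\>|\lesssim\<S_1u,u\>$. All three follow from \eqref{P_0_1}, \eqref{P_0_3} and Assumption \ref{assumption_A} (now associated with $H_\ell=P_\ell(D)$) in precisely the way \eqref{assumption_A_4} and the estimates around \eqref{lemma_section_2_2_1} were derived for $(-\Delta)^\sigma$; in particular the upper bounds $P_\ell(\xi)\lesssim\<\xi\>^{2\sigma}$ in \eqref{P_0_1} are exactly what gives $\tilde S_1,\tilde S_2\in\mathbb B(\H^\sigma,\H^{-\sigma})$, needed for the Lax--Milgram step leading to \eqref{coercivity} and for the differentiability of $\ep\mapsto G_\ep(z)$. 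Crucially, the regulariser $\eta_\delta=|D|^\sigma e^{-\delta|D|}$ and the identity $[\eta_\delta,iA]=\sigma\eta_\delta-\delta|D|\eta_\delta$ involve only $|D|$, so the whole computation of $\partial_\ep F_\ep$, the bounds for $I_1,\dots,I_4$, and the resulting differential inequality $\norm{\partial_\ep F_\ep}\lesssim\norm{F_\ep}+\ep^{s-3/2}\norm{F_\ep}^{1/2}$ are unchanged; integrating it together with the bound at $\ep=1$ and using $s>1/2$ yields the claim.

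I do not expect a genuine obstacle: the whole point of the Mourre-type proof in Section \ref{section_2}, as opposed to the multiplier or spherical-harmonics methods available only for $\sigma=1$, is that it feeds solely on the qualitative commutator hierarchy \eqref{assumption_A_1}--\eqref{assumption_A_3} (here promoted to \eqref{P_0_1}--\eqref{P_0_3}) and never on an explicit algebra for $(-\Delta)^\sigma$, which is exactly why it transfers. The only points needing a routine second look are: that $P_1(D)$ has form domain $\H^\sigma$ and is comparable to $(-\Delta)^\sigma$ from below and to $\<D\>^{2\sigma}$ from above (immediate from \eqref{P_0_1} with $\ell=1$), that $|P_2(\xi)|\lesssim P_1(\xi)$ gives $|\<S_2u,u\>|\lesssim\<S_1u,u\>$ after absorbing the $V_2,V_1$ terms by \eqref{assumption_A_3}, and that the $H_0$-form-boundedness of $(x\cdot\nabla)^\ell V$ postulated in Assumption \ref{assumption_A} is, by \eqref{P_0_1} and \eqref{P_0_3}, the same condition as $(-\Delta)^\sigma$-form-boundedness --- so the admissible class of $V$ is unchanged and Examples \ref{example_1}--\ref{example_2} still apply.
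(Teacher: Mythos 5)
Your proposal is correct and follows exactly the paper's own argument: the paper likewise observes via Lemma \ref{lemma_section_2_1} that $[H_0,iA]=P_1(D)$ and $[[H_0,iA],iA]=P_2(D)$, then invokes \eqref{P_0_3} to rerun the Section \ref{section_2} machinery verbatim to get the $\<A\>^{-s}|D|^\sigma e^{-\delta|D|}$ estimate, passes to \eqref{theorem_section_5_1_1} by the same $|D|,|x|,A$ manipulations, and finally applies Lemma \ref{lemma_section_2_6}. Your fleshing out of which structural inputs the Mourre scheme actually consumes (the form coercivity, the $\H^\sigma\to\H^{-\sigma}$ mapping property of $\tilde S_1,\tilde S_2$ coming from the upper bound in \eqref{P_0_1}, and the fact that $\eta_\delta$ and the Stein interpolation step never touch $H_0$) is a useful elaboration of exactly the points the paper elides when it says ``the completely same argument.''
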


%proof
\begin{proof}
By Lemma \ref{lemma_section_2_1}, we have
$
[H_0,iA]=P_1(D)$ and $[[H_0,iA],iA]=P_2(D)$. Then, taking the inequalities \eqref{P_0_3} into account, one can use the completely same argument as that in Section \ref{section_2} to obtain
$$
\sup_{\delta>0}\sup_{z\in \C\setminus[0,\infty)}\norm{\<A\>^{-s}|D|^\sigma e^{-\delta|D|} (H-z)^{-1}e^{-\delta|D|}|D|^\sigma  \<A\>^{-s}}_{L^2\to L^2}<\infty
$$
and hence \eqref{theorem_section_5_1_1}. By Lemma \ref{lemma_section_2_6}, \eqref{theorem_section_5_1_1} implies \eqref{theorem_section_5_1_2}. This completes the proof.
\end{proof}
\vskip0.3cm
Note that one can also extend Corollary \ref{corollary_1} to the operator $H=P_0(D)+V$ by the same argument as in the last part of Section \ref{section_2}.
\vskip0.2cm
About Strichartz and uniform Sobolev estimates, we have seen, in Lemmas \ref{lemma_section_3_3} and \ref{lemma_section_4_2}, a general criterion  to deduce them for $H$ from corresponding estimates for $H_0$, the $H_0$-supersmoothness of $W_1$ and the $H$-supersmoothness of $W_2$, where $W_1,W_2\in L^{n/\sigma,\infty}$ satisfy $V=W_1W_2$. To apply this criterion to $H=P_0(D)+V$, one requires corresponding Strichartz or uniform Sobolev estimates for $P_0(D)$. Although these estimates have been extensively studied under various conditions on $P_0(\xi)$, we only focus for the sake of simplicity on the following higher-order inhomogeneous elliptic operator:
\begin{align}
\label{P_0_2}
P_0(D)=\sum_{j=1}^\sigma a_j(-\Delta)^{j},
\end{align}
where $\sigma\in \N$, $a_\sigma=1$ and $a_j\ge0$ for $1\le j\le \sigma-1$. For readers interested in these topics, we refer to \cite{KPV}, \cite{HHZ}, \cite{Guo}, \cite{CMY} and \cite{GLNY} for Strichartz estimates and \cite{SYY} and references therein for uniform Sobolev estimates. Note that, in the case with \eqref{P_0_2}, $V(x)=a|x|^{-2\sigma}$ fulfills the conditions in the following Theorem \ref{theorem_section_5_2} if $a>-C_{\sigma,n}$ since $P_\ell(D)\ge (2\sigma)^{2\ell}(-\Delta)^\sigma$.

%theorem
\begin{theorem}	
\label{theorem_section_5_2}
Let $n\ge3$, $\sigma\in \N$, $\sigma<n/2$, $H_0=P_0(D)$ be given by \eqref{P_0_2}, $V$ satisfy Assumption \ref{assumption_A} associated with this $H_0$ and $|x|^\sigma V\in L^{n/\sigma,\infty}(\R^n)$. Then the following statements hold:
\begin{itemize}
\vskip0.3cm
\item \underline{Strichartz estimates}: for any two ${n}/{2}$-admissible pairs $(p,q)$ and $(\tilde p,\tilde q)$, one has \begin{align}
\label{theorem_section_5_2_1}
\norm{|D|^{{2(\sigma-1)}/{p}}\psi}_{L^p_tL^{q}_x}\lesssim \norm{\psi_0}_{L^2_x}+\norm{|D|^{{2(1-\sigma)}/\tilde p}F}_{L^{\tilde p'}_tL^{\tilde q'}_x}
\end{align}
where $\psi$ is given by \eqref{Duhamel} associated with the present $H$.
\vskip0.3cm
\item \underline{Uniform Sobolev estimates}: for any $2n/(n+2\sigma)\le p\le 2(n+1)/(n+3)$, one has
\begin{align*}
%\label{theorem_section_5_2_2}
\norm{(H-z)^{-1}f}_{L^{p'}}\lesssim |z|^{\frac{n}{\sigma p}-\frac{n+2\sigma}{2\sigma}}\norm{f}_{L^{p}},\quad z\in \C\setminus \{0\},
\end{align*}
\end{itemize}
\end{theorem}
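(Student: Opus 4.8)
The plan is to follow, almost verbatim, the perturbative arguments used for Theorems~\ref{theorem_3} and~\ref{theorem_5}, replacing the homogeneous operator $(-\Delta)^\sigma$ by $H_0=P_0(D)$ and letting Theorem~\ref{theorem_section_5_1} play the role that Theorem~\ref{theorem_1} plays there. Since $\sigma\in\N$ forces $\sigma\ge1>1/2$, Theorem~\ref{theorem_section_5_1} is available with $\gamma=0$; applying it to $H$, and also (taking $V\equiv0$, which trivially satisfies Assumption~\ref{assumption_A} in view of~\eqref{P_0_3}) to $H_0$, yields $\sup_{z\in\C\setminus[0,\infty)}\norm{|x|^{-\sigma}(H-z)^{-1}|x|^{-\sigma}}<\infty$ together with the same bound with $H$ replaced by $H_0$ --- these are the analogues of~\eqref{theorem_1_2}. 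By Lemma~\ref{lemma_section_2_6} with $G=|x|^{-\sigma}$, this is equivalent to the $H_0$- and $H$-supersmoothness of $|x|^{-\sigma}$, i.e.\ to the analogues of~\eqref{theorem_2_2} for both operators. This provides all of the spectral-theoretic input; the remaining ingredients are the corresponding free dispersive estimates for $P_0(D)$ and the abstract perturbation lemmas.

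For the Strichartz estimate~\eqref{theorem_section_5_2_1} I would first record the analogue of Lemma~\ref{lemma_section_3_1}: Strichartz estimates with a gain of regularities for $e^{-itP_0(D)}$ and its Duhamel operator. Because the leading term of $P_0$ in~\eqref{P_0_2} is $(-\Delta)^\sigma$ while the lower-order terms dominate at low frequency, a Littlewood--Paley decomposition reduces matters, dyadic piece by dyadic piece, to homogeneous operators of the appropriate order, and these estimates are available in the cited literature (\cite{KPV}, \cite{HHZ}, \cite{Guo}, \cite{CMY}, \cite{GLNY}). Then, with $W_1=|x|^\sigma V\in L^{n/\sigma,\infty}$, $W_2=|x|^{-\sigma}\in L^{n/\sigma,\infty}$ and the Banach spaces $\X_2,\Y_p$ defined by $\norm{f}_{\X_2}=\norm{|D|^{1-\sigma}f}_{L^{2_*,2}_x}$ and $\norm{f}_{\Y_p}=\norm{|D|^{2(\sigma-1)/p}f}_{L^{q,2}_x}$, I would verify the hypotheses~\eqref{lemma_section_3_3_1}--\eqref{lemma_section_3_3_8} of Lemma~\ref{lemma_section_3_3} exactly as in the proof of Theorem~\ref{theorem_3}: the free-side bounds follow from the free Strichartz estimate above together with O'Neil's inequality~\eqref{Holder} and Sobolev's inequality~\eqref{Sobolev_2}, while the two $H$-side bounds are the $H$-supersmoothness of $|x|^{-\sigma}$. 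Lemma~\ref{lemma_section_3_3} then gives~\eqref{theorem_section_5_2_1} in the double endpoint case $p=\tilde p=2$ and in the homogeneous non-endpoint cases, and Christ--Kiselev's lemma (Appendix~\ref{appendix_misc}) upgrades these to all $n/2$-admissible pairs and to the inhomogeneous term.

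For the uniform Sobolev estimate the argument mirrors Step~1 of the proof of Theorem~\ref{theorem_5}. I would first invoke the uniform Sobolev estimates for the free resolvent $(P_0(D)-z)^{-1}$ with the stated weight $|z|^{n/(\sigma p)-(n+2\sigma)/(2\sigma)}$ --- the analogue of Lemma~\ref{lemma_section_4_1} --- as established for this class of constant-coefficient elliptic operators in \cite{SYY}. Combining these with the bound $\sup_z\norm{|x|^{-\sigma}(H-z)^{-1}|x|^{-\sigma}}<\infty$ obtained above and with the mapping properties of $W_1,W_2$ furnished by~\eqref{Holder}, I would apply Lemma~\ref{lemma_section_4_2} with $\X=L^p$, $\Y=L^{p'}$, $W_1=|x|^\sigma V$, $W_2=|x|^{-\sigma}$, and then check that the exponents of $|z|$ add up to $n/(\sigma p)-(n+2\sigma)/(2\sigma)$ as in the displayed computation there; the passage to boundary values $z>0$ is handled, under the relevant limiting absorption principle, exactly as in the last part of the proof of Theorem~\ref{theorem_5} (using the extension of Corollary~\ref{corollary_1} to $P_0(D)+V$ already noted after Theorem~\ref{theorem_section_5_1}).

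The one point that is not routine bookkeeping, and the place where the non-homogeneity of $P_0$ genuinely enters, is the free input for $P_0(D)$: the Strichartz estimate with a gain of regularities and the uniform Sobolev estimate with the specific power of $|z|$. Unlike for $(-\Delta)^\sigma$, one cannot appeal to exact scaling; one must instead check that the level sets $\{P_0(\xi)=\lambda\}$ have curvature bounds uniform in $\lambda>0$ --- for $\lambda$ large the leading term $|\xi|^{2\sigma}$ dominates and the level set is a perturbed sphere, while for $\lambda$ small the lowest nonvanishing term controls --- and run a frequency-localized argument that matches the homogeneous model at each dyadic scale. This is precisely the analysis carried out in \cite{SYY} for the uniform Sobolev estimates and in \cite{Guo}, \cite{CMY}, \cite{GLNY} for the Strichartz estimates with a gain, so once those estimates are quoted, the proof reduces to the perturbative steps described above.
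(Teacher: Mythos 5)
Your proposal is correct and follows essentially the same route as the paper's proof: both take $W_1=|x|^\sigma V$, $W_2=|x|^{-\sigma}$, extract the needed $H_0$- and $H$-supersmoothness from Theorem~\ref{theorem_section_5_1} (with $\gamma=0$, available since $\sigma\ge1$) together with the free endpoint Strichartz estimate, invoke Appendix~\ref{appendix_A} (resp.\ \cite{SYY}, \cite{HuZh}) for the free Strichartz (resp.\ uniform Sobolev) estimates for $P_0(D)$, and then apply Lemmas~\ref{lemma_section_3_3} and~\ref{lemma_section_4_2} with Christ--Kiselev. You spell out more of the verification of the hypotheses of Lemma~\ref{lemma_section_3_3}, and you flag correctly that the limiting absorption step for $z>0$ requires the extension of Corollary~\ref{corollary_1} to $P_0(D)+V$ noted after Theorem~\ref{theorem_section_5_1}; the only slight imprecision is your description of Lemma~\ref{lemma_A_1}'s proof as ``reducing to homogeneous operators dyadic piece by dyadic piece'' --- the argument actually proves a single localized dispersive bound via the kernel estimate of \cite{HHZ} rather than literally replacing $P_0$ by a homogeneous symbol on each annulus, but this does not affect the correctness of the conclusion.
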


%proof
\begin{proof}[Proof of Theorems \ref{theorem_section_5_2}]
Let $W_1=|x|^{\sigma}V$ and $W_2=|x|^{-\sigma}$. Under the above conditions, the free evolution $e^{-itH_0}$ satisfies the same Strichartz estimates as \eqref{theorem_section_5_2_1} (see Appendix \ref{appendix_A} below). Moreover, we know by Theorem \ref{theorem_section_5_1} and the double endpoint Strichartz estimates for $e^{-itH_0}$ that $W_1$ is $H_0$-supersmooth and $W_2$ is $H$-supersmooth. Finally, the same uniform Sobolev estimates for $H_0=P_0(D)$ as in Lemma \ref{lemma_section_4_1} have been proved by \cite{SYY} and \cite{HuZh}. Therefore, Lemma \ref{lemma_section_3_3} and \ref{lemma_section_4_2} yield the desired results.
\end{proof}

%subsection
\subsection{Schr\"odinger operator with variable coefficients}
Next, consider the following second order elliptic operator with variable coefficients in divergence form on $\R^n$:
\begin{align}
\label{divergence_form}
H_0:=-\nabla \cdot G_0(x)\nabla=-\sum_{j,k=1}^n\partial_{x_j}g^{jk}(x)\partial_{x_k},
\end{align}
where $G_0(x)=(g^{jk}(x))_{j,k=1}^n$, $g^{jk}\in C^2(\R^n;\R)$ and $G_0(x)$ is symmetric and uniformly elliptic (see the condition \eqref{theorem_section_5_3_1} below). Then $H_0$ is self-adjoint on $L^2(\R^n)$ with domain $\H^2$. Let $G_\ell(x)=[2-(x\cdot\nabla)]^\ell G_0(x)$. Then a direct computation yields that
$$
[H_0,iA]=-\nabla \cdot G_1(x)\nabla,\quad [[H_0,iA],iA]=-\nabla \cdot G_2(x)\nabla
$$
on $C_0^\infty(\R^n)$ and hence these commutators are still in divergence form. Therefore, the completely same argument as in Section \ref{section_2}  yields the following result.

%theorem
\begin{theorem}	
\label{theorem_section_5_3}
Let $n\ge3$ and $H_0$ be given by \eqref{divergence_form}. Assume in addition to the above conditions on $G_0$ that $\partial_x^\alpha g^{jk}\in L^\infty(\R^n)$ for $|\alpha|\le 2$ and $G_0,G_1$ are uniformly elliptic in the sense that there exist $C_{1,\ell},C_{2,\ell}>0$ such that for all $x,\xi\in \R^n$,
\begin{align}
\label{theorem_section_5_3_1}
C_{1,\ell}|\xi|^2\le G_\ell(x)\xi\cdot\xi\le C_{2,\ell}|\xi|^2,\quad \ell=0,1.
\end{align}
Suppose also $V$ satisfies Assumption \ref{assumption_A} associated with $H_\ell=-\nabla \cdot G_\ell(x)\nabla$. Then $H=H_0+V$ satisfies the same estimates as \eqref{theorem_section_5_1_1} and \eqref{theorem_section_5_1_2} with $\sigma=1$ and $1-n/2<\gamma<1/2$.
\end{theorem}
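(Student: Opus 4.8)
The plan is to run the argument of Section~\ref{section_2} essentially verbatim, with the fractional Laplacian $(-\Delta)^\sigma$ (in the case $\sigma=1$) replaced by the divergence-form operator $H_0=-\nabla\cdot G_0(x)\nabla$. The only structural inputs the proof of Theorem~\ref{theorem_section_2_1} really uses are: (a) that the first and second commutators of $H_0$ with $iA$ are again nice operators for which the form inequalities of type \eqref{P_0_3} hold; and (b) the purely Fourier-multiplier identity $[\eta_\delta,iA]=\sigma\eta_\delta-\delta|D|\eta_\delta$, which does not involve $H_0$ at all and holds here with $\sigma=1$. So the first task is to verify (a), and then the rest is a transcription.

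First I would compute the commutators explicitly via the dilation group. Since $e^{-itA}\partial_{x_j}e^{itA}=e^t\partial_{x_j}$ and $e^{-itA}g^{jk}(x)e^{itA}=g^{jk}(e^{-t}x)$ as a multiplication operator, one gets $e^{-itA}(-\partial_{x_j}g^{jk}(x)\partial_{x_k})e^{itA}=-e^{2t}\partial_{x_j}g^{jk}(e^{-t}x)\partial_{x_k}$, and differentiating at $t=0$ yields $[H_0,iA]=-\nabla\cdot G_1(x)\nabla=:H_1$ with $G_1=(2-x\cdot\nabla)G_0$; iterating once more, $[[H_0,iA],iA]=-\nabla\cdot G_2(x)\nabla=:H_2$ with $G_2=(2-x\cdot\nabla)^2G_0$. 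Thus all three operators are symmetric and in divergence form on $C_0^\infty(\R^n)$ (this uses $\partial_x^\alpha g^{jk}\in L^\infty$ for $|\alpha|\le2$ together with the boundedness of the weighted derivatives $(x\cdot\nabla)^\ell g^{jk}$, $\ell\le2$, which is part of the uniform ellipticity hypothesis \eqref{theorem_section_5_3_1} on $G_1$ and its analogue controlling $G_2$). The lower ellipticity of $G_0$ gives $\langle H_0u,u\rangle=\int G_0\nabla u\cdot\overline{\nabla u}\,dx\gtrsim\int|\nabla u|^2dx=\langle-\Delta u,u\rangle$; the lower ellipticity of $G_1$ gives $\langle H_1u,u\rangle\gtrsim\langle-\Delta u,u\rangle$; and a pointwise bound $|G_2(x)\xi\cdot\xi|\lesssim G_1(x)\xi\cdot\xi$ gives $|\langle H_2u,u\rangle|\lesssim\langle H_1u,u\rangle$. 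Combined with Assumption~\ref{assumption_A} for $V$ relative to $H_\ell=-\nabla\cdot G_\ell\nabla$, the inequalities \eqref{assumption_A_1}--\eqref{assumption_A_3} hold, so $H=H_0+V$ is well-defined as the Friedrichs extension of $Q_H$ with form domain $\H^1(\R^n)$, and the self-adjoint operators $S_1,S_2$ and the form inequalities \eqref{lemma_section_2_2_0}--\eqref{lemma_section_2_2_1} are obtained exactly as in Section~\ref{section_2}.

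With (a) in hand, the proof of Theorem~\ref{theorem_section_2_1} goes through word for word: the coercivity estimate \eqref{coercivity}, the identities \eqref{proof_theorem_section_2_1_4}--\eqref{proof_theorem_section_2_1_6}, and Mourre's differential inequality \eqref{proof_theorem_section_2_1_11} invoke only the form bounds just verified and multiplier identities in $D$ and $A$. This yields
\begin{align*}
\sup_{\delta>0}\sup_{z\in\C\setminus[0,\infty)}\norm{\<A\>^{-s}|D|e^{-\delta|D|}(H-z)^{-1}e^{-\delta|D|}|D|\<A\>^{-s}}_{L^2\to L^2}<\infty
\end{align*}
for every $s>1/2$. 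The passage from this to \eqref{theorem_section_5_1_1} is exactly the argument in the proof of Theorem~\ref{theorem_1}: it replaces the weight $\<A\>^{-s}|D|$ by $|x|^{-1+\gamma}|D|^{\gamma}$ for $1-n/2<\gamma<1/2$ using only Hardy's inequality \eqref{Hardy}, Lemma~\ref{lemma_section_2_1}, and Stein's interpolation theorem, none of which sees the coefficients $G_0$. Finally, \eqref{theorem_section_5_1_1} is equivalent, via Kato's smooth perturbation theory (Lemma~\ref{lemma_section_2_6} with $G=|x|^{-1+\gamma}|D|^{\gamma}$), to the Kato smoothing estimate \eqref{theorem_section_5_1_2}.

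The main point requiring care is step~(a): one must check that taking commutators with $A$ does not destroy the divergence-form structure --- it does not, but the resulting coefficients are $G_\ell=(2-x\cdot\nabla)^\ell G_0$, and keeping $G_1$ (and controlling $G_2$ by $G_1$) uniformly elliptic and bounded forces the derivatives of $G_0-\Id$ to decay, i.e. forces $G_0$ to be a genuine long-range perturbation of the identity; this is precisely why \eqref{theorem_section_5_3_1} is imposed directly on $G_1$. Everything after step~(a) is a transcription of Section~\ref{section_2}: in particular no spectral localization or compact-error term appears, since the needed form inequalities hold globally, not merely microlocally.
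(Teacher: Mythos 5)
Your proof is correct and follows essentially the same route as the paper's: the paper's own proof of Theorem~\ref{theorem_section_5_3} is just the observation that the first and second commutators of a divergence-form operator with $iA$ are again in divergence form, $[H_0,iA]=-\nabla\cdot G_1\nabla$ and $[[H_0,iA],iA]=-\nabla\cdot G_2\nabla$ with $G_\ell=(2-x\cdot\nabla)^\ell G_0$, after which the argument of Section~\ref{section_2} applies verbatim (and Lemma~\ref{lemma_section_2_6} converts the resolvent bound into the smoothing bound).

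One small imprecision: you invoke ``the uniform ellipticity hypothesis \eqref{theorem_section_5_3_1} on $G_1$ and its analogue controlling $G_2$,'' but no such analogue for $G_2$ is assumed. It is also not needed: the only role $G_2$ plays in Section~\ref{section_2} is through the form bound $|Q_{S_2}(u,u)|\lesssim Q_{S_1}(u,u)$, which is precisely \eqref{assumption_A_3} of Assumption~\ref{assumption_A}, and the theorem explicitly assumes $V$ satisfies Assumption~\ref{assumption_A} relative to $H_\ell=-\nabla\cdot G_\ell\nabla$. So a separate pointwise bound $|G_2\xi\cdot\xi|\lesssim G_1\xi\cdot\xi$ is not required; the combined estimate on $H_2+V_2$ is already part of the hypothesis. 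Apart from this, the computation of the commutators via the dilation group, the identification of the multiplier commutator $[\eta_\delta,iA]$ as $H_0$-independent, and the transcription of the Mourre differential-inequality argument and the weight-replacement step (Hardy plus Stein interpolation) are all accurate.
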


We next consider Strichartz estimates. It was proved by \cite{MMT} that $e^{-itH_0}$ and $\Gamma_{H_0}$ satisfy Strichartz estimates for all $n/2$-admissible pairs under the following two conditions:
\vskip0.3cm
\begin{itemize}
\item {\it\underline{ Long-range condition}}: there exists $\mu>0$ such that $G_0(x)$ satisfies
$$
|\partial_x^\alpha (g^{jk}(x)-\delta_{jk})|\le C_\alpha \<x\>^{-\mu-|\alpha|},\quad x\in\R^n,\ |\alpha|\le2.
$$
\item {\it \underline{Non-trapping condition}}: the Hamilton flow $(x(t),\xi(t))$ generated by $G_0$ satisfies $|x(t)|\to \infty$ as $t\to \pm\infty$ for any initial data $(x(0),\xi(0))\in \R^n\times (\R^n\setminus\{0\})$.
\end{itemize}
\vskip0.3cm
This fact, Theorem \ref{theorem_section_5_3} and Lemma \ref{lemma_section_3_3} (2) then yield the following result:
%theorem
\begin{theorem}	
\label{theorem_section_5_4}
Assume in addition to the conditions in Theorem \ref{theorem_section_5_3} that $G_0$ satisfies the above long-range and nontrapping conditions and that $|x|^2V\in L^\infty(\R^n)$. Then, for any non-endpoint $n/2$-admissible pairs $(p,q),(\tilde p,\tilde q)$ with $p,\tilde p>2$, $H=H_0+V$ satisfies
$$
\norm{\psi}_{L^p_tL^q_x}\lesssim \norm{\psi_0}_{L^2_x}+\norm{F}_{L^{\tilde p'}_tL^{\tilde q'}_x}
$$
where $\psi$ is given by the Duhamel formula \eqref{Duhamel} associated with this $H$.
\end{theorem}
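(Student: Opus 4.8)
The plan is to run the perturbative machinery of Lemma \ref{lemma_section_3_3}(2), regarding $V$ as a relatively form-bounded perturbation of $H_0=-\nabla\cdot G_0(x)\nabla$, with the Strichartz estimates for the unperturbed variable-coefficient propagator supplied by \cite{MMT} and the Kato smoothing bounds supplied by Theorem \ref{theorem_section_5_3}. First I would fix a non-endpoint $n/2$-admissible pair $(p,q)$ with $p>2$ and, with $\sigma=1$, set
\begin{align*}
W_1:=|x|\,V,\qquad W_2:=|x|^{-1},\qquad \Y:=L^q_x,\qquad \X:=L^{q'}_x.
\end{align*}
Since $|x|^2V\in L^\infty(\R^n)$ we have the pointwise bound $|W_1|\le C|x|^{-1}=CW_2$, so $W_1,W_2\in L^{n,\infty}(\R^n)=L^{n/\sigma,\infty}(\R^n)$, $V=W_1W_2\in L^{n/2,\infty}(\R^n)$, and (using also the uniform ellipticity \eqref{theorem_section_5_3_1} of $G_0,G_1$) the form sum $H=H_0+V$ with form domain $\H^1$ is exactly the operator treated in Theorem \ref{theorem_section_5_3}, whose Assumption \ref{assumption_A} is in force by hypothesis.

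Next I would collect the smoothing ingredients. Applying Theorem \ref{theorem_section_5_3} once with $V\equiv0$ — which trivially satisfies Assumption \ref{assumption_A} by \eqref{theorem_section_5_3_1} — and once with the given $V$, I obtain that $|x|^{-1}$ is both $H_0$- and $H$-supersmooth. By Lemma \ref{lemma_section_2_6} this is equivalent to the homogeneous and retarded Kato estimates; in particular $\|W_2e^{-itH}\psi_0\|_{L^2_tL^2_x}\lesssim\|\psi_0\|_{L^2_x}$, $\|W_2\Gamma_HW_2\|_{L^2_tL^2_x\to L^2_tL^2_x}\lesssim1$, and the analogous bounds with $H$ replaced by $H_0$. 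Because $|W_1|\le CW_2$, the operator $W_1$ inherits $H_0$-supersmoothness, so $\|W_1e^{-itH_0}\psi_0\|_{L^2_tL^2_x}\lesssim\|\psi_0\|_{L^2_x}$. Finally, \cite{MMT} furnishes $\|e^{-itH_0}\psi_0\|_{L^p_tL^q_x}\lesssim\|\psi_0\|_{L^2_x}$ and $\|\Gamma_{H_0}F\|_{L^p_tL^q_x}\lesssim\|F\|_{L^{\tilde p'}_tL^{\tilde q'}_x}$ for all $n/2$-admissible pairs.

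With these in hand, \eqref{lemma_section_3_3_1} (with $p>2$), \eqref{lemma_section_3_3_3} and \eqref{lemma_section_3_3_7} hold for the above $W_1,W_2,\X,\Y$, so Lemma \ref{lemma_section_3_3}(2) yields the homogeneous bound $\|e^{-itH}\psi_0\|_{L^p_tL^q_x}\lesssim\|\psi_0\|_{L^2_x}$. For the inhomogeneous term I would argue as in the proof of Theorem \ref{theorem_4}: expand $\psi$ through the Duhamel identities $U_H=U_{H_0}-i\Gamma_{H_0}W_1W_2U_H$ and $\Gamma_H=\Gamma_{H_0}-i\Gamma_{H_0}W_1W_2\Gamma_H=\Gamma_{H_0}-i\Gamma_HW_2W_1\Gamma_{H_0}$, bound the main term $\Gamma_{H_0}F$ by the free Strichartz estimate, and control each error factor $\|\Gamma_{H_0}W_1G\|_{L^p_tL^q_x}\lesssim\|G\|_{L^2_tL^2_x}$ (for $p>2$) and $\|W_j\Gamma_{H_0}F\|_{L^2_tL^2_x}\lesssim\|F\|_{L^{\tilde p'}_tL^{\tilde q'}_x}$ by Christ--Kiselev's lemma (Appendix \ref{appendix_misc}) followed by the free Strichartz estimates and the duals of the supersmoothness bounds, while $\|W_2\Gamma_HF\|_{L^2_tL^2_x}$ is handled via the second Duhamel identity together with $\|W_2\Gamma_HW_2\|\lesssim1$. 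Collecting these estimates gives the asserted inequality.

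I do not expect a genuine obstacle here: the proof is essentially an assembly of Theorem \ref{theorem_section_5_3}, Lemmas \ref{lemma_section_2_6} and \ref{lemma_section_3_3}, and the external input \cite{MMT}. The one delicate point — and the reason the statement is restricted to $p,\tilde p>2$ — is that every invocation of Lemma \ref{lemma_section_3_3}(2) and of Christ--Kiselev's lemma requires strictly subadmissible time exponents; the double-endpoint case would instead need Lemma \ref{lemma_section_3_3}(1), whose hypotheses \eqref{lemma_section_3_3_2} and \eqref{lemma_section_3_3_4}--\eqref{lemma_section_3_3_6} demand Lorentz-refined endpoint inhomogeneous Strichartz estimates for $H_0$ that are not provided by the cited results in this variable-coefficient setting. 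The only other point needing care is the elementary verification, already noted above, that $|x|^2V\in L^\infty$ places $W_1,W_2$ in $L^{n,\infty}$ and makes $V$ form-bounded, so that Assumption \ref{assumption_A}, hence Theorem \ref{theorem_section_5_3}, is applicable.
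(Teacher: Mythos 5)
Your proof is correct and follows essentially the same route as the paper, which in its (terse) argument for Theorem \ref{theorem_section_5_4} simply invokes the Marzuola--Metcalfe--Tataru Strichartz estimates for $e^{-itH_0}$, the Kato smoothing of Theorem \ref{theorem_section_5_3} (applied with $V$ and with $V\equiv0$) giving $H$- and $H_0$-supersmoothness of $|x|^{-1}$, and the non-endpoint perturbation machinery of Lemma \ref{lemma_section_3_3}(2) together with Christ--Kiselev. The extra care you take in spelling out the inhomogeneous step via the Duhamel identities is a faithful expansion of what the paper leaves implicit, and your remark on why the double-endpoint case is excluded is exactly the correct diagnosis.
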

\vskip0.3cm
There exists a sufficiently small $\ep>0$ such that if $G_0$ satisfies
$$
\sum_{j,k=1}^n\sum_{|\alpha|\le2}\<x\>^{\mu+|\alpha|}|\partial^\alpha (g^{jk}(x)-\delta_{jk})|\le \ep
$$
then all the conditions on $G_0$ in Theorem \ref{theorem_section_5_4} are satisfied (see \cite{MMT}). Moreover, in such a case, $V(x)=a|x|^{-2}$ satisfies the conditions in Theorem \ref{theorem_section_5_4} if $a>-(1-3\ep/2)[(n-2)/2]^2$ since $H_0\ge -(1-\ep)\Delta$ and $H_1\ge -(2-3\ep)\Delta$ in the sense of forms on $C_0^\infty(\R^n)$.
\vskip0.3cm
%remark
\begin{remark}
Strichartz estimates for Schr\"odinger equations with long-range metrics and decaying potentials have been extensively studied by many authors (see \cite{MMT} and reference therein). However, to the best of authors' knowledge, the potential $V(x)$ has been assumed to be $C^2$ and satisfy $V(x)=O(\<x\>^{-2}(\log\<x\>)^{-2})$ at least in the previous literatures. On the other hand, Theorem \ref{theorem_section_5_4} allows some scaling-critical potentials such as $V(x)=a|x|^{-2}$.
\end{remark}

\section{The critical constant case}
\label{section_6}
In this section we consider the critical constant case $a=-C_{\sigma,n}$, namely the operator
\begin{align}
\label{6_1}
H_{\mathrm{crit},\sigma}:=(-\Delta)^\sigma-C_{\sigma,n}|x|^{-2\sigma},\quad 0<\sigma<n/2,
\end{align}
where we recall that $C_{\sigma,n}$ given by \eqref{best}, is the best constant of Hardy's inequality \eqref{Hardy}. Since $(-\Delta)^\sigma-C_{\sigma,n}|x|^{-2\sigma}$ is non-negative, $H_{\mathrm{crit},\sigma}$ can be defined as its Friedrichs extension as in the subcritical case $(-\Delta)^\sigma+a|x|^{-2\sigma}$ with $a>-C_{\sigma,n}$. However, $H_{\mathrm{crit},\sigma}$ is not uniformly elliptic in contrast with the subcritical case, {\it i.e}, $(-\Delta)^\sigma\not\lesssim H_{\mathrm{crit},\sigma}$ in general. Hence its form domain $D(H_{\mathrm{crit},\sigma}^{1/2})$ is strictly larger than $\H^\sigma$ and $H_{\mathrm{crit},\sigma}$ is not the form-sum of $(-\Delta)^\sigma$ and $-C_{\sigma,n}|x|^{-2\sigma}$. Due to this fact, it is difficult to obtain uniform Sobolev or Strichartz estimates for $H_{\mathrm{crit},\sigma}$ by only applying the perturbation argument as above. Nevertheless, we show that, for functions orthogonal to radial functions, the same  results as in the subcritical case still hold. For the radial case, we also prove some new results for a critical operator related with $H_{\mathrm{crit},\sigma}$ and discuss a few known results and open problems for $H_{\mathrm{crit},\sigma}$.

We introduce a few notations. Let $L^2_{\mathrm{rad}}(\R^n)\cong L^2(\R_+,r^{n-1}dr)$ be the subspace of radial functions in $L^2$ and $P_{\mathrm{rad}}:L^2(\R^n)\to L^2_{\mathrm{rad}}(\R^n)$ and $P_{\mathrm{rad}}^\perp:L^2(\R^n)\to L^2_{\mathrm{rad}}(\R^n)^\perp$ defined by
$$
P_{\mathrm{rad}}f(x)=\frac{1}{\mathop{\mathrm{Vol}}(\mathbb S^{n-1})}\int_{\mathbb S^{n-1}}f(|x|\omega)d\omega,\quad P_{\mathrm{rad}}^\perp:=\Id-P_{\mathrm{rad}}.
$$
$P_{\mathrm{rad}}$ (resp. $P_{\mathrm{rad}}^\perp$) is the orthogonal projection onto $L^2_{\mathrm{rad}}(\R^n)$ (resp. $L^2_{\mathrm{rad}}(\R^n)^\perp$). Note that $P_{\mathrm{rad}}$ and $P_{\mathrm{rad}}^\perp$ leave $\H^\sigma$ invariant since they commute with $|D|^\sigma$. In particular, we can write  $\H^\sigma=(P_{\mathrm{rad}}\H^\sigma)\oplus(P_{\mathrm{rad}}^\perp\H^\sigma)$.
Moreover, $P_{\mathrm{rad}}$ and $P_{\mathrm{rad}}^\perp$ also commute with $H_{\mathrm{crit},\sigma}$ and hence with $f(H_{\mathrm{crit},\sigma})$ for any $f\in L^2_{\loc}(\R)$ by the spectral decomposition theorem. We will also use the orthogonal decomposition of $L^2(\R^n)$ via the spherical harmonics expansion:
\begin{align*}
L^2(\R^n)=\bigoplus_{\ell=0}^\infty \left(L^2_{\mathrm{rad}}(\R^n)\otimes \mathcal Y_\ell\right),
\end{align*}
where $\mathcal Y_\ell\subset L^2(\mathbb S^{n-1})$ is the subspace of spherical harmonics of order $\ell$ (see \cite{StWe} for details). Let $P_\ell$ be the orthogonal projection onto $L^2_{\mathrm{rad}}(\R^n)\otimes \mathcal Y_\ell$. Note that $P_{\mathrm{rad}}=P_0$ and hence
\begin{align}
\label{Decomposition}
L^2_{\mathrm{rad}}(\R^n)^\perp=\bigoplus_{\ell=1}^\infty \left(L^2_{\mathrm{rad}}(\R^n)\otimes \mathcal Y_\ell\right),\quad P_{\mathrm{rad}}^\perp=\sum_{\ell=1}^\infty P_\ell.
\end{align}

 \subsection{The result for the operator $H_{\mathrm{crit},\sigma}$}
 \label{subsection_6_1}
We first state the results for $H_{\mathrm{crit},\sigma}$.

%theorem
\begin{theorem}	
\label{theorem_6_1}
Let $n\ge2$, $0<\sigma<n/2$ and $H_{\mathrm{crit},\sigma}$ be given by \eqref{6_1}. Then the following holds:
\begin{itemize}
\item Kato--Yajima estimates: if $\sigma-n/2<\gamma<\sigma-1/2$ then
\begin{align}
\label{theorem_6_1_KY}
\sup_{z\in\C\setminus[0,\infty)}\norm{|x|^{-\sigma+\gamma}|D|^{\gamma}P_{\mathrm{rad}}^\perp(H_{\mathrm{crit},\sigma}-z)^{-1}P_{\mathrm{rad}}^\perp|D|^{\gamma}|x|^{-\sigma+\gamma}}_{L^2\to L^2}<\infty.
\end{align}
%In other words, $|x|^{-\sigma+\gamma}|D|^{\gamma}P_{\mathrm{rad}}^\perp$ is $H_{\mathrm{crit},\sigma}$-supersmooth.
\vskip0.2cm
\item Uniform Sobolev estimates: if $n\ge3$ and $n/(n+1)\le \sigma<n/2$, then
\begin{align}
\label{theorem_6_1_US}
\norm{(H_{\mathrm{crit},\sigma}-z)^{-1}P_{\mathrm{rad}}^\perp}_{L^p\to L^{p'}}\lesssim |z|^{\frac{n}{\sigma}(\frac{1}{p}-\frac{1}{2})-1},\quad z\in \C\setminus[0,\infty),
\end{align}
for any $2n/(n+2\sigma)\le p\le 2(n+1)/(n+3)$. % where $(H_{\mathrm{crit},\sigma}-z)^{-1}$ may be taken to be the outgoing or incoming resolvent $(H_{\mathrm{crit},\sigma}-z\mp i0)^{-1}$ if $z>0$.
\vskip0.2cm
\item Strichartz estimates: let $n\ge3$, $1/2<\sigma<n/2$ and $(p,q),(\tilde p,\tilde q)$ be $n/2$-admissible pairs. When $1/2<\sigma<1$, we further assume $p,\tilde p>2$. Then we have
\begin{align}
\label{theorem_6_1_Strichartz}
\norm{|D|^{{2(\sigma-1)}/{p}}P_{\mathrm{rad}}^\perp\psi}_{L^p_tL^{q}_x}\lesssim \norm{\psi_0}_{L^2_x}+\norm{|D|^{{2(1-\sigma)}/\tilde p}F}_{L^{\tilde p'}_tL^{\tilde q'}_x}
\end{align}
for the solution $\psi$ to $(i\partial_t-H_{\mathrm{crit},\sigma})\psi=F$ with $\psi|_{t=0}=\psi_0$ given by
$$
\psi(t,x)=e^{-itH_{\mathrm{crit},\sigma}}\psi_0(x)-i\int_0^t e^{-i(t-s)H_{\mathrm{crit},\sigma}}F(s,x)ds.
$$
\end{itemize}
\end{theorem}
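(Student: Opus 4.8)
The plan is to exploit the fact that, although $H_{\mathrm{crit},\sigma}$ is not uniformly elliptic on $L^2(\R^n)$, it becomes so after restriction to $L^2_{\mathrm{rad}}(\R^n)^\perp$, because Hardy's inequality \eqref{Hardy} strictly improves there. Denote by $C_{\sigma,n}^{(\ell)}$ the sharp constant in \eqref{Hardy} restricted to the $\ell$-th spherical harmonics sector $L^2_{\mathrm{rad}}(\R^n)\otimes\mathcal Y_\ell$; it is classical that $\ell\mapsto C_{\sigma,n}^{(\ell)}$ is strictly increasing with $C_{\sigma,n}^{(0)}=C_{\sigma,n}$ (for $\sigma=1$ this is elementary, $C_{1,n}^{(\ell)}=[(n-2)/2]^2+\ell(\ell+n-2)$; for general $\sigma$ it follows from the explicit formula for the sectorial Hardy constants of the fractional Laplacian obtained via a ground-state substitution). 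Hence, by the decomposition \eqref{Decomposition}, there exists $\delta\in(0,1)$ with
\begin{align}
\label{improved_Hardy}
C_{\sigma,n}\int|x|^{-2\sigma}|u|^2dx\le(1-\delta)\int||D|^\sigma u|^2dx,\quad u\in P_{\mathrm{rad}}^\perp C_0^\infty(\R^n).
\end{align}

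First I would record the elementary commutation facts: $P_{\mathrm{rad}}$ and $P_{\mathrm{rad}}^\perp$ commute with $|D|^\sigma$, with $e^{-\delta|D|}$, with multiplication by every radial function (in particular $|x|^{-2\sigma}$ and all the weights $|x|^{-\sigma+\gamma}$, $|x|^{\pm\sigma}$), with the dilation generator $A$ (hence with $\<A\>^{-s}$), and with $H_{\mathrm{crit},\sigma}$ and all functions $f(H_{\mathrm{crit},\sigma})$; moreover $P_{\mathrm{rad}}^\perp=\Id-P_{\mathrm{rad}}$ is bounded on $L^p(\R^n)$ for $1<p<\infty$ by Minkowski's inequality. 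Consequently $H':=H_{\mathrm{crit},\sigma}\big|_{L^2_{\mathrm{rad}}(\R^n)^\perp}$ is a self-adjoint operator on the Hilbert space $\mathcal H_\perp:=L^2_{\mathrm{rad}}(\R^n)^\perp$, and by \eqref{improved_Hardy} its form domain is exactly $P_{\mathrm{rad}}^\perp\H^\sigma$ — this is the single point where passing to $P_{\mathrm{rad}}^\perp$ is indispensable, since on the radial part the form domain is strictly larger than $P_{\mathrm{rad}}\H^\sigma$. Using that $V=-C_{\sigma,n}|x|^{-2\sigma}$ is $(-2\sigma)$-homogeneous, so that $H_\ell=(2\sigma)^\ell H_0$ and $V_\ell=(2\sigma)^\ell V$ for $\ell=1,2$, together with \eqref{improved_Hardy}, one checks directly that $H'$ satisfies on $P_{\mathrm{rad}}^\perp C_0^\infty(\R^n)$ the analogues of \eqref{assumption_A_1}, \eqref{assumption_A_2}, \eqref{assumption_A_3} and of \eqref{corollary_1_1}, and that $|x|^\sigma V\in L^{n/\sigma,\infty}$; in other words, $H'$ plays on $\mathcal H_\perp$ exactly the role that a subcritical operator satisfying Assumption \ref{assumption_A} plays on $L^2(\R^n)$.

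The next step is simply to run the proofs of Section \ref{section_2}, of Theorems \ref{theorem_3} and \ref{theorem_4}, and of Theorem \ref{theorem_5} verbatim, with $H$ replaced by $H'$ and the ambient space by $\mathcal H_\perp$. Those proofs invoke nothing about $H$ beyond Assumption \ref{assumption_A} (and \eqref{corollary_1_1}), the mapping properties of $A$, $|D|$ and the weights, the commutator identities $[H_0,iA]=H_1$, $[[H_0,iA],iA]=H_2$ (unchanged on $\mathcal H_\perp$), the abstract perturbation schemes of Lemmas \ref{lemma_section_2_6}, \ref{lemma_section_3_3}, \ref{lemma_section_4_2}, and the free estimates of Lemmas \ref{lemma_section_3_1}, \ref{lemma_section_3_2}, \ref{lemma_section_4_1}. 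All of these restrict to $\mathcal H_\perp$ for free, since $e^{-itH_0}$, $e^{-\delta|D|}|D|^\sigma$ and $(H_0-z)^{-1}$ all commute with $P_{\mathrm{rad}}^\perp$. This produces the Kato--Yajima bound for $(H'-z)^{-1}$, the Kato smoothing estimate for $e^{-itH'}$, the Strichartz estimates for $H'$ and the uniform Sobolev estimates for $(H'-z)^{-1}$ on $\mathcal H_\perp$; inserting $P_{\mathrm{rad}}^\perp=P_{\mathrm{rad}}^\perp P_{\mathrm{rad}}^\perp$ on both sides and commuting $P_{\mathrm{rad}}^\perp$ through the weights and through $|D|^{\gamma}$, $|D|^{2(\sigma-1)/p}$, one obtains \eqref{theorem_6_1_KY}, \eqref{theorem_6_1_US} and \eqref{theorem_6_1_Strichartz} as stated (for \eqref{theorem_6_1_Strichartz} one also uses that $P_{\mathrm{rad}}^\perp$ and $|D|^{2(1-\sigma)/\tilde p}$ are bounded on the dual Lorentz spaces appearing on the right).

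The main obstacle is twofold, and both parts are more a matter of care than of new ideas. First, one must establish \eqref{improved_Hardy} cleanly: this requires the strict monotonicity (or at least $\inf_{\ell\ge1}C_{\sigma,n}^{(\ell)}=C_{\sigma,n}^{(1)}>C_{\sigma,n}^{(0)}$) of the sectorial Hardy constants for $(-\Delta)^\sigma$, which is elementary in the local case $\sigma=1$ but for $\sigma\ne1$ relies on the explicit value of $C_{\sigma,n}^{(\ell)}$. Second, one must justify rigorously that the Friedrichs extension $H_{\mathrm{crit},\sigma}$ respects the orthogonal decomposition $L^2=\bigoplus_\ell(L^2_{\mathrm{rad}}\otimes\mathcal Y_\ell)$ — i.e. that the defining form is block-diagonal with respect to it, so that $H'$ is genuinely self-adjoint on $\mathcal H_\perp$ with $D((H')^{1/2})=P_{\mathrm{rad}}^\perp\H^\sigma$ — before one may transplant the arguments of Sections \ref{section_2}--\ref{section_4} to the subspace.
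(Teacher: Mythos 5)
Your proposal is correct and follows essentially the same route as the paper: both establish an improved Hardy inequality on $P_{\mathrm{rad}}^\perp\H^\sigma$ via the strict monotonicity of the sectorial Hardy constants (the paper cites Yafaev for the fractional case), observe that $P_{\mathrm{rad}}^\perp$ commutes with $|D|$, $A$, radial weights, and $H_{\mathrm{crit},\sigma}$, so that the restricted operator is subcritical in the sense of Assumption \ref{assumption_A} on the invariant subspace $\mathcal H_\perp$, and then transplant the subcritical arguments of Sections \ref{section_2}--\ref{section_4} to $\mathcal H_\perp$ unchanged. The two points you flag at the end — identifying $\inf_{\ell\ge 1}C^{(\ell)}_{\sigma,n}=C^{(1)}_{\sigma,n}>C^{(0)}_{\sigma,n}$ and justifying that the Friedrichs extension is block-diagonal relative to the spherical-harmonics decomposition — are exactly the content of the paper's Lemma \ref{lemma_6_2} and the surrounding discussion, so they are the right things to still pin down.
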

\vskip0.2cm
%remark
\begin{remark}
\label{remark_6_1}
For the second order case $\sigma=1$, \eqref{theorem_6_1_US} and \eqref{theorem_6_1_Strichartz} were proved by \cite{Mizutani_JST} and \cite{Mizutani_JDE}, respectively. Otherwise, the results in Theorem \ref{theorem_6_1} are  new. As mentioned above, Theorem \ref{theorem_6_1} shows that essentially the same estimates as in the subcritical case still hold for the critical case if we restrict $(H_{\mathrm{crit},\sigma}-z)^{-1}$ and $e^{-itH_{\mathrm{crit},\sigma}}$ on the range of $P_{\mathrm{rad}}^\perp$.
\end{remark}

In the proof of this theorem, the following lemma plays a key role.
%{lemma}
\begin{lemma}
\label{lemma_6_2}
Let $0<\sigma<n/2$. Then the following improved Hardy inequality holds:
\begin{align}
\label{improved_Hardy}
\widetilde C_{\sigma,n}\int |x|^{-2\sigma}|u(x)|^2dx\le \int ||D|^\sigma u(x)|^2dx,\quad \widetilde C_{\sigma,n}:=\left\{\frac{2^\sigma\Gamma\left(\frac{n+2\sigma+2}{4}\right)}{\Gamma\left(\frac{n-2\sigma+2}{4}\right)}\right\}^2,
\end{align}
for $u\in P_{\mathrm{rad}}^\perp \H^\sigma$. Moreover, we have $C_{\sigma,n}<\widetilde C_{\sigma,n}$.
\end{lemma}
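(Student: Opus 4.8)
The plan is to reduce \eqref{improved_Hardy} to a family of sharp one‑dimensional (Mellin) inequalities, one for each angular momentum. Since the Fourier transform commutes with the spherical harmonic decomposition (Bochner's relation, see \cite{StWe}), the operator $|D|^\sigma=\mathcal F^{-1}|\xi|^\sigma\mathcal F$ leaves each sector $L^2_{\mathrm{rad}}(\R^n)\otimes\mathcal Y_\ell$ invariant, and multiplication by $|x|^{-\sigma}$ obviously does as well. Hence both sides of \eqref{improved_Hardy} split as orthogonal sums over $\ell$: writing $u=\sum_{\ell\ge1}u_\ell$ with $u_\ell=P_\ell u$ for $u\in P_{\mathrm{rad}}^\perp\H^\sigma$, one has $\||D|^\sigma u\|^2=\sum_{\ell\ge1}\||D|^\sigma u_\ell\|^2$ and $\||x|^{-\sigma}u\|^2=\sum_{\ell\ge1}\||x|^{-\sigma}u_\ell\|^2$ by \eqref{Decomposition} (finiteness of $\||x|^{-\sigma}u\|$ being granted by the ordinary Hardy inequality \eqref{Hardy}). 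Therefore it suffices to show that the sharp constant $C_\ell$ in the sector‑wise inequality $\||D|^\sigma v\|^2\ge C_\ell\||x|^{-\sigma}v\|^2$, $v\in P_\ell\H^\sigma$, satisfies $C_\ell\ge\widetilde C_{\sigma,n}$ for every $\ell\ge1$, since then $\||D|^\sigma u\|^2\ge\sum_{\ell\ge1}C_\ell\||x|^{-\sigma}u_\ell\|^2\ge\widetilde C_{\sigma,n}\||x|^{-\sigma}u\|^2$.

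Next I would recall the explicit value of $C_\ell$. The Rayleigh quotient $\||D|^\sigma v\|^2/\||x|^{-\sigma}v\|^2$ on $P_\ell\H^\sigma$ is invariant under dilations $v\mapsto v(\lambda\,\cdot\,)$, so its infimum is computed by the Mellin transform, exactly as in the classical derivation of $C_{\sigma,n}$ behind \cite[Theorem 2.5]{Her}: conjugating $|x|^\sigma(-\Delta)^\sigma|x|^\sigma$ restricted to the $\ell$‑sector by $r^{n/2}$ and the Mellin transform turns it into multiplication by the symbol
\[
\Lambda_\ell(\tau)=2^{2\sigma}\,\frac{\bigl|\Gamma\bigl(\tfrac{n+2\sigma+2\ell}{4}+\tfrac{i\tau}{2}\bigr)\bigr|^2}{\bigl|\Gamma\bigl(\tfrac{n-2\sigma+2\ell}{4}+\tfrac{i\tau}{2}\bigr)\bigr|^2},\qquad \tau\in\R .
\]
Using the product representation $|\Gamma(a+i\tau)|^2=\Gamma(a)^2\prod_{k\ge0}(a+k)^2\bigl((a+k)^2+\tau^2\bigr)^{-1}$ together with $\tfrac{n+2\sigma+2\ell}{4}>\tfrac{n-2\sigma+2\ell}{4}>0$, one checks that each factor of $\Lambda_\ell$ is increasing in $\tau^2$, so $\Lambda_\ell$ is increasing in $|\tau|$ and hence
\[
C_\ell=\inf_{\tau\in\R}\Lambda_\ell(\tau)=\Lambda_\ell(0)=2^{2\sigma}\Bigl(\tfrac{\Gamma((n+2\sigma+2\ell)/4)}{\Gamma((n-2\sigma+2\ell)/4)}\Bigr)^2,\qquad \ell\ge0 .
\]
In particular $C_0=C_{\sigma,n}$ and $C_1=\widetilde C_{\sigma,n}$.

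It then remains to prove the monotonicity $C_0<C_1\le C_2\le\cdots$, which yields both assertions of the lemma at once. For this I would introduce the real‑variable function $c(\lambda)=2^{2\sigma}\Gamma((n+2\sigma+2\lambda)/4)^2/\Gamma((n-2\sigma+2\lambda)/4)^2$ on $[0,\infty)$, so $C_\ell=c(\ell)$, and differentiate its logarithm:
\[
\frac{d}{d\lambda}\log c(\lambda)=\psi\!\Bigl(\tfrac{n+2\sigma+2\lambda}{4}\Bigr)-\psi\!\Bigl(\tfrac{n-2\sigma+2\lambda}{4}\Bigr),
\]
where $\psi=\Gamma'/\Gamma$ is the digamma function. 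Since $0<\sigma<n/2$ forces $(n-2\sigma+2\lambda)/4>0$ for all $\lambda\ge0$, and since $\psi'(x)=\sum_{k\ge0}(x+k)^{-2}>0$ makes $\psi$ strictly increasing on $(0,\infty)$, and $(n+2\sigma+2\lambda)/4>(n-2\sigma+2\lambda)/4$, we get $\frac{d}{d\lambda}\log c(\lambda)>0$. Thus $c$ is strictly increasing, so $C_\ell=c(\ell)\ge c(1)=\widetilde C_{\sigma,n}$ for every $\ell\ge1$ — giving \eqref{improved_Hardy} by the reduction above — and $C_{\sigma,n}=c(0)<c(1)=\widetilde C_{\sigma,n}$. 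The only step that is not entirely routine is the Mellin‑transform evaluation of $C_\ell$ in the second paragraph; as this is a minor variant of the classical computation of $C_{\sigma,n}$, the genuinely new ingredient is the elementary digamma monotonicity that forces the gain when one discards the radial sector.
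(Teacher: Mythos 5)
Your proof follows the same route as the paper: decompose over spherical harmonics, identify the sector-wise sharp constant $C_\ell$, and use strict monotonicity of $C_\ell$ in $\ell$ to get $\inf_{\ell\ge1}C_\ell=C_1=\widetilde C_{\sigma,n}>C_0=C_{\sigma,n}$. The only difference is that the paper simply cites Yafaev (\cite[Prop.~2.8 and the discussion after (2.28)]{Yafaev_JFA}) for both the explicit formula for $C_\ell$ and its monotonicity, whereas you rederive them from scratch --- the Mellin-symbol computation with the product expansion $|\Gamma(a+i\tau)|^2=\Gamma(a)^2\prod_{k\ge0}(a+k)^2/((a+k)^2+\tau^2)$, and the digamma argument $\frac{d}{d\lambda}\log c(\lambda)=\psi(\tfrac{n+2\sigma+2\lambda}{4})-\psi(\tfrac{n-2\sigma+2\lambda}{4})>0$; both of these are correct, so your write-up is a self-contained version of the paper's proof.
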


%proof
\begin{proof}
For $\mathcal X\subset \H^\sigma$, let $C[\mathcal X]$ denote the best constant of Hardy's inequality in $\mathcal X$:
$$
C[\mathcal X]\int |x|^{-2\sigma}|u(x)|^2dx\le \int ||D|^\sigma u(x)|^2dx,\quad u\in \mathcal X.
$$
When $\mathcal X=P_\ell C_0^\infty(\R^n\setminus\{0\})$, it follows from \cite[Proposition 2.8]{Yafaev_JFA} that
$$
C[P_\ell C_0^\infty(\R^n\setminus\{0\})]=C_\ell:=\left\{\frac{2^\sigma\Gamma\left(\frac{n+2\sigma+2\ell}{4}\right)}{\Gamma\left(\frac{n-2\sigma+2\ell}{4}\right)}\right\}^2.
$$
Moreover, since $P_\ell$ commutes with $|D|^\sigma$, \eqref{Decomposition} implies that, for any $u\in P_{\mathrm{rad}}^\perp C_0^\infty(\R^n\setminus\{0\})$, $|D|^\sigma u$ can be decomposed as $|D|^\sigma u=|D|^\sigma P_1u\oplus |D|^\sigma P_2u\oplus\cdots$ in $L^2(\R^n)$. Hence,
$$
C[P_{\mathrm{rad}}^\perp C_0^\infty(\R^n\setminus\{0\})]=\inf_{\ell\ge 1}C_{\ell}.
$$
Since $C_\ell$ is strictly increasing in $\ell\in \N\cup\{0\}$ (see \cite[the discussion after (2.28)]{Yafaev_JFA}), we have $$\inf_{\ell\ge 1}C_{\ell}=C_1=\widetilde C_{\sigma,n}>C_0=C_{\sigma,n}$$ and \eqref{improved_Hardy} for $u\in P_{\mathrm{rad}}^\perp C_0^\infty(\R^n\setminus\{0\})$. The lemma then follows by the density argument.
\end{proof}

%proof
\begin{proof}[Proof of Theorem \ref{theorem_6_1}]
We first prove \eqref{theorem_6_1_KY}. By completely the same argument as in Section \ref{section_2}, \eqref{theorem_6_1_KY} is deduced from the following resolvent estimate:
\begin{align}
\label{theorem_6_1_KY_proof_1}
\sup_{\delta>0}\sup_{z\in \C\setminus[0,\infty)}\norm{\<A\>^{-s}|D|^\sigma e^{-\delta|D|} P_{\mathrm{rad}}^\perp(H_{\mathrm{crit},\sigma}-z)^{-1}P_{\mathrm{rad}}^\perp e^{-\delta|D|}|D|^\sigma  \<A\>^{-s}}<\infty.
\end{align}
Indeed, the proof of replacing $|x|^{-\sigma+\gamma}|D|^\gamma$ by $\<A\>^{-s}|D|^\sigma e^{-\delta|D|}$ given in Section \ref{section_2} is independent of the operator $H$ and does not use any property of the potential $V$.

In order to prove \eqref{theorem_6_1_KY_proof_1}, one can also use essentially the same argument as in  Section \ref{section_2} with $(H,V)$ replaced by $(H_{\mathrm{crit},\sigma},-C_{\sigma,n}|x|^{-2})$. Indeed, thanks to Lemma \ref{lemma_6_2}, $H=H_{\mathrm{crit},\sigma}$ satisfies \eqref{assumption_A_1}--\eqref{assumption_A_3} for all $u\in P_{\mathrm{rad}}^\perp C_0^\infty(\R^n)$. Therefore, if we consider a self-adjoint operator $H_{\mathrm{crit},\sigma}^\perp:=H_{\mathrm{crit},\sigma}P_{\mathrm{rad}}^\perp$ defined on the closed subspace $P_{\mathrm{rad}}^\perp L^2(\R^n)$ of $L^2(\R^n)$ with form domain $D((H_{\mathrm{crit},\sigma}^\perp)^{1/2})=P_{\mathrm{rad}}^\perp \H^\sigma$, then $H_{\mathrm{crit},\sigma}^\perp$ is subcritical in the sense that
\begin{align*}
\<H_{\mathrm{crit},\sigma}^\perp u,u\>&\gtrsim \<H_0u,u\>,\\
\<[H_{\mathrm{crit},\sigma}^\perp,iA]u,u\>&\gtrsim \<H_0u,u\>,\\
|\<[[H_{\mathrm{crit},\sigma}^\perp,iA],iA]u,u\>|&\lesssim \<[H_{\mathrm{crit},\sigma}^\perp,iA]u,u\>
\end{align*}
hold for $u\in P_{\mathrm{rad}}^\perp \H^\sigma$, where $H_0=(-\Delta)^\sigma$. Moreover, since $P_{\mathrm{rad}}^\perp$ commutes with the self-adjoint operators $|D|$ and $A$, $P_{\mathrm{rad}}^\perp$ also commutes with $f(|D|)$ and $f(A)$ for any $f\in L^2_{\loc}(\R)$ by the spectral decomposition theorem. Hence we can repeat the same argument as in the proof of Theorem \ref{theorem_section_2_1} (with $L^2,\H^\sigma$ and $C_0^\infty(\R^n)$ replaced by $P_{\mathrm{rad}}^\perp L^2,P_{\mathrm{rad}}^\perp \H^\sigma$ and $P_{\mathrm{rad}}^\perp C_0^\infty(\R^n)$, respectively) obtaining the following estimate:
\begin{align*}
\sup_{\delta>0}\sup_{z\in \C\setminus[0,\infty)}\norm{\<A\>^{-s}|D|^\sigma e^{-\delta|D|} (H_{\mathrm{crit},\sigma}^\perp-z)^{-1}e^{-\delta|D|}|D|^\sigma  \<A\>^{-s}}_{P_{\mathrm{rad}}^\perp L^2\to P_{\mathrm{rad}}^\perp L^2}<\infty.
\end{align*}
This estimate is nothing but \eqref{theorem_6_1_KY_proof_1} since $P_{\mathrm{rad}}^\perp$  commutes with $e^{-\delta|D|}|D|^\sigma  \<A\>^{-s}$ and $$(H_{\mathrm{crit},\sigma}^\perp-z)^{-1}P_{\mathrm{rad}}^\perp=P_{\mathrm{rad}}^\perp(H_{\mathrm{crit},\sigma}-z)^{-1}P_{\mathrm{rad}}^\perp.$$ This completes the proof of \eqref{theorem_6_1_KY}.
\vskip0.2cm
We next prove \eqref{theorem_6_1_US} and \eqref{theorem_6_1_Strichartz}. As above, we can replace $H_{\mathrm{crit},\sigma}$ by $H_{\mathrm{crit},\sigma}^{\perp}$ . Let $H_0^\perp:=H_0P_{\mathrm{rad}}^\perp$, $W_1=-C_{\sigma,n}|x|^{-\sigma}P_{\mathrm{rad}}^\perp$ and $W_2=|x|^{-\sigma}P_{\mathrm{rad}}^\perp$. Since $ P_{\mathrm{rad}}^\perp$ commutes with $|D|^\alpha$ for any $\alpha$ and $P_{\mathrm{rad}}^\perp\in \mathbb B(L^p)$ for any $p$, $(H_0^\perp-z)^{-1}$ and $e^{-itH_0^\perp}$ satisfy the same statements as in Lemmas \ref{lemma_section_4_1} and \ref{lemma_section_3_1}, respectively. Moreover, Lemma \ref{lemma_6_2} implies $$D((H_{\mathrm{crit},\sigma}^{\perp})^{1/2})=D((H_0^\perp)^{1/2})=D(W_j)=P_{\mathrm{rad}}^\perp\H^\sigma,\quad j=1,2.$$ Hence, for $u,v\in P_{\mathrm{rad}}^\perp\H^\sigma$, $\<H_{\mathrm{crit},\sigma}^{\perp}u,v\>$ can be written in the form:
$$
\<H_{\mathrm{crit},\sigma}^{\perp}u,v\>=\<H_0^\perp u,v\>+\<W_1u,W_2v\>.
$$
We also know from \eqref{theorem_6_1_KY} that $W_1,W_2$ are $H_{\mathrm{crit},\sigma}^{\perp}$-supersmooth if $\sigma>1/2$. Therefore, we can apply the same perturbation arguments based on Lemmas \ref{lemma_section_4_2} and \ref{lemma_section_3_3} as in Sections \ref{section_4} and \ref{section_3} to obtain \eqref{theorem_6_1_US} and \eqref{theorem_6_1_Strichartz}, respectively.
\end{proof}

\subsection{Some known results and open problems}
By virtue of Theorem \ref{theorem_6_1}, $H_{\mathrm{crit},\sigma}$ satisfies the same resolvent and Strichartz estimates as in the subcritical case except for the radial part. Here we discuss a few known results in the second order case $\sigma=1$ and open problems in the general case $\sigma\neq1$ for the radial part.

Let us first consider the second order case $\sigma=1$. Concerning uniform resolvent estimates, there seems to be no known results on the Kato--Yajima estimate. On the other hand, it was proved by \cite{Mizutani_JST} that the uniform Sobolev estimate
\begin{align}
\label{section_6_3}
\norm{(H_{\mathrm{crit},1}-z)^{-1}P_{\mathrm{rad}}}_{L^p\to L^{p'}}\lesssim |z|^{n(\frac1p-\frac12)-1},\quad z\in \C\setminus[0,\infty),
\end{align}
holds if and only if $2n/(n-2)<p\le 2(n+1)/(n+3)$. Concerning Strichartz estimates for $H_{\mathrm{crit},1}$, the non-endpoint estimates were proved by \cite{Suzuki}. Moreover, it was proved in \cite{Mizutani_JDE} that the following endpoint estimate
\begin{align}
\label{section_6_2}
\norm{e^{-itH_{\mathrm{crit},1}}P_{\mathrm{rad}}\psi_0}_{L^2_tL^{\frac{2n}{n-2},r}_x}\lesssim \norm{\psi_0}_{L^2_x}
\end{align}
holds if and only if $r=\infty$ (see  \cite{Mizutani_JDE}). In particular, the standard endpoint Strichartz estimate (with $r=\frac{2n}{n-2}$) does not hold. Note that \cite{Suzuki,Mizutani_JDE,Mizutani_JST} also considered the part on the range of $P_{\mathrm{rad}}^\perp$ not only the radial part.

An essential ingredient in the proof of \eqref{section_6_3} and \eqref{section_6_2} is that $H_{\mathrm{crit},1}$ is unitarily equivalent to $-\Delta_{\R^2}$ on the range of $P_{\mathrm{rad}}$. More precisely, we have
\begin{align*}
r^{\frac{n-2}{2}}\left(\frac{d^2}{dr^2}+\frac{n-1}{r}\frac{d}{dr}+\frac{(n-2)^2}{4r^2}\right)r^{-\frac{n-2}{2}}=\frac{d^2}{dr^2}+\frac1r\frac{d}{dr},\quad r>0,
\end{align*}
and hence $\Lambda_1^{-1}H_{\mathrm{crit},1}\Lambda_1f=-\Delta_{\R^2}f$ for radial function $f$, where $$\Lambda_\sigma f:=|x|^{-\frac{n-2\sigma}{2}}f.$$ Note that $\Lambda_1$ maps from $L^2_{\mathrm{rad}}(\R^2)$ to $L^2_{\mathrm{rad}}(\R^n)$. It is worth noting that $|x|^{-\frac{n-2}{2}}$ is a zero resonant state of $H_{\mathrm{crit},1}$, {\it i.e.}, a non-$L^2$ ground state of  $H_{\mathrm{crit},1}$. Precisely, $|x|^{-\frac{n-2}{2}}$ satisfies $|x|^{-\frac{n-2}{2}}\notin L^2$, $\<x\>^{-s}|x|^{-\frac{n-2}{2}}\in L^2$ if $s>1$ and $$H_{\mathrm{crit},1}|x|^{-\frac{n-2}{2}}=0,\quad x\neq0.$$ Hence $\Lambda_1^{-1} H_{\mathrm{crit},1}\Lambda_1$ is often called the ground state representation of $H_{\mathrm{crit},1}$ (\cite{FLS}). By virtue of the ground state representation, \eqref{section_6_3} and \eqref{section_6_2} for radial functions follow from the corresponding results for $-\Delta_{\R^2}$ (see \cite{Mizutani_JDE,Mizutani_JST} as well as Subsection \ref{subsection_6_3} below).
\vskip0.2cm
Next we consider the case $\sigma\neq1$ where, to the best of our knowledge, there is no existing result on the uniform resolvent and Strichartz estimates. The zero resonant state for $H_{\mathrm{crit},\sigma}$ is given by $|x|^{-\frac{n-2\sigma}{2}}$. However, the main difference with the case $\sigma=1$ is that the radial part of the ground state representation $\Lambda_\sigma^{-1} H_{\mathrm{crit},\sigma}\Lambda_\sigma$ is not a simple operator such as the fractional or higher-order Laplacians. For instance, for $\sigma=2$, we have
$$
\Lambda_2^{-1}H_{\mathrm{crit},2}\Lambda_2f
%=\left(\Delta_{\R^4}-C_{2,n}^{1/2}|x|^{-2}\right)^2-C_{2,n}|x|^{-4}
=\Delta_{\R^4}^2f-C_{2,n}^{1/2}(\Delta_{\R^4}|x|^{-2}+|x|^{-2}\Delta_{\R^4})f
$$
for radial function $f$ (see \cite{FLS} for the ground state representation of $H_{\mathrm{crit},\sigma}$ with $0<\sigma<1/2$). Hence it seems to be difficult to apply the above simple argument to the general case $\sigma\neq1$ at least directly. It would be interesting to investigate uniform Sobolev and Strichartz estimates for the operator $\Lambda_\sigma^{-1} H_{\mathrm{crit},\sigma}\Lambda_\sigma$ (restricted to radial functions) with $0<\sigma<n/2$ and $\sigma\neq1$, which will certainly play a crucial role in the proof of such estimates for the original operator $H_{\mathrm{crit},\sigma}$.

\subsection{Ground state representation and some results for a critical operator}
\label{subsection_6_3}
Here we shall explain more precisely the above method, based on the ground state representation, reducing the problem to the study of a simple operator without potential, which may be of independent interest. To this end, instead of $H_{\mathrm{crit},\sigma}$, we consider an operator whose radial part is unitarily equivalent to $(-\Delta_{\R^{2\sigma}})^\sigma$ via the ground state representation. Namely, we let $0<\sigma<n/2$ and $\sigma\in \N$ and consider the operator
$$
\widetilde H_{\mathrm{crit},\sigma}:=\Lambda_\sigma(-\Delta_{\R^{2\sigma}})^\sigma\Lambda_\sigma^{-1}\quad\text{on}\ \R^n,
$$
where note that $\Lambda_\sigma^{-1}=|x|^{\frac{n-2\sigma}{2}}:L^2_{\mathrm{rad}}(\R^n)\to L^2_{\mathrm{rad}}(\R^{2\sigma})$ and that, although it will not be used in the following argument, the radial part of this operator is explicitly given by
\begin{align*}
\widetilde H_{\mathrm{crit},\sigma}P_{\mathrm{rad}}%&=\left[-r^{-\frac{n-2\sigma}{2}}\left(\frac{d^2}{dr^2}+\frac{2\sigma-1}{r}\frac{d}{dr}\right)r^{\frac{n-2\sigma}{2}}\right]^\sigma\\&
=\left(-\Delta_{\R^n}-\frac{(n-2\sigma)(n+2\sigma-4)}{4|x|^2}\right)^\sigma P_{\mathrm{rad}}.
\end{align*}
It is worth noting that the right hand side coincides with $H_{\mathrm{crit},1}P_{\mathrm{rad}}$ if $\sigma=1$. Then we have the following weak-type endpoint Strichartz estimate:

%theorem
\begin{theorem}	
\label{theorem_6_2}
Let $n\ge3$, $0<\sigma<n/2$ and $\sigma\in \N$. Then, for any radial $\psi_0\in L^2_{\mathrm{rad}}(\R^n)$, one has
$$
\norm{e^{-it\widetilde H_{\mathrm{crit},\sigma}}\psi_0}_{L^2_tL^{\frac{2n}{n-2\sigma},\infty}_x}\lesssim \norm{\psi_0}_{L^2_x}.
$$
\end{theorem}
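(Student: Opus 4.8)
The plan is to strip off the potential via the ground state representation, reducing the assertion to a weighted weak-type endpoint Strichartz estimate for the free fractional propagator in \emph{effective dimension} $2\sigma$, and then to establish that estimate by a direct analysis of the radial propagator in which the unavoidable endpoint loss is exactly absorbed into the weak-$L^{2n/(n-2\sigma)}$ norm on $\R^n$.

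First I would observe that $\Lambda_\sigma$ is, up to a fixed positive constant, a unitary map from $L^2_{\mathrm{rad}}(\R^{2\sigma})$ onto $L^2_{\mathrm{rad}}(\R^n)$; in polar coordinates this is just the identity $\int_{\R^n}|x|^{-(n-2\sigma)}|g(|x|)|^2\,dx=c_{n,\sigma}\int_{\R^{2\sigma}}|g|^2\,dy$. Since $\widetilde H_{\mathrm{crit},\sigma}$ is by definition the conjugate $\Lambda_\sigma(-\Delta_{\R^{2\sigma}})^\sigma\Lambda_\sigma^{-1}$, it follows that $e^{-it\widetilde H_{\mathrm{crit},\sigma}}=\Lambda_\sigma\,e^{-it(-\Delta_{\R^{2\sigma}})^\sigma}\,\Lambda_\sigma^{-1}$ on $L^2_{\mathrm{rad}}(\R^n)$. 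Writing $g_0:=\Lambda_\sigma^{-1}\psi_0$, so that $\|g_0\|_{L^2(\R^{2\sigma})}\sim\|\psi_0\|_{L^2(\R^n)}$, and $u(t,\cdot):=e^{-it(-\Delta_{\R^{2\sigma}})^\sigma}g_0$, the asserted inequality is equivalent to the weighted estimate
\begin{equation*}
\bigl\|\, |x|^{-\frac{n-2\sigma}{2}}\,u\bigl(t,|x|\bigr)\bigr\|_{L^2_t L^{\frac{2n}{n-2\sigma},\infty}_x(\R^n)}\lesssim \|g_0\|_{L^2(\R^{2\sigma})},\qquad g_0\in L^2_{\mathrm{rad}}(\R^{2\sigma}),
\end{equation*}
where $u(t,|x|)$ denotes the radial function $u(t,\cdot)$ on $\R^{2\sigma}$ evaluated at the radius $|x|$. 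The crucial homogeneity is that $\tfrac{n-2\sigma}{2}\cdot\tfrac{2n}{n-2\sigma}=n$, so that $|x|^{-\frac{n-2\sigma}{2}}$ — the zero resonance of $\widetilde H_{\mathrm{crit},\sigma}$ — lies exactly on the boundary $L^{\frac{2n}{n-2\sigma},\infty}(\R^n)\setminus L^{\frac{2n}{n-2\sigma}}(\R^n)$; this matching is precisely why one obtains a weak-type rather than a strong-type bound.

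To prove the displayed free estimate I would use the explicit representation of $e^{-it(-\Delta_{\R^{2\sigma}})^\sigma}$ on radial functions as an oscillatory Hankel transform of order $\sigma-1$, decompose $u$ by a frequency Littlewood--Paley partition and rescale to a single unit frequency band, and then combine the unit-frequency dispersive decay $\|e^{-it(-\Delta_{\R^{2\sigma}})^\sigma}P_0\|_{L^1\to L^\infty}\lesssim\langle t\rangle^{-\sigma}$ with the radial Sobolev (Strauss) embedding and the free Strichartz estimates of Lemma \ref{lemma_section_3_1} applied in dimension $2\sigma$. The effect of multiplying by $|x|^{-\frac{n-2\sigma}{2}}$ and re-dimensioning from $\R^{2\sigma}$ to $\R^n$ is to convert the borderline (critical) space-time bound on $\R^{2\sigma}$ into the weak-$L^{\frac{2n}{n-2\sigma}}$ bound on $\R^n$. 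When $\sigma=1$ (so $2\sigma=2$) this free estimate is exactly the two-dimensional radial analysis carried out in \cite{Mizutani_JDE}, so that case may simply be quoted; for $\sigma\ge 2$ one works in dimension $2\sigma\ge 4$, where Lemma \ref{lemma_section_3_1} provides the genuine gain-of-derivative endpoint estimate $\bigl\||D|^{\sigma-1}e^{-it(-\Delta_{\R^{2\sigma}})^\sigma}g_0\bigr\|_{L^2_t L^{2\sigma/(\sigma-1),2}_x}\lesssim\|g_0\|_{L^2}$, after which the remaining task is to remove $|D|^{\sigma-1}$ and pass to $\R^n$.

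The main obstacle is exactly this last transfer. Removing $|D|^{\sigma-1}$ on $L^{2\sigma/(\sigma-1)}(\R^{2\sigma})$ sits at the forbidden Sobolev exponent, so $u(t,\cdot)$ is not controlled in $L^\infty(\R^{2\sigma})$ but only in a Besov-type substitute; the delicate point — and the reason the conclusion is of weak type — is to show that after multiplication by $|x|^{-\frac{n-2\sigma}{2}}$ and re-dimensioning the result nevertheless lands in $L^{\frac{2n}{n-2\sigma},\infty}(\R^n)$, i.e.\ that the endpoint defect is exactly weak-$L^{2n/(n-2\sigma)}$ and does not deteriorate to a logarithmic divergence. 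I expect this to require, beyond the Strichartz input, genuine use of the structure of $u$ (for instance the radial Kato smoothing estimate near the origin), together with an orthogonality argument relating the frequency Littlewood--Paley pieces of $u$ to a dyadic-annulus decomposition in the spatial variable on $\R^n$ — these two decompositions being essentially dual for radial functions, and the homogeneity of the weight matching that of the weak-$L^{2n/(n-2\sigma)}$ quasinorm. All other ingredients (the unitarity of $\Lambda_\sigma$, the free dispersive and Strichartz estimates, the radial Sobolev embedding, and the properties of Lorentz spaces recalled in Appendix \ref{appendix_misc}) are standard.
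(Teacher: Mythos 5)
Your reduction via the ground state representation, the observation that $\Lambda_\sigma$ is unitary up to a constant between $L^2_{\mathrm{rad}}(\R^{2\sigma})$ and $L^2_{\mathrm{rad}}(\R^n)$, and the remark that the weak-type nature of the bound is tied to the homogeneity $\frac{n-2\sigma}{2}\cdot\frac{2n}{n-2\sigma}=n$ (so that the weight lies exactly in $L^{\frac{2n}{n-2\sigma},\infty}$) are all correct and match the paper's setup. However, you then treat the remaining estimate as an open ``obstacle'' and sketch a speculative route via Littlewood--Paley, radial Sobolev, local smoothing and a frequency/spatial orthogonality argument. This is where the proposal has a genuine gap: after applying O'Neil's inequality in the form $\norm{fg}_{L^{p,\infty}}\lesssim\norm{g}_{L^\infty}\norm{f}_{L^{p,\infty}}$ with $f=|x|^{-\frac{n-2\sigma}{2}}$ and $g$ the free evolution, the task reduces not to a Besov- or weak-type substitute for $L^\infty$, but to the genuine strong endpoint Strichartz estimate
$$
\norm{e^{-it(-\Delta_{\R^{2\sigma}})^\sigma}g_0}_{L^2_t L^\infty(\R^{2\sigma})}\lesssim\norm{g_0}_{L^2(\R^{2\sigma})}
$$
\emph{for radially symmetric} $g_0$. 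You implicitly rule this out because the Sobolev exponent is forbidden, but radial symmetry rescues it: for the fractional Laplacian of order matching the dimension ($a=d=2\sigma$), the radial $L^2_tL^\infty_x$ endpoint Strichartz estimate is a known theorem of Guo--Li--Nakanishi--Yan \cite[Theorem 1.2 (1)]{GLNY} (with $(d,a,s)=(2\sigma,2\sigma,0)$ in their notation). Once this is invoked, no dispersive-kernel analysis, no Kato smoothing near the origin, and no dyadic-annulus orthogonality argument are needed; the weak-type conclusion is produced entirely by the weight, not by an endpoint defect of the propagator.

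In short, the key lemma you are missing is precisely the one that makes the proof short: the paper's argument is the three steps (i) conjugate by $\Lambda_\sigma$, (ii) peel off $|x|^{-\frac{n-2\sigma}{2}}$ by O'Neil's inequality into the weak Lorentz norm, and (iii) apply the radial $L^2_tL^\infty_x$ endpoint Strichartz estimate from \cite{GLNY} in dimension $2\sigma$. Your proposed hard analysis attempts to reprove (iii) from scratch and, as written, does not close; the ``obstacle'' you flag is resolved by citing that result rather than by circumventing it.
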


%proof
\begin{proof}
The definition of $\widetilde H_{\mathrm{crit},\sigma}$ implies
$
e^{-it\widetilde H_{\mathrm{crit},\sigma}}P_{\mathrm{rad}}=\Lambda_\sigma e^{it(-\Delta_{\R^{2\sigma}})^\sigma}\Lambda_\sigma^{-1}P_{\mathrm{rad}}
$ and hence
\begin{align*}
\norm{e^{-it\widetilde H_{\mathrm{crit},\sigma}}\psi_0}_{L^2(\R;L^{\frac{2n}{n-2\sigma},\infty}(\R^n))}
&=\norm{|x|^{-\frac{n-2\sigma}{2}}e^{-it(-\Delta_{\R^{2\sigma}})^\sigma}\Lambda_\sigma^{-1}\psi_0}_{L^2(\R;L^{\frac{2n}{n-2\sigma},\infty}(\R^n))}\\
&\lesssim \norm{|x|^{-\frac{n-2\sigma}{2}}}_{L^{\frac{2n}{n-2\sigma},\infty}(\R^n)}\norm{e^{-it(-\Delta_{\R^{2\sigma}})^\sigma}\Lambda_\sigma^{-1}\psi_0}_{L^2(\R;L^\infty_{\mathrm{rad}}(\R^n))}\\
&\lesssim \norm{e^{-it(-\Delta_{\R^{2\sigma}})^\sigma}\Lambda_\sigma^{-1}\psi_0}_{L^2(\R;L^\infty_{\mathrm{rad}}(\R^{2\sigma}))}
\end{align*}
for $\psi_0\in L^2_{\mathrm{rad}}(\R^n)$, where we have used H\"older's inequality \eqref{Holder} and the fact $L^\infty_{\mathrm{rad}}(\R^n)\cong L^\infty_{\mathrm{rad}}(\R^{2\sigma})$. Here we know from \cite[Theorem 1.2 (1)]{GLNY} (with $(d,a,s)=(2\sigma,2\sigma,0)$ in their notation) that $e^{it(-\Delta_{\R^{2\sigma}})^\sigma}$ satisfies the endpoint Strichartz estimate
$$
\norm{e^{-it(-\Delta_{\R^{2\sigma}})^\sigma}f}_{L^2(\R;L^\infty(\R^{2\sigma}))}\lesssim \norm{f}_{L^2(\R^{2\sigma})}
$$
for any radial $f\in L^2(\R^{2\sigma})$. This, together with the above computations, implies the desired estimate since $\Lambda_\sigma^{-1}:L^2_{\mathrm{rad}}(\R^n)\to L^2_{\mathrm{rad}}(\R^{2\sigma})$.
\end{proof}

%remark
\begin{remark}
We hope that the above method could be used in the study of Strichartz estimates for more general Schr\"odinger type operators with zero resonances than $\widetilde H_{\mathrm{crit},\sigma}$. %As a toy model, one can consider for instance the so-called Talenti function $W(r)=[n(n-2)]^{\frac{n-2}{2}}[n(n-2)+r^2]^{-\frac{n-2}{2}}$, which is a solution to the nonlinear elliptic equation $-\Delta W-W^{\frac{4}{n-2}}W=0$. Note that $W(x)$ naturally appears in the context of large data scattering theory for energy critical focusing nonlinear dispersive or wave equations, since it is the (virtual) minimizer of Sobolev's inequality. When $n=3,4$, $W$ is a zero resonant state of the Schr\"odinger operator $H_W=-\Delta-W(x)^{\frac{4}{n-2}}$. Then the radial part of its GRS, $W^{-1}H_WW$, is given by$$-\frac{d^2}{dr^2}-\frac{3-n+W(r)^{\frac{2}{n-2}}}{r}\frac{d}{dr}\quad{on}\quad L^2(\R_+,W(r)^2r^{n-1}dr).$$It would be interesting to investigate resolvent and Strichartz estimates for this operator which, we hope, will play an important role in the study of the operator $H_W$, as well as give a new perspective to investigate global dynamics for Schr\"odinger equations with general scaling-critical potentials having zero resonances.
\end{remark}

%remark
\begin{remark}Using a similar method as in \cite{Mizutani_JST} again based on the ground state representation, it might be also possible to obtain uniform Sobolev estimates for $\widetilde H_{\mathrm{crit},\sigma}$ if $2n/(n+2\sigma)<p\le 2(n+1)/(n+3)$ under the assumption that the uniform Sobolev estimates for $(-\Delta_{\R^{2\sigma}})^{\sigma}$ hold. However, except for the case $\sigma=1$, such estimates  for $(-\Delta_{\R^{2\sigma}})^{\sigma}$ seem to be not known yet. We do not pursue this topic for simplicity.
\end{remark}

\appendix

\section{Strichartz estimates for the free evolution}
\label{appendix_A}
Here we prove Lemmas \ref{lemma_section_3_1} and \ref{lemma_section_3_2}. For Lemma \ref{lemma_section_3_1}, we in fact prove the following more general result to include the operators $P_0(D)$ considered in Subsection \ref{subsection_5_1}.

%{lemma}
\begin{lemma}
\label{lemma_A_1}
Assume either that $H_0=(-\Delta)^\sigma$ with $\sigma>0$ and $\sigma\neq1/2$ or that $H_0=P_0(D)$ is given by \eqref{P_0_2} with $\sigma\in \N$ and $\sigma<n/2$. Then \eqref{lemma_section_3_1_1} and \eqref{lemma_section_3_1_2} are satisfied.
\end{lemma}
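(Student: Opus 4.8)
The plan is to prove \eqref{lemma_section_3_1_1} and \eqref{lemma_section_3_1_2} by freezing the frequency, reducing each dyadic block to the frequency-one Strichartz estimates, and summing the blocks with the Littlewood--Paley square function. Write $P_k:=\varphi_k(D)$ for the homogeneous dyadic projections. The starting point is a frequency-localized dispersive estimate: for all $k\in\Z$ and $t\neq0$,
\[
\norm{P_k e^{-itH_0}}_{L^1\to L^\infty}\lesssim 2^{kn}\bigl(1+2^{2k\sigma}|t|\bigr)^{-n/2},
\]
with $\sigma$ replaced by $j_0:=\min\{j: a_j>0\}$ on the low-frequency blocks $k\le 0$ when $H_0=\sum_j a_j(-\Delta)^j$ is as in \eqref{P_0_2}. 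For $H_0=(-\Delta)^\sigma$ this follows from the parabolic rescaling $x\mapsto 2^k x$, $t\mapsto 2^{2k\sigma}t$ together with the stationary phase bound $\bignorm{\int e^{i(y\cdot\eta-s|\eta|^{2\sigma})}\chi(\eta)\,d\eta}_{L^\infty_y}\lesssim(1+|s|)^{-n/2}$ for a bump function $\chi$ supported on an annulus, and this is precisely where $\sigma\neq 0,1/2$ is used: the Hessian of $|\eta|^{2\sigma}$ has eigenvalues $2\sigma|\eta|^{2\sigma-2}$ (tangential) and $2\sigma(2\sigma-1)|\eta|^{2\sigma-2}$ (radial), so it is nondegenerate on $\supp\chi$ exactly when $\sigma\notin\{0,1/2\}$. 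For the operator \eqref{P_0_2} I would split $|\xi|\gtrsim1$ from $|\xi|\lesssim1$, write $P_0(2^k\eta)=2^{2k\sigma}Q_k(\eta)$ on the high blocks and $2^{2kj_0}\widetilde{Q}_k(\eta)$ on the low ones, and use that the radial symbols $Q_k$ (resp. $\widetilde{Q}_k$) range over a compact family in $C^\infty$ of the annulus with uniformly nondegenerate Hessian, converging to $|\eta|^{2\sigma}$ (resp. $a_{j_0}|\eta|^{2j_0}$), so the same stationary phase bound holds uniformly in $k$.

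Interpolating with $L^2$-conservation gives $\norm{P_k e^{-itH_0}}_{L^{a'}\to L^a}\lesssim 2^{kn(1-2/a)}(1+2^{2k\sigma}|t|)^{-n(1/2-1/a)}$ for all $2\le a\le\infty$, and real interpolation upgrades this to $L^{r',2}_x\to L^{r,2}_x$ for $2<r<\infty$. Feeding these bounds into the Keel--Tao theorem --- in both its homogeneous and inhomogeneous forms, including the double endpoint $(p,q)=(\tilde p,\tilde q)=(2,2n/(n-2))$ and with the sharp Lorentz target --- yields, for the rescaled frequency-one problem, $\norm{e^{-itH_0}\psi_0}_{L^p_tL^{q,2}_x}\lesssim\norm{\psi_0}_{L^2}$ and $\norm{\Gamma_{H_0}F}_{L^p_tL^{q,2}_x}\lesssim\norm{F}_{L^{\tilde p'}_tL^{\tilde q',2}_x}$ for all $n/2$-admissible pairs $(p,q),(\tilde p,\tilde q)$. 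Undoing the rescaling and using the admissibility relation $n(1/2-1/q)=2/p$, one obtains
\[
\norm{P_k e^{-itH_0}\psi_0}_{L^p_tL^{q,2}_x}\lesssim 2^{kn(1/2-1/q)-2k\sigma/p}\norm{P_k\psi_0}_{L^2}=2^{2k(1-\sigma)/p}\norm{P_k\psi_0}_{L^2},
\]
which is cancelled exactly by the weight $|D|^{2(\sigma-1)/p}$ (acting as $\sim 2^{2k(\sigma-1)/p}$ on the range of $P_k$); the same bookkeeping, now carrying both weights $|D|^{2(\sigma-1)/p}$ and $|D|^{2(1-\sigma)/\tilde p}$, handles $\Gamma_{H_0}$. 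In the case \eqref{P_0_2} one has $\sigma\in\N$, so $2(\sigma-1)/p\ge0$, and the low-frequency blocks produce instead the harmless power $2^{2k(\sigma-j_0)/p}\le1$, since $j_0\le\sigma$.

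It remains to sum over $k$. By the Littlewood--Paley square function estimate in $L^{q,2}_x$ (valid for $1<q<\infty$) and Minkowski's inequality for $\ell^2_k$ inside $L^p_tL^{q,2}_x$ --- legitimate because $p,q\ge2$: the spatial step reduces to the triangle inequality in $L^{q/2,1}$ via the identity $\norm{g^{1/2}}_{L^{q,2}}=\norm{g}_{L^{q/2,1}}^{1/2}$, and the temporal step is the usual $p\ge2$ Minkowski inequality --- one gets
\[
\norm{|D|^{2(\sigma-1)/p}e^{-itH_0}\psi_0}_{L^p_tL^{q,2}_x}\lesssim\Bigl(\sum_{k\in\Z}\norm{P_k\psi_0}_{L^2}^2\Bigr)^{1/2}\lesssim\norm{\psi_0}_{L^2},
\]
which is \eqref{lemma_section_3_1_1}; \eqref{lemma_section_3_1_2} follows in the same way after applying the square function on both sides.

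The main obstacle is the endpoint analysis: the double-endpoint inhomogeneous estimate, and the sharp Lorentz target $L^{q,2}_x$, are not reachable by a plain $TT^*$ argument and require the bilinear real-interpolation argument of Keel--Tao, which must be carried out with constants depending only polynomially on the dispersive constant so that the $2^k$-bookkeeping above survives the dyadic summation; a secondary technical point is establishing the stationary phase bound uniformly in $k$ for the inhomogeneous elliptic symbols in \eqref{P_0_2}.
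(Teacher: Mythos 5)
Your proposal reproduces, in substance, the same overall route as the paper: Littlewood--Paley decomposition, a frequency-localized dispersive bound, Keel--Tao on each dyadic block after rescaling, and summation via the square function in Lorentz spaces (the paper's Step 1, Step 3, and your final summation paragraph are essentially identical, including the double-endpoint and the sharp Lorentz targets from \cite[Theorem~10.1]{KeTa}). Two technical choices differ, but only in how the dispersive input is produced and normalized. First, you rescale parabolically in both $x$ and $t$ to reduce each block to a frequency-one problem, whereas the paper rescales only time ($t\mapsto N_j t$ with $N_j = 2^{-2j(\sigma-1)}$) and keeps the spatial frequency dyadic; both normalizations yield the same factor $2^{2k(1-\sigma)/p}$, which is cancelled by $|D|^{2(\sigma-1)/p}$. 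Second, and more substantively, for the inhomogeneous operator \eqref{P_0_2} the paper deduces the uniform dispersive estimate \eqref{lemma_A_1_1} from the global convolution-kernel decay \eqref{lemma_A_1_2} taken from \cite{HHZ} (applied at order $|\alpha| = n(\sigma-1)$), together with a Bernstein-type bound; you instead run stationary phase block by block, splitting $k\ge 0$ (normalize by the top order $2\sigma$) from $k\le 0$ (normalize by $2j_0$), and verify that the rescaled symbols $Q_k$, $\widetilde Q_k$ form a compact $C^\infty$ family on the annulus with Hessians bounded below uniformly in $k$. Your derivation is more elementary and self-contained (no appeal to \cite{HHZ}), at the cost of having to check the uniform stationary-phase constants; the paper's is slicker but delegates the hard kernel analysis to a citation. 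The resulting bookkeeping is the same in both, with your low-frequency blocks contributing the harmless factor $2^{2k(\sigma-j_0)/p}\le1$, so the dyadic sum closes.
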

\vskip0.2cm
The proof of this lemma relies on the Keel--Tao theorem \cite{KeTa}, the Littlewood--Paley square function estimates for $\varphi_j(D)$ given in Subsection \ref{subsection_notation} and the following localized dispersive estimate (with the implicit constant independent of $t$ and $j$):
\begin{align}
\label{lemma_A_1_1}
\norm{e^{-itH_0}\Phi_j(D)}_{L^1\to L^\infty}\lesssim 2^{-jn(\sigma-1)}|t|^{-n/2},\quad t\neq0,\ j\in \Z,
\end{align}
where $\Phi_j(\xi)=\Phi(2^{-j}\xi)$ and $\Phi\in C_0^\infty(\R^n)$ is supported away from the origin.
\eqref{lemma_A_1_1} is easy to obtain if $H_0=(-\Delta)^\sigma$. For $H_0$ given by \eqref{P_0_2}, an essential ingredient for proving \eqref{lemma_A_1_1} is the following decay estimate for the convolution kernel $I(t,x)=\F^{-1}(e^{-itP_0})(x)$:
\begin{align}
|\partial_x^\alpha I(t,x)|
\label{lemma_A_1_2}
\lesssim
\begin{cases}
|t|^{-\frac{n+|\alpha|}{2\sigma}}\<|t|^{-\frac{1}{2\sigma}}x\>^{-\frac{n(\sigma-1)-|\alpha|}{2\sigma-1}}
&\text{if}\ |t|\lesssim 1,\ \text{or}\ |t|\gtrsim 1\ \text{and}\ |t|^{-1}|x|\gtrsim1,\\
|t|^{-\frac{n+|\alpha|}{2\delta }}\<|t|^{-\frac{1}{2\delta }}x\>^{-\frac{n(\delta -1)-|\alpha|}{2\delta -1}}
&\text{if}\ |t|\gtrsim 1\ \text{and}\ |t|^{-1}|x|\lesssim1,\end{cases}
\end{align}
where $|\alpha|\le n(\sigma-1)$, $\delta=\min\{j\ |\ a_j>0\}\le \sigma$. This bound follows from \cite[Theorem 3.1]{HHZ} by taking $m_2=\sigma,m_1=\delta$ in this theorem (see also \cite{CMY} and references therein).
\vskip0.3cm
%proof
\begin{proof}[\underline{Proof of Lemma \ref{lemma_A_1}}] The proof is divided into three steps.
\vskip0.2cm
{\it Step 1}.
We first prove that the following square function estimates for the Littlewood--Paley decomposition $\{\varphi_j(D)\}_{j\in \Z}$ hold:
\begin{align}
\label{LP_1}
\norm{f}_{L^{q,2}}&\lesssim \Big(\sum_{j\in \Z}\|\varphi_j(D)f\|_{L^{q,2}}^2\Big)^{1/2},\quad 2\le q<\infty,\\
\label{LP_2}
\norm{f}_{L^{q,2}}&\gtrsim \Big(\sum_{j\in \Z}\|\varphi_j(D)f\|_{L^{q,2}}^2\Big)^{1/2},\quad 1<q\le2.
\end{align}
These estimates can be easily obtained by the usual estimates and the real interpolation theorem. Indeed, if we set $Sf(j,x):=\varphi_j(D)f(x)$, then the usual square function estimate
$$
\norm{Sf}_{L^2_jL^q_x}\lesssim \norm{f}_{L^q_x}
$$
holds for $1<q\le2$. Then the real interpolation theorem (see Appendix \ref{appendix_misc}) implies \eqref{LP_2}. \eqref{LP_1} follows from \eqref{LP_2} and a duality argument. Since $e^{-itH_0}$ and $\Gamma_{H_0}$ commute with $\varphi_j(D)$, by virtue of these estimates \eqref{LP_1} and \eqref{LP_2}, we may assume without loss of generality that $\psi_0=\Phi_j(D)\psi_0,F=\Phi_j(D)F$ with some $\Phi\in C_0^\infty(\R^n)$ supported away from the origin so that $\Phi\equiv1$ on $\supp \varphi$.
\vskip0.2cm
{\it Step 2}. We next prove the dispersive estimate \eqref{lemma_A_1_1}. Suppose first $H_0=(-\Delta)^\sigma$ and $\sigma\neq1/2$. Since $|\mathrm{Hess}(|\xi|^{2\sigma})|\sim 1$ on $\supp\Phi$, the stationary phase theorem yields
$$
\|e^{-itH_0}\Phi(D)f\|_{L^\infty}\lesssim |t|^{-n/2}\|f\|_{L^1},\quad t\neq0,
$$
which implies \eqref{lemma_A_1_1} by scaling $f(x)\mapsto f(2^jx)$ and  the fact that $(-\Delta)^\sigma$ is homogeneous of order $2\sigma$. We next let $\sigma\in \N$, $|\alpha|=n(\sigma-1)$ and $H_0$ given by \eqref{P_0_2}. When $\delta=\sigma$, we have $$|\partial^\alpha I(t,x)|\lesssim |t|^{-n/2},\quad t\neq0,$$ since $(n+n(\sigma-1))/(2\sigma)=n/2$. When $\delta<\sigma$, we similarly have $|\partial^\alpha I(t,x)|\lesssim |t|^{-n/2}$ for the former case in \eqref{lemma_A_1_2}. For the latter case, since  $|x|\lesssim |t|$, $|\partial^\alpha I(t,x)|\lesssim |t|^{-\delta(n,\sigma,\delta)}$ where
$$
\delta(n,\sigma,\delta)=\frac{\sigma n}{2\delta }-\left(1-\frac{1}{2\delta }\right)\frac{n(\sigma-\delta )}{2\delta -1}=\frac{\sigma n-n\sigma+n\delta }{2\delta }=\frac n2.
$$
Therefore, we obtain
\begin{align}
\label{lemma_A_1_proof_1}
\norm{|D|^{n(\sigma-1)}e^{-itH_0}}_{L^1\to L^\infty}\lesssim |t|^{-n/2},\quad t\neq0.
\end{align}
which, together with the bound $\norm{|2^{-j}D|^{n(1-\sigma)}\Phi_j(D)}_{L^\infty\to L^\infty}\lesssim 1$, implies \eqref{lemma_A_1_1}.
\vskip0.2cm
{\it Step 3}. Now we recall Keel--Tao's theorem \cite[Theorem 10.1]{KeTa} which, in a special case, states that if a family of operators $\{U(t)\}_{t\in \R}\subset \mathbb B(L^2(\R^n))$ satisfies
\begin{itemize}
\item $\norm{U(t)}\lesssim 1$ uniformly in $t\in \R$;
\item $\norm{U(t)U(s)^*}_{L^1\to L^\infty}\lesssim |t-s|^{-\alpha}$ for $t\neq s$ with some $\alpha>0$,
\end{itemize}
then, for any $\alpha$-admissible pairs $(p,q)$ and $(\tilde p,\tilde q)$, one has
$$
\norm{U(t)\psi_0}_{L^p_tL^{q,2}_x}\lesssim \norm{\psi_0}_{L^2_x},\quad \bignorm{\int_0^tU(t)U(s)^*F(s)ds}_{L^p_tL^{q,2}_x}\lesssim \norm{F}_{L^{\tilde p'}_tL^{\tilde q',2}_x}.
$$
By Bernstein's inequality \eqref{Bernstein} and \eqref{lemma_A_1_1}, we have
\begin{align*}
\norm{\Phi_j(D)e^{-itH_0}\Phi_j(D)}_{L^1\to L^\infty}\lesssim 2^{-jn(\sigma-1)}|t|^{-n/2}.
\end{align*}
Putting $N_j=2^{-2j(\sigma-1)}$ and making the change of variable $t\mapsto N_jt$, we have
$$
\norm{\Phi_j(D)e^{-iN_j(t-s)H_0}\Phi_j(D)}_{L^1\to L^\infty}\lesssim |t-s|^{-n/2}.
$$
By the unitarity of $e^{-itN_jH_0}$ we also obtain $\norm{\Phi_j(D)e^{-iN_j t H_0}}\lesssim1$. Therefore, one can apply the above Keel--Tao theorem to $U(t)=\Phi_j(D)e^{-iN_j t H_0}$ obtaining
\begin{align*}
\norm{e^{-iN_j t H_0}\Phi_j(D)\psi_0}_{L^{p}_tL^{q,2}_x}&\lesssim \norm{\psi_0}_{L^2_x},\\
\bignorm{\Phi_j(D)\int_0^te^{-iN_j(t-s)H_0}\Phi_j(D)F(s)ds}_{L^{p}_tL^{q,2}_x}&\lesssim\norm{F}_{L^{\tilde p'}_tL^{\tilde q',2}_x}.
\end{align*}
By rescaling $t\mapsto N_j^{-1}t$ and $s\mapsto N_j^{-1}s$ and using \eqref{Bernstein}, we have \eqref{lemma_section_3_1_1} and \eqref{lemma_section_3_1_2} for $\psi_0,F$ replaced by $\Phi_j(D)\psi_0,\Phi_j(D)F$. Thanks to \eqref{LP_1} and \eqref{LP_2}, we obtain \eqref{lemma_section_3_1_1} and \eqref{lemma_section_3_1_2}.
\end{proof}

%proof
\begin{proof}[\underline{Proof of Lemma \ref{lemma_section_3_2}}]
Under the conditions in Lemma \ref{lemma_section_3_2}, it has been proved by \cite{GLNY} that
\begin{align}
\label{lemma_section_3_2_proof_1}
\norm{e^{-itH_0}\Phi(D)\psi_0}_{L^{p_1}_t\mathcal L^{q_1}_rL^2_\omega}&\lesssim \norm{\psi_0}_{L^2_x},\\
\label{lemma_section_3_2_proof_2}
\norm{\Gamma_{H_0}\Phi(D)F}_{L^{2}_t\mathcal L^{q_1}_rL^2_\omega}&\lesssim \|F\|_{L^{2}_t\mathcal L^{q_2'}_rL^2_\omega},
\end{align}
where $\Gamma_{H_0}F(t)=\int_0^t e^{-i(t-s)H_0}F(s)ds$. Since $\varphi_j(D)=\Phi_j(D)\varphi_j(D)$, the estimate \eqref{lemma_section_3_2_proof_1} and the same scaling argument as above then imply that
\begin{align*}
\norm{|D|^{s_1}e^{-itH_0}\varphi_j(D)\psi_0}_{L^{p_1}_t\mathcal L^{q_1}_rL^2_\omega}\lesssim 2^{-js_1}\norm{|D|^{s_1}\varphi_j(D)\psi_0}_{L^2_x}
\lesssim \norm{\varphi_j(D)\psi_0}_{L^2_x}.
\end{align*}
Since $p_1>2$, using Minkowski's inequality and this estimate, we have
\begin{align*}
\||D|^{s_1}e^{-itH_0}\psi_0\|_{L^{p_1}_tB[\mathcal L^{q_1}_rL^2_\omega]}^2
%&\lesssim \sum_{j\in \Z}\||D|^{s_1}e^{-itH_0}\varphi_j(D)\psi_0\|_{L^{p_1}_t\mathcal L^{q_1}_rL^2_\omega}^2\\
\lesssim  \sum_{j\in \Z}\|\varphi_j(D)\psi_0\|_{L^2_x}^2
\lesssim \|\psi_0\|_{L^2_x}^2
\end{align*}
and \eqref{lemma_section_3_2_1} follows. Next, since $\norm{f}_{\mathcal L^q_rL^2_\omega}\sim \norm{f}_{L^q_x}$ under the radial symmetry, we see that \eqref{lemma_section_3_2_proof_1}, \eqref{lemma_section_3_2_proof_2} and the real interpolation theory (see Appendix \ref{appendix_misc}) imply
$$
\norm{e^{-itH_0}\Phi(D)\psi_0}_{L^2_tL^{q_1,2}_x}\lesssim \norm{\psi_0}_{L^2_x},\quad \norm{\Gamma_{H_0}\Phi(D)F}_{L^{2}_tL^{q_1,2}_x}\lesssim \norm{F}_{L^{2}_tL^{q_2',2}_x}
$$
for radially symmetric data $\psi_0,F$. The same scaling argument as above then yields
\begin{align*}
\norm{|D|^{s_1}e^{-itH_0}\varphi_j(D)\psi_0}_{L^2_tL^{q_1,2}_x}&\lesssim \norm{\varphi_j(D)\psi_0}_{L^2_x},\\
\norm{|D|^{s(2,q_1)}\Gamma_{H_0}\varphi_j(D)F}_{L^{2}_tL^{q_1,2}_x}&\lesssim \norm{\varphi_j(D)|D|^{-s(2,q_2)}F}_{L^{2}_tL^{q_2',2}_x},
\end{align*}
which, together with \eqref{LP_1} and \eqref{LP_2}, implies the desired estimate \eqref{lemma_section_3_2_2}.
\end{proof}
\vskip0.3cm

\section{Proof of Example \ref{example_1}}
\label{appendix_example}
Let $H_0=P_0(D)$ be given by $P_0(\xi)=\sum_{j=1}^Ja_j|\xi|^{2\sigma_j}$, where $J\in \N$, $0<\sigma_1<\sigma_2<...<\sigma_J=\sigma<n/2$, $a_j\ge0$ and $a_J=1$.
Recall that, in such a case, $H_\ell$ are given by
$$
H_\ell=(2\sigma)^{\ell}(-\Delta)^\sigma+\sum_{j=1}^{J-1}(2\sigma_j)^{\ell}a_j(-\Delta)^{\sigma_j},\quad \ell=0,1,2.
$$
Here we show that the conditions in Example \ref{example_1} implies Assumption \ref{assumption_A} associated with these $H_\ell$. Firstly, \eqref{example_1_1} is just a paraphrase of \eqref{assumption_A_1}. Secondly, we use \eqref{example_1_2} and the condition $a_1,...,a_{m-1}\ge0$ to obtain \eqref{assumption_A_2}, namely
\begin{align}
\label{example_1_4}
\<(H_1+V_1)u,u\>\ge \<(2\sigma(-\Delta)^\sigma+V_1)u,u\>\gtrsim\<(-\Delta)^\sigma u,u\>.
\end{align}
Finally, writing
$$
H_2+V_2=2\sigma\Big(2\sigma (-\Delta)^\sigma+\sum_{j=1}^{J-1}\frac{\sigma_j}{\sigma}2\sigma_ja_j(-\Delta)^{\sigma_j} +V_1\Big)-2\sigma V_1+V_2
$$
and using the fact $\sigma_ja_j\le \sigma a_j$, we have
$$|\<(H_2+V_2)u,u\>|\le \<(2\sigma(H_1+V_1)u,u\>+|\<(2\sigma V_1-V_2)u,u\>|.$$
This bound, together with \eqref{example_1_3} and the first inequality in \eqref{example_1_4}, implies \eqref{assumption_A_3}.

\section{Some supplementary materials from Harmonic Analysis}
\label{appendix_misc}
Here we record several materials from Harmonic Analysis used frequently in the paper. We refer to textbooks \cite{Gra} and \cite{BeLo} for details.
\vskip0.2cm
(i) \underline{{\it Real interpolation space and theorem}}. Let $(X_1,X_2)$ be a Banach couple, {\it i.e.},  $X_1,X_2$ are two Banach spaces continuously embedded into a Hausdorff topological vector space. For $0<\theta<1$ and $1\le q\le \infty$, the real interpolation space $X_{\theta,q}=(X_1,X_2)_{\theta,q}$ is a Banach space satisfying $X_0\cap X_1\subset X_{\theta,q}\subset X_0+X_1$, $X_{\theta,q}=X_0$ if $X_0=X_1$, $X_{\theta,q}=X_{1-\theta,q}$ and
\begin{align}
\label{embedding}
X_{\theta,1}\hookrightarrow X_{\theta,q_1}\hookrightarrow X_{\theta,q_2}\hookrightarrow X_{\theta,\infty},\quad 1<q_1\le q_2<\infty.
\end{align}
Let $(X_0,X_1)$ and $(Y_0,Y_1)$ be two Banach couples and $T$ be a bounded linear operator from $(X_0,X_1)$ to $(Y_0,Y_1)$ in the sense that $T:X_j\to Y_j$ and $\norm{T}_{X_j\to Y_j}\le M_j$ for $j=0,1$. Then $T$ extends to a bounded operator from $X_{\theta,q}$ to $Y_{\theta,q}$ satisfying $\norm{T}_{X_{\theta,q}\to Y_{\theta,q}}\le M_0^{1-\theta}M_1^\theta$.
\vskip0.2cm
(ii) \underline{{\it Lorentz space}}. Let $(M,d\mu)$ be a $\sigma$-finite measure space. The Lorentz space $L^{p,q}=L^{p,q}(M,d\mu)$ is realized as a real interpolation between Lebesgue spaces, namely $L^{p_\theta,q}=(L^{p_0},L^{p_1})_{\theta,q}$ where $1\le p_0<p_1\le \infty$, $1\le q\le \infty$, $1/p_\theta=(1-\theta)/p_0+\theta/p_1$ and $0<\theta<1$. By \eqref{embedding}, we have the following continuous embeddings:
$$
L^{p,1}\hookrightarrow L^{p,q_1}\hookrightarrow L^{p,p}=L^p\hookrightarrow L^{p,q_2}\hookrightarrow L^{p,\infty},\quad 1\le q_1\le p\le q_2\le \infty.
$$ Moreover, for $1<p,q<\infty$, we have $(L^{p,q})'=L^{p',q'}$ and
$
\norm{f}_{L^{p,q}}\sim \sup_{\norm{g}_{L^{p',q'}}=1}\left|\int fgdx\right|
$.
%provided that $(M,d\mu)$ has no atom, which is particularly satisfied for the case when $(M,d\mu)=(\R^n,dx),(\R^+,r^{n-1}dr)$ or $(\mathbb S^{n-1},d\omega)$.
Finally, the following O'Neil inequality (H\"older's inequality for Lorentz norms) holds:
\begin{align}
\label{Holder}
\norm{fg}_{L^{p,q}}\lesssim \norm{f}_{L^{p_1,q_1}}\norm{g}_{L^{p_2,q_2}},\quad
\norm{fg}_{L^{p,q}}\lesssim \norm{f}_{L^{\infty}}\norm{g}_{L^{p,q}}
\end{align}
where $1\le p,p_1,p_2<\infty$, $1\le q,q_1,q_2\le \infty$, $1/p_1+1/p_2=1/p$ and $1/q_1+1/q_2=1/q$. \vskip0.2cm
(iii) \underline{{\it Bochner space}}. Given a Banach space $X$ and $1\le p\le \infty$, the Bochner space $L^pX=L^p(M,d\mu;X)$ is defined by the norm $\|f\|_{L^pX}=\|\|f\|_X\|_{L^p}$. For any Banach couple $(X_0,X_1)$, $0<\theta<1$, $1<p_0\le p_1<\infty$, the real interpolation space between $L^{p_0}X_0$ and $L^{p_1}X_1$ with the second exponent $q=p_\theta$ is given by $(L^{p_0}X_0,L^{p_1}X_1)_{\theta,p_\theta}=L^{p_\theta}X_{\theta,p_\theta}$. In particular, $(L^2_tL^{q_0}_x,L^2_tL^{q_1}_x)_{\theta,2}=L^2_tL^{q_\theta,2}_x$ for $1<q_0<q_1<\infty$ and $0<\theta<1$.
Note that $(L^{p_0}X_0,L^{p_1}X_1)_{\theta,q}$ is not necessarily equal to $L^{p_\theta}X_{\theta,q}$ if $q\neq p_\theta$.
\vskip0.2cm
(iv) \underline{{\it Sobolev's inequality}}. If $1<p<q<\infty$, $1<s<n$ and $1/p-1/q=s/n$, then
\begin{align}
\label{Sobolev}
\norm{f}_{L^{q,2}(\R^n)}\lesssim \norm{|D|^sf}_{L^{p,2}(\R^n)}.
\end{align}
This inequality follows from the Hardy-Littlewood-Sobolev inequality $|D|^{-s}:L^p\to L^q$ and the real interpolation theorem.
\vskip0.2cm
(v)\underline{ {\it Bernstein's inequality}}. Let $\varphi\in C_0^\infty(\R^n)$ be supported away from the origin. Then, for all $1\le p\le q\le \infty$, $\varphi_j(D)=\varphi(2^{-j}D)$ satisfies
\begin{align}
\label{Bernstein}
\norm{\varphi_j(D)}_{L^{p,2}\to L^{q,2}}\lesssim 2^{-jn(1/q-1/p)},\quad j\in \Z,
\end{align}
with $L^{r,2}$ replaced by $L^r$ if $r=1,\infty$. Since $\varphi(D)f=\check{\varphi}*f$, the special case $j=0$ follows from Young's convolution inequality  and real interpolation theorem.
%Since $(\F^{-1} \varphi)(x-y)\in L^\infty_xL^1_y\cap L^\infty_yL^1_x$, \eqref{Bernstein} for the special case $j=0$ follow by applying Schur's test and real interpolation theorem.
By virtue of the scaling $f(x)\mapsto f(2^jx)$, the general cases also follow from the case $j=0$.

\vskip0.2cm
(vi)\underline{ {\it Christ--Kiselev's lemma}}. Let $-\infty\le a<b\le \infty$, $\X,\Y$ be Banach spaces of functions on $\R^n$ so that $\X\cap L^2$ is dense in $\X$, and $\{K(t,s)\}_{t,s\in (a,b)}\subset \mathbb B(L^2)$ be  such that $K:L^2\to C((a,b)^2;L^2)$. Define an integral operator $T$ with the operator valued kernel $K$ by
$$
TF(t)=\int_a^bK(t,s)F(s)ds.
$$
Assume $TF(t)\in \Y$ for a.e. $t\in (a,b)$ and there exist $1\le p<q\le\infty$ and $C>0$ such that
\begin{align}
\label{C_5}
\|TF\|_{L^q((a,b);\Y)}\le C\|F\|_{L^p((a,b);\X)}
\end{align}
for any simple function $F:(a,b)\to L^2\cap \X$. Then the operator
$$
\tilde Tf(t)=\int_a^tK(t,s)F(s)ds
$$
satisfies the following estimate  for the same $p,q$:
$$
\|\tilde TF\|_{L^q((a,b);\Y)}\le \tilde C\|F\|_{L^p((a,b);\X)},
$$
where $\tilde C=C2^{1-2(1/p-1/q)}(1-2^{-(1/p-1/q)})^{-1}$. Note that the condition $p<q$ is necessary since $\tilde C\to \infty$ as $p\to q$. This is a minor modification of \cite[Lemma 3.1]{SmSo} (see also the original paper \cite{ChKi}) where  the condition $K\in C(\R^2;\mathbb B(\X,\Y))$ was assumed to define $T,\tilde T$ on $C_t\X\cap L^1_t\X$. In the present setting, the above assumption is sufficient to define  $T,\tilde T$ on $C_t(\X\cap L^2)\cap L^1_t(\X\cap L^2)$ and the  same proof as that of \cite[Lemma 3.1]{SmSo} works well to obtain the above statement. Such a modification is useful when one considers the case with $K(t,s)=e^{-i(t-s)H}$ to prove inhomogeneous Strichartz estimates for $\tilde T=\Gamma_H$ by using the corresponding homogeneous Strichartz estimates for $e^{-itH}$, since $e^{-i(t-s)H}:L^2_x\to C(\R^2;L^2_x)$ for any self-adjoint operator $H$ on $L^2$, while it is not always true that $e^{-itH}: \X\to \Y$ for each $t$ unless $\X=\Y=L^2$. Moreover, the condition that $TF(t)\in \Y$ for a.e. $t$ follows from the corresponding homogeneous Strichartz estimates for $e^{-itH}$. %Indeed, if $e^{-itH}:L^2\to L^p(\R;\Y)$, then for any simple function $F(t,x)=\sum_{j=1}^Jf_j(x)\mathds1_{E_j}(t)$ with $f_j\in L^2\cap \X$, we have \begin{align*} \|TF(t)\|_\Y&\le \sum_{j=1}^N\int_{[a,b]\cap E_j}\|e^{-i(t-s)H}f_j\|_\Y ds\\&\le C \int_{a-t}^{b-t}\|e^{-isH}f_j\|_\Y ds\\&\le C(t,a,b)\|f\|_{L^2},\end{align*}where $C(t,a,b)$ is a polynomial in $t,a,b$.

\section*{Acknowledgments}
H. Mizutani is partially supported by JSPS KAKENHI Grant-in-Aid for Young Scientists (B) \#JP17K14218 and Grant-in-Aid for Scientific Research (B) \#JP17H02854. X. Yao is partially supported by NSFC grants No.11771165 and 12171182. H. Mizutani would like to express his thanks to Professors Avy soffer and Xiaohua Yao for their invitation to visit Wuhan and their kind hospitality at CCNU. Finally, the authors would like to thank the anonymous referees for careful reading the manuscript and providing valuable suggestions, which substantially helped improving the quality of the paper.

%%%%%%%%%% Bibliography %%%%%%%%%%%%%%%%%%%

\end{document}